\newcommand{\commHL}[1]{{\textcolor{blue}{#1}}}
\DeclareMathOperator*{\argmin}{arg\,min}
\def\R{\mathbb{R}}
\def\E{\mathbb{E}}
\newlength{\dhatheight}
\newenvironment{proof}{\paragraph{\it Proof.}}{\hfill$\square$}
\theoremstyle{plain}
\newtheorem{theorem}{Theorem}[section]
\newtheorem{proposition}[theorem]{Proposition}
\newtheorem{lemma}[theorem]{Lemma}
\newtheorem{definition}[theorem]{Definition}
\newtheorem{assumption}{Assumption}
\newtheorem{conjecture}{Conjecture}
\newcommand{\supp}{\mathrm{supp}}
\newcommand{\mA}{\mathcal{A}}	
\newcommand{\mO}{\mathcal{O}}	
\newcommand{\mY}{\mathcal{Y}}	
\newcommand{\mX}{\mathcal{X}}	
\newcommand{\mF}{\mathcal{F}}	
\newcommand{\mL}{\mathcal{L}}	
\newcommand{\indic}[1]{\mathds{1}_{\left\{ #1 \right\}}} 
\newcommand{\PP}{\mathds{P}}    
\newcommand{\vdot}[2]{\langle #1, #2 \rangle}
\newcommand{\floor}[1]{\lfloor #1 \rfloor}
\newcommand{\Proj}[2]{\Pi_{ #1 } \left\{ #2 \right\}}
\newcommand{\Var}{\normalfont\texttt{Var}}
\newcommand{\Exs}{\mathbb{E}}
\newcommand{\prog}{\normalfont\texttt{prog}}
\newcommand{\Ber}{\texttt{Ber}}
\newcommand{\tssum}{\textstyle \sum}
\newcommand{\grad}{\nabla}
\newif\ifdraft
\newif\ifarxiv
\newcommand{\blue}[1]{\textcolor{blue}{#1}}
\newcommand{\dkcomment}[1]{\ifdraft {\bf{{\blue{{Dohyun --- #1}}}}}\else\fi}
\newcommand{\I}{\mathcal{I}}
\newcommand{\W}{\mathcal{W}}
\title{\bf{\LARGE{On the Complexity of First-Order Methods in Stochastic Bilevel Optimization}}}
\author[1]{Jeongyeol Kwon}
\author[2]{Dohyun Kwon}
\author[1]{Hanbaek Lyu}
\affil[1]{University of Wisconsin-Madison}
\affil[2]{University of Seoul}
\begin{document}
\maketitle

\begin{abstract}
    We consider the problem of finding stationary points in Bilevel optimization when the lower-level problem is unconstrained and strongly convex. The problem has been extensively studied in recent years; the main technical challenge is to keep track of lower-level solutions $y^*(x)$ in response to the changes in the upper-level variables $x$. Subsequently, all existing approaches tie their analyses to a genie algorithm that knows lower-level solutions and, therefore, need not query any points far from them. We consider a dual question to such approaches: suppose we have an oracle, which we call $y^*$-aware, that returns an $O(\epsilon)$-estimate of the lower-level solution, in addition to first-order gradient estimators {\it locally unbiased} within the $\Theta(\epsilon)$-ball around $y^*(x)$. We study the complexity of finding stationary points with such an $y^*$-aware oracle: we propose a simple first-order method that converges to an $\epsilon$ stationary point using $O(\epsilon^{-6}), O(\epsilon^{-4})$ access to first-order $y^*$-aware oracles. Our upper bounds also apply to standard unbiased first-order oracles, improving the best-known complexity of first-order methods by $O(\epsilon)$ with minimal assumptions. We then provide the matching $\Omega(\epsilon^{-6})$, $\Omega(\epsilon^{-4})$ lower bounds without and with an additional smoothness assumption on $y^*$-aware oracles, respectively. Our results imply that any approach that simulates an algorithm with an $y^*$-aware oracle must suffer the same lower bounds.

\end{abstract}

\section{Introduction}
\label{section:intro}

Bilevel optimization \cite{colson2007overview} is a fundamental optimization problem that abstracts the core of various critical applications characterized by two-level hierarchical structures, including meta-learning \cite{rajeswaran2019meta}, hyper-parameter optimization \cite{franceschi2018bilevel, bao2021stability}, model selection \cite{kunapuli2008bilevel, giovannelli2021bilevel}, adversarial networks \cite{goodfellow2020generative, gidel2018variational}, game theory \cite{stackelberg1952theory} and reinforcement learning \cite{konda1999actor, sutton2018reinforcement}. In essence, Bilevel optimization can be abstractly described as the subsequent minimization problem:
\begin{align}
    &\min_{x \in \mathbb{R}^{d_x}} \quad F(x) := f(x,y^*(x)) \nonumber \\
    &\text{s.t.} \quad y^*(x) \in \arg \min_{y \in \mathbb{R}^{d_y}} g(x,y), \label{problem:bilevel} \tag{\textbf{P}}
\end{align}
where $f,g: \mathbb{R}^{d_x} \times \mathbb{R}^{d_y} \rightarrow \mathbb{R}$ are continuously-differentiable functions. The \textit{hyperobjective} $F(x)$ depends on $x$ both directly and indirectly via $y^*(x)$, which is a solution for the lower-level problem of minimizing another function $g$, which is parametrized by $x$. Throughout the paper, we assume that the lower-level problem is strongly-convex, {\it i.e.,} $g(\bar{x},y)$ is strongly convex in $y$ for all $\bar{x} \in \mathbb{R}^{d_x}$. 

Our goal is to find an \textit{$\epsilon$-stationary point} of \eqref{problem:bilevel}: an $x$ that satisfies $\|\grad F(x) \| \le \epsilon$. Here the explicit expression of $\grad F(x)$ can be derived from the implicit function theorem \cite{krantz2002implicit}:
\ifarxiv
\begin{align}
    \grad F(x) := \grad f(x,y^*(x)) - \grad_{xy}^2 g(x,y^*(x)) \left(\grad_{yy}^2 g(x,y^*(x))\right)^{-1} \grad_y f(x,y^*(x)).
    \label{eq:explicit_grad_F}
\end{align}
\else
\begin{align} \label{eq:explicit_grad_F}
    \grad F(x) &= \grad_x f (x, y^*(x)) 
      \\
    &\hspace{-0.8cm} - \grad_{xy}^2 g(x, y^*(x))   \grad_{yy}^2 g(x, y^*(x))^{-1} \grad_y f(x, y^*(x)). \nonumber
\end{align}
\fi
Following the standard black-box optimization model \cite{nemirovskij1983problem}, we consider the first-order algorithm class that accesses functions through \textit{first-order oracles} that return estimators of first-order derivatives $\hat{\nabla} f(x,y; \zeta), \hat{\nabla} g(x,y; \xi)$ for a given query point $(x,y)$ such that the following holds:
\ifarxiv
\begin{align}
    &\hspace{-0.2cm}\Exs[\hat{\nabla} f(x,y; \zeta) ] = \grad f(x,y),\ \Exs[\hat{\grad} g(x,y;\xi)] = \grad g(x,y), \label{eq:unbiased_gradients} \\
    &\hspace{-0.2cm}\Exs[\| \hat{\nabla} f(x,y; \zeta) - \Exs[\nabla f(x, y; \zeta)] \|^2] \le \sigma_f^2, \nonumber \\
    &\hspace{-0.2cm}\Exs[\|\hat{\nabla} g(x,y; \xi) - \Exs[\nabla g(x, y; \xi)]\|^2] \le \sigma_g^2, \label{eq:bounded_variance_gradients}  \\
  & \hspace{-0.2cm} \Exs[\| \hat{\nabla} g(x,y^1; \xi) - \hat{\nabla} g(x, y^2; \xi)\|^2] \le \tilde{l}_{g,1} \|y^1 - y^2\|^2, \label{eq:mean_squared_lipschitz} 
\end{align}  
\else
\begin{align}
    &\hspace{-0.2cm}\Exs[\hat{\nabla} f(x,y; \zeta) ] = \grad f(x,y), \nonumber \\
    &\hspace{-0.2cm} \Exs[\hat{\grad} g(x,y;\xi)] = \grad g(x,y), \label{eq:unbiased_gradients} \\
    &\hspace{-0.2cm}\Exs[\| \hat{\nabla} f(x,y; \zeta) - \Exs[\nabla f(x, y; \zeta)] \|^2] \le \sigma_f^2, \nonumber \\
    &\hspace{-0.2cm}\Exs[\|\hat{\nabla} g(x,y; \xi) - \Exs[\nabla g(x, y; \xi)]\|^2] \le \sigma_g^2, \label{eq:bounded_variance_gradients}  \\
  & \hspace{-0.2cm} \Exs[\| \hat{\nabla} g(x,y^1; \xi) - \hat{\nabla} g(x, y^2; \xi)\|^2] \le \tilde{l}_{g,1} \|y^1 - y^2\|^2, \label{eq:mean_squared_lipschitz} 
\end{align}  
\fi
where $\sigma_f^2, \sigma_g^2 >0$ are the variance of gradient estimators, $\tilde{l}_{g,1}\in (0,\infty]$ is the stochastic smoothness parameter, and $\zeta, \xi$ are independently sampled random variables. The complexity of an algorithm is measured by the worst-case expected number of calls for the first-order oracles until an $\epsilon$-stationary point of \eqref{problem:bilevel} is found.

\paragraph{$y^*(x)$-Aware Oracles.} As we can see in the expression of hypergradients \eqref{eq:explicit_grad_F}, two main challenges distinguish (stochastic) Bilevel optimization from the more standard single-level optimization: at every $k^{th}$ iteration at a query point $x^k$, we need to estimate \textbf{(i)} $y^*(x^k)$, and then estimate \textbf{(ii)} $\grad F(x^k)$, which involves the estimation of Hessian-inverse using $y^k$. The vast volume of literature has been dedicated to resolving these two issues, starting from double-loop implementations which wait until $y^k$ to be sufficiently close to $y^*(x^k)$ \cite{ghadimi2018approximation}, to recent fully single-loop approaches that updates all variables within $O(1)$-oracle access with incremental improvement of $y^*(x^k)$ estimators \cite{dagreou2022framework, yang2023achieving}. Notably, all existing approaches require reasonable estimators of $y^{*}(x)$ for designing an algorithm for Bilevel optimization.

From a practical perspective, while it is standard in Bilevel literature to assume that the inner objective $g$ is strongly convex everywhere, many problems are globally nonconvex, and only \textit{locally strongly convex near $y^{*}(x)$}. In such cases, a common practice is to obtain a good initialization of $y^*(x)$ via more computationally expensive methods before running faster iterative algorithms to obtain more accurate estimates of lower-level solutions \cite{jain2010guaranteed, kwon2020algorithm, han2022optimal}. 


From these motivations, we formulate 
the following mathematical question: if we can directly obtain a sufficiently good estimator $\hat{y}(x)$ of $y^*(x)$ without additional complexity ({\it e.g.,} an oracle additionally provides an $\epsilon$-accurate estimate of $y^*(x)$ for a given $x$), and if we can only query gradients at points $(x,y)$ near $(x,y^{*}(x))$, what is the fundamental complexity of Bilevel problems? 
Formally, we consider the following oracle model which we refer to \textit{$y^*$-aware} oracle:

\vspace{0.1cm}
\begin{definition}[$y^*$-Aware Oracle]\label{def:ystart_aware}
     An oracle $\mathrm{O}(\cdot)$ is \textit{$y^*$-aware}, if there exists $r \in (0,\infty]$ such that for every query point $(x,y)$, the following conditions hold. \textup{\textbf{(i)}} in addition to stochastic gradients, the oracle also returns $\hat{y}(x)$ such that $\|\hat{y} (x) - y^*(x)\| \le r / 2$; \textup{\textbf{(ii)}} Gradient estimators satisfy \eqref{eq:unbiased_gradients}, \eqref{eq:bounded_variance_gradients} and \eqref{eq:mean_squared_lipschitz} only if $\|y - y^*(x)\| \le r$; otherwise, the returned gradient estimators can be arbitrary. 
\end{definition}
Note that, if we take $r=\infty$ in the above definition, then we recover the usual first-order stochastic gradient oracle that can be queried at any $(x,y)$ with the additional a priori estimator $\hat{y}(x)$ being uninformative. Thus, our oracle model subsumes the models conventionally assumed.

Conceptually, the complexity of any algorithm paired with an $y^*$-aware oracle can be considered as the lower limit of the problem, unless we can extract significant information from an arbitrary point $(x,y)$ where $y$ is far away from $y^*$. However, existing approaches view the information obtained at $y$ as meaningful only for the purpose of reaching $y^*(x)$, otherwise containing non-informative biases of size $O(\|y^*-y\|)$. For such approaches, one would expect faster convergence if sufficiently accurate estimates of $y^*(x)$ are provided for free. In this paper, we study lower bounds with such $y^*(x)$-aware oracles, providing a partial answer to the fundamental limits of the problem.

\paragraph{Prior Art.} Recent years have witnessed a rapid development of a body of work studying non-asymptotic convergence rates of iterative algorithms to $\epsilon$-stationary points of \eqref{problem:bilevel} under various assumptions on the stochastic oracles (see Section \ref{subsec:related_work} for the detailed overview). A major portion of existing literature assumes access to second-order information of $g$ via Jacobian/Hessian-vector product oracles (which we call  \textit{second-order oracles}) as the stationarity measure 
$\|\grad F(x)\|$ naturally requires computation of these quantities; see \eqref{eq:explicit_grad_F}. The best-known complexity results with second-order oracles give an $O(\epsilon^{-4})$ upper bound \cite{ji2021bilevel, chen2021closing}, and it can be improved to $O(\epsilon^{-3})$ with variance-reduction when the oracles have additional stochastic smoothness assumptions \cite{khanduri2021near, dagreou2022framework}.

A few recent works have shown that $\epsilon$-stationarity can also be achieved only with first-order oracles \cite{kwon2023fully, chen2023near, chen2023bilevel, lu2023first, yang2023achieving}. With stochastic noises, \cite{kwon2023fully} proposes an algorithm that finds $\epsilon$-stationary point within $O(\epsilon^{-7})$ access to first-order oracles, and $O(\epsilon^{-5})$ when the gradient estimators are (mean-squared) Lipschitz in expectation. Very recently, \cite{yang2023achieving} has shown that an $O(\epsilon^{-3})$ upper-bound is possible only with first-order oracles if we further have an additional {\it second-order} stochastic smoothness This result matches the best-known rate achieved by second-order baselines under the same condition. We close the remaining gap between first-order methods and second-order methods when we have fewer assumptions on stochastic oracles, {\it i.e.,} with the standard unbiased, variance-bounded, and possibly (mean-squared) gradient-Lipschitz oracles.

\subsection{Overview of Main Results}
We first provide high-level ideas on the expected convergence rates of first-order methods. To begin with, suppose for every $x$, we can directly access $y^*(x)$ and estimators of Jacobian/Hessian of $g$ without any cost. Then the problem becomes equivalent to finding a stationary point of $F(x)$ with (unbiased) estimators of $\grad F(x)$. After this reduction to single-level optimization, from the rich literature of nonconvex stochastic optimization \cite{arjevani2023lower}, the best achievable complexities are given as $\Theta(\epsilon^{-4})$ and $\Theta(\epsilon^{-3})$ without and with stochastic smoothness, respectively.

When we only have first-order oracles, the 
complexity can be similarly inferred from the previous bounds and simulating second-order oracles using first-order oracles. Indeed, observe that first-order oracles can simulate Jacobian/Hessian-vector product oracles (with a vector $v$) through finite differentiation with a precision parameter $\delta > 0$. Namely, $\Exs \left[\hat{\grad}_{xy}^2 g(x, y^*; \xi)^\top v \right]$ can be estimated as
\begin{align*}
     \Exs \left[\frac{\hat{\grad}_x g(x, y^*+\delta v; \xi) - \hat{\grad}_x g(x, y^*; \xi)}{\delta}\right] + O(\delta \|v\|^2). 
\end{align*}
However, without further assumptions, estimators of Jacobian-vector products obtained via the finite differentiation have $O(\delta)$-bias and $O(\delta^{-2})$-amplified variance. Thus, we can use $\delta$ at most $O(\epsilon)$ to keep the bias less than $\epsilon$ (hence the ``oracle-reliability radius'' should also satisfy $r = \Omega(\epsilon)$), and require $\Omega(\epsilon^{-2})$ times more oracle access to cancel out the variance amplification to approximately simulate the second-order methods, resulting in total $O(\epsilon^{-6})$ iterations. 


When we have stochastic smoothness, {\it i.e.,} gradient estimators are (mean-squared) Lipschitz as in \eqref{eq:mean_squared_lipschitz} (with $\tilde{l}_{g,1}<\infty$), then variances of finite-differentiation estimators are still bounded by $O(1)$, and we can obtain the $O(\epsilon^{-4})$ upper bound as if we can access second-order oracles.



\vspace{-0.2cm}
\paragraph{Upper Bound.} Given the above discussion, we derive upper bounds of $O(\epsilon^{-6})$, $O(\epsilon^{-4})$, without ($\tilde{l}_{g,1}=\infty$) and with ($\tilde{l}_{g,1}<\infty$ in \eqref{eq:mean_squared_lipschitz}) stochastic smoothness, respectively, with the $y^*$-aware oracle as long as $r = \Omega(\epsilon)$. In particular, with $r = \infty$, our results improve the best-known upper bounds given in \cite{kwon2023fully} by the order of $O(\epsilon)$ with minimal assumptions on stochastic oracles. Furthermore, perhaps surprisingly, our result shows that first-order methods are not necessarily worse than second-order methods under nearly the same assumption (as the stochastic smoothness assumption allows us to simulate second-order oracles with no additional cost in $\epsilon$). 

\vspace{-0.2cm}
\paragraph{Lower Bound.} Next, we turn our focus to lower bounds for finding an $\epsilon$-stationary point. For the lower bound, we assume $y^{*}(x)$-aware oracles with sufficiently good accuracy $r=\Theta(\epsilon)$ and we do not pursue lower bounds for the $r \gg \epsilon$ case in this work (see Conjecture \ref{conjecture:lower_bound}).
We prove that any black-box algorithms with $y^*(x)$-aware oracles must suffer at least $\Omega(\epsilon^{-6}), \Omega(\epsilon^{-4})$ access to oracles without and with stochastic smoothness, respectively. As an implication, if an algorithm (or analysis), including ours, does not introduce a slowdown by projecting $y$-coordinates of all query points onto a $\Theta(\epsilon)$-ball around $y^*(x)$, then its iteration complexity cannot be less than the proposed lower bounds.


\subsection{Our Approach}
Here, we overview our approaches for deriving the claimed upper and lower bounds.

\subsubsection{Upper Bound: Penalty Method}
In proving the upper bounds, our starting point is to consider an alternative reformulation of \eqref{problem:bilevel} through the penalty method used in \cite{kwon2023fully}. Specifically, consider a function $\mL_{\lambda}$ with a penalty parameter $\lambda > 0$:
\begin{align*}
    \mL_\lambda(x,y) := f(x,y) + \lambda (g(x,y) - g (x, y^*(x))).
\end{align*}
In \cite{kwon2023fully}, it was shown  that  the hyperobjective can be approximated by the following surrogate
\begin{align}
    \mL_{\lambda}^*(x) := \arg \min_y \mL_{\lambda}(x,y) \label{eq:penalty_surrogate}
\end{align}
in the sense that $\|\grad F(x) - \grad \mL_{\lambda}^*(x)\| \le O(1/\lambda)$, 
where $\grad \mL_{\lambda}^*(x)$ is given by
\ifarxiv
\begin{align}
    \grad \mL_{\lambda}^*(x) &= \grad_x f(x,y_{\lambda}^*(x)) + \lambda (\grad_x g(x,y_{\lambda}^*(x)) - \grad_x g(x,y^*(x))), \label{eq:grad_penalty}
\end{align}
\else
\begin{align}
    \grad \mL_{\lambda}^*(x) &= \grad_x f(x,y_{\lambda}^*(x)) \ + \nonumber \\
    &\quad \lambda (\grad_x g(x,y_{\lambda}^*(x)) - \grad_x g(x,y^*(x))), \label{eq:grad_penalty}
\end{align}
\fi
with $y_{\lambda}^*(x) := \arg \min_y \left(\lambda^{-1} f(x,y) + g(x,y)\right)$. Therefore, we can instead find an $\epsilon$-stationary point of $\mL_{\lambda}^*(x)$, {\it e.g.,} by running a stochastic gradient descent (SGD) style method on $\mL_{\lambda}^*(x)$ with $\lambda = O(\epsilon^{-1})$. 

For now, suppose $y_{\lambda}^*(x), y^*(x)$ are immediately accessible once $x$ is given. Then, we can construct the unbiased estimator $\hat{\grad} \mL_{\lambda}^*(x)$ with first-order oracles:
\ifarxiv
\begin{align*}
    \hat{\grad} \mL_{\lambda}^*(x) &= \hat{\grad} f(x,y_{\lambda}^*(x); \zeta) + \lambda(\hat{\grad} g(x, y_{\lambda}^*(x); \xi^y) - \hat{\grad} g(x,y^*(x); \xi^z)).
\end{align*}
\else
\begin{align*}
    \hat{\grad} \mL_{\lambda}^*(x) &= \hat{\grad} f(x,y_{\lambda}^*(x); \zeta) \ + \\
    &\quad \lambda(\hat{\grad} g(x, y_{\lambda}^*(x); \xi^y) - \hat{\grad} g(x,y^*(x); \xi^z)).
\end{align*}
\fi

\vspace{-0.2cm}
However, with the above construction, the variance of $\hat{\nabla} \mL_{\lambda}^*(x)$ is amplified by 
$\lambda^{2}$ so we need $O(\lambda^2) = O(\epsilon^{-2})$ batch of samples at every iteration to cancel out the amplified variance (similarly to finite-differentation for simulating Jacobian-vector products). Together with the standard sample complexity of SGD (which is $O(\texttt{Var}(\hat{\grad} \mL_{\lambda}^*) \epsilon^{-4})$), in total $O(\epsilon^{-6})$ oracle access 
should be sufficient. 

When we have stochastic smoothness \eqref{eq:mean_squared_lipschitz}, and if the oracle allows two query points for the same randomness (see Section \ref{section:prelim}) we can keep the variance controlled by coupling $y_{\lambda}^*$ and $y^*$ with the same random variable $\xi^y = \xi^z = \xi$ as
\ifarxiv
\begin{align*}
    \Var &\left(\hat{\grad}_x g(x, y_{\lambda}^*(x);\xi) - \hat{\grad}_x g(x,y^*(x);\xi)\right) \le \tilde{l}_{g,1}^2 \|y_{\lambda}^*(x) - y^*(x)\|^2.
\end{align*}
\else
\begin{align*}
    \Var &\left(\hat{\grad}_x g(x, y_{\lambda}^*(x);\xi) - \hat{\grad}_x g(x,y^*(x);\xi)\right) \\
    &\qquad \le \tilde{l}_{g,1}^2 \|y_{\lambda}^*(x) - y^*(x)\|^2.
\end{align*}
\fi
\cite{kwon2023fully} has shown that $\|y_{\lambda}^*(x) - y^*(x)\|$ is always bounded by $O(1/\lambda)$, and thus, the variance of $\hat{\grad} \mL_{\lambda}^*(x)$ is bounded by 
$O(\sigma^2 + \tilde{l}_{g,1}^2)$. Hence, the $O(\epsilon^{-4})$ upper bound can be achieved by running {\it e.g.,} standard SGD with smooth stochastic oracles. 

The key challenge in obtaining upper bounds in both cases is to control the bias. As we cannot directly access $y_{\lambda}^*(x)$ and $y^*(x)$, we must compute estimators of them (say $y$ and $z$) by, {\it e.g.,} running additional gradient steps on $y,z$ in the inner loop before updating $x$. Then bias comes from the error $\|y-y_{\lambda}^*(x)\|$ and $\|z - y^*(x)\|$ after the inner-loop iterations, and thus, our analysis requires tight control of overall bias to keep the number of inner-loop iterations optimal.

\subsubsection{Lower Bound: Slower Progress}
Our lower bound builds on the construction of probabilistic zero-chains by Arjevani et al. \cite{arjevani2023lower} for (single-level) stochastic nonconvex optimization. The key idea of zero-chain is to create a function in a way that, when making a query to an oracle, it discloses only up to one new coordinate. 
With stochastic oracles, the required number of iterations can be amplified by constructing an oracle that exposes the next new coordinate only with a small probability $p \ll 1$. The result from \cite{arjevani2023lower} constructs such an oracle with $p=O(\epsilon^2)$, achieving the $\Omega(\epsilon^{-4})$ lower bound for single-level optimization.

In Bilevel optimization, obtaining the right lower-bound dependency on $\epsilon$ requires care, otherwise, we easily end up with vacuous lower bounds no better than the known lower bounds for single-level optimization. In the noiseless setting, recent work in \cite{chen2023near} has achieved $O(\epsilon^{-2})$ upper bound, which is optimal in $\epsilon$ (the known lower bound of $\Omega(\epsilon^{-2})$ in deterministic single-level optimization \cite{carmon2020lower} also applies here). This implies that objective functions of the hard-instance cannot have a zero-chain longer than $O(\epsilon^{-2})$. In turn, with variance-bounded stochastic oracles, the $\Omega(\epsilon^{-6})$ lower bound must result from the slowdown of progression in zero-chains by stochastic noises due to the smaller probability of showing the next coordinate.

Our key observation  is that we can set the probability $p$ of progression much smaller if the progress in $x$ comes indirectly from $g$. Specifically, we design $y^*(x)$ such that $y^*(x) = F(x)$ where $F(x)$ is the hard instance given in \cite{arjevani2023lower} (see \eqref{eq:chain_basic} for the explicit construction), and let $f,g$ such that
\begin{align}
    f(x,y) = y, \quad g(x,y) = (y - F(x))^2. \label{eq:bilevel_hard_instance}
\end{align}
With the oracle model in Definition \ref{def:ystart_aware} with $r=\Theta(\epsilon)$, the probability of progression can be set $p = O(\epsilon^{4})$ over the length $\Omega(\epsilon^{-2})$ zero-chain, and we obtain the desired $\Omega(\epsilon^{-6})$ lower bound for first-order methods with variance-bounded stochastic oracles. The $\Omega(\epsilon^{-4})$ lower bound with additional stochastic smoothness can be shown on the same construction with minor modifications on scaling parameters. 


\subsection{Related Work}
\label{subsec:related_work}
Due to the vast volume of stochastic optimization, we only review the most relevant lines of work. 

\vspace{-0.2cm}
\paragraph{Upper Bounds for Bilevel Optimization.} 
Bilevel optimization has a long history since its introduction in \cite{bracken1973mathematical}. Beyond classical studies on asymptotic landscapes and convergence rates \cite{white1993penalty, vicente1994descent, colson2007overview}, initiated by \cite{ghadimi2018approximation}, there has been a surge of interest in developing iterative optimization methods in large-scale problems for solving \eqref{problem:bilevel}. Most convergence analysis have been performed on the standard setting of unconstrained and strongly-convex lower-level optimization with various assumptions on the oracle access to gradients and Hessians \cite{ghadimi2018approximation, hong2020two, ji2021bilevel, chen2021closing, khanduri2021near, chen2022single, sow2022constrained, dagreou2022framework, ji2021bilevel, kwon2023fully, liu2021value, sow2022constrained, ye2022bome, yang2023achieving}. We mention that there are also a few recent works that study an extension to nonconvex and/or constrained lower-level problems with certain regularity assumptions on the landscape of $g$ around the lower-level solutions, similar to the {\it local} strong-convexity of the lower-level problems \cite{kwon2023penalty, chen2023bilevel, lu2023first, shen2023penalty, xiao2023alternating}. 

\vspace{-0.3cm}

\paragraph{Lower Bounds for Bilevel Optimization.} There are relatively few studies on lower bounds for the Bilevel optimization. While the lower bounds for the single-level stochastic optimization \cite{carmon2020lower, arjevani2023lower} also imply the lower bounds of Bilevel optimization, the lower complexity could be much higher than single-level optimization. To our best knowledge, only the work in \cite{ji2023lower, dagreou2023lower} has studied the fundamental limits of finding $\epsilon$-stationary points of \eqref{problem:bilevel}. However, \cite{ji2023lower} only considers the noiseless setting when the hyperobjective $F(x)$ is convex (note that this is different from assuming $f$ or $g$ is convex). The work in \cite{dagreou2023lower} only considers the case when objective functions are in the form of finite-sum and the second-order oracles are available, and they only provide a lower-bound of single-level finite-sum optimization \cite{zhou2019lower}. 

\vspace{-0.3cm}

\paragraph{Stochastic Nonconvex Optimization.} There exists a long and rich history of stochastic (single-level) optimization for smooth nonconvex functions. With unbiased first-order gradient oracles, it is well-known that vanilla stochastic gradient descent (SGD) converges to a $\epsilon$-stationary point with at most $O(\epsilon^{-4})$ oracle access \cite{ghadimi2013stochastic}, which is shown to be optimal recently by Arjevani et al. \cite{arjevani2023lower}. With smooth stochastic oracles, the rate has been improved to $O(\epsilon^{-3})$ \cite{fang2018spider, cutkosky2019momentum}, which is also shown to be optimal in \cite{arjevani2023lower}. These results are the counterparts to convergence rates of second-order methods for \eqref{problem:bilevel} with unbiased and smooth Jacobian/Hessian stochastic oracles \cite{chen2021closing, khanduri2021near}. The same rate $O(\epsilon^{-3})$ can also be achieved only with first-order oracles, as shown in \cite{yang2023achieving} with assuming higher-order stochastic smoothness.


\section{Preliminaries}
\label{section:prelim}
Throughout the paper, we specify the assumptions on the smoothness of objective functions. The first one is the global smoothness properties: 
\begin{assumption}
    \label{assumption:nice_functions}
    The functions $f$ and $g$ satisfy the following smoothness conditions.
    \vspace{-0.3cm}
    \begin{enumerate}[itemsep=-0.01cm]
        \item[1.] $f,g$ are continuously-differentiable and $l_{f,1}, l_{g,1}$-smooth respectively, jointly in $(x,y)$ over $\mathbb{R}^{d_x\times d_y}$.
        \item[2.] For every $(x,y) \in \mathbb{R}^{d_x \times d_y}$, $\|\grad_y f(x, y)\| \le l_{f,0}$.
        \item[3.] For every $\bar{x} \in \mathbb{R}^{d_x}$, $g(\bar{x}, \cdot)$ is $\mu_g$-strongly convex in $y$.
    \end{enumerate}
\end{assumption}
In addition to the smoothness of individual objective functions, in Bilevel optimization, we also desire the hyperobjective to be smooth. This can be indirectly assumed by the Lipschitzness of derivatives of $g$:
\begin{assumption}
    \label{assumption:hessian_lipschitz_g}
    $\grad^2_{xy} g, \grad^2_{yy} g$ are well-defined and $l_{g,2}$-Lipschitz jointly in $(x,y)$ for all $(x,y) \in \mathbb{R}^{d_x \times d_y}$.
\end{assumption}


\paragraph{Oracle Classes.} We assume that we can access first-order information of objective functions only through stochastic oracles $\mathrm{O}$ that are $y^{*}(x)$-aware for some radius $r = \Omega(\epsilon)$ (see Definition \ref{def:ystart_aware}). Note that if we take $r=\infty$, these oracles become the usual first-order stochastic gradient oracles that can be queried at any $(x,y)$ and $\hat{y}(x)$ is uninformative. Thus our assumption on the oracles includes the conventional ones. We consider that stochastic oracles allow $N$-simultaneous query: the oracle takes $N$-simultaneous query points $(\bm{x}, \bm{y}) = \left\{(x^n, y^n)\right\}_{n=1}^N$ with $N \ge 2$, and the oracle returns $\{(\hat{\grad} f(x^n, y^n; \zeta), \hat{\grad} g(x^n, y^n; \xi), \hat{y}^n(x^n) )\}_{n=1}^N$, where $\hat{y}^n(x^n)$ is an estimator of $y^*(x^n)$. We denote the oracle class 
as $\mO(N, \sigma^2, \tilde{l}_{g,1}, r)$. We note that $\tilde{l}_{g,1} \ge l_{g,1}$ must always hold. 



\paragraph{Additional Notation} Throughout the paper, $\|\cdot\|$ denotes the Euclidean norm for vectors and operator norm for matrices, and  $\texttt{Var}(\cdot)$ denotes the variance of a random vector. We often denote $a \lesssim b$ when the inequality holds up to some absolute constant. $\mathbb{B}(x, r)$ is a Euclidean ball of radius $r > 0$ around a point $x$.

\section{Upper Bounds}
\label{section:upper_bound}
In this section, we prove the $O(\epsilon^{-6})$ and $O(\epsilon^{-4})$ upper bounds without and with stochastic smoothness \eqref{eq:mean_squared_lipschitz}, respectively. We mainly focus on finding a stationary point of the surrogate hyperobjective $\mL_{\lambda}^*(x)$ defined in \eqref{eq:penalty_surrogate}.
Thanks to $\|F(x) - \mL_{\lambda}^*(x)\| = O(\lambda^{-1})$ given in  Lemma 3.1 from \cite{kwon2023fully}, we get $\epsilon$-starionarity of $F(x)$ with a choice of $\lambda = O(\epsilon^{-1})$. We mention here that our upper bounds do not depend on the `reliability radius' of the oracle $r$ as long as $r \ge \frac{6 l_{f,0}}{\mu_g \lambda}$, hence the readers may assume the standard oracle model with $r = \infty$.

\subsection{$O(\epsilon^{-6})$ Upper Bound}
\label{sec:6up}

We show that double-loop \texttt{F$^2$SA} introduced in \cite{kwon2023fully} can be improved by $O(\epsilon)$ from $O(\epsilon^{-7})$ to $O(\epsilon^{-6})$ with suitable choice of outer-loop batch-size $M$ and inner-loop iterations $T$.


As in \cite{kwon2023fully}, the main challenge comes from handling the bias raised every iteration by using approximations of $y_{\lambda}^*(x)$ and $y^*(x)$. 
Specifically, let $y^{k+1}$ and $z^{k+1}$ be the estimates of $y_{\lambda}^*(x^k)$ and $ y^*(x^k)$ at the $k^{th}$ iteration, respectively. Using these estimates we may approximate $\grad \mL_{\lambda}^*(x^k)$ by (see \eqref{eq:grad_penalty}) 
\ifarxiv
\begin{align}
    G_k &:= \grad_x f(x^k, y^{k+1}) + \lambda (\grad_x g(x^k, y^{k+1}) - \grad_x g(x^k, z^{k+1})). \label{eq:biased_expected_gradient}
\end{align}
\else
\begin{align}
\nonumber
    G_k &:= \grad_x f(x^k, y^{k+1})\\ &\quad + \lambda (\grad_x g(x^k, y^{k+1}) - \grad_x g(x^k, z^{k+1})). \label{eq:biased_expected_gradient}
\end{align}
\fi
Comparing $\grad \mL_{\lambda}^*(x^k)$ and $G_{k}$,  it is natural to consider 
\begin{align}
\label{eq:bias1}
    \lambda(\|y^{k+1} - y_{\lambda}^* (x^k)\| + \|z^{k+1} - y^* (x^k)\|)
\end{align} 
as the order of bias of approximating $\grad \mL_{\lambda}^*(x^k)$ by $G_{k}$. For this bias to be less than $\epsilon$, we need $O(\epsilon /\lambda) = O(\epsilon^{2})$ accuracy of $y^{k+1}$ and $z^{k+1}$, which in turn requesting $T \asymp \epsilon^{-4}$ inner-loop iterations (with SGD on strongly-convex functions) before updating $x^k$ with an unbiased estimate of $G_k$. A similar idea has been used in \cite{chen2023near} where $\tilde{O}(\epsilon^{-3})$ bound in the deterministic setting shown in \cite{kwon2023fully} has been improved to $\tilde{O}(\epsilon^{-2})$ with a choice of $T \asymp \log(\lambda)$.

In the outer loop, we have $x^{k+1} = x^k - \alpha \hat{G}_k$ where
\begin{align}
\label{eq:hatg2}
    \hat{G}_k &:= M^{-1} \tssum_{m=1}^{M} h_{x}^{k,m}.
\end{align}
In Algorithm~\ref{algo:penalty_coupling_yz}, we use the following notations:
\ifarxiv
\begin{align*}
    h_{y}^{k,t} &= \lambda^{-1} \hat{\grad}_y f(x^k,y^{k,t}; \zeta^{k,t}) + \hat{\grad}_y g(x^k, y^{k,t}; \xi^{k,t}), \\
    h_{z}^{k,t} &= \hat{\grad}_y g(x^k, z^{k,t}; \xi^{k,t}), \\
    h_{x}^{k,m} &= \hat{\grad}_x f(x^k, y^{k+1}; \zeta_{k,m}^x) + \lambda (\hat{\grad}_xg(x^k, y^{k+1}; \xi_{x}^{k,m}) - \hat{\grad}_x g(x^k, z^{k+1}; \xi^{k,m}_{x}) ).
\end{align*}
\else
\begin{align*}
    h_{y}^{k,t} &= \lambda^{-1} \hat{\grad}_y f(x^k,y^{k,t}; \zeta^{k,t}) + \hat{\grad}_y g(x^k, y^{k,t}; \xi^{k,t}), \\
    h_{z}^{k,t} &= \hat{\grad}_y g(x^k, z^{k,t}; \xi^{k,t}), \\
    h_{x}^{k,m} &= \hat{\grad}_x f(x^k, y^{k+1}; \zeta_{k,m}^x) \ + \\
    &\quad \lambda (\hat{\grad}_xg(x^k, y^{k+1}; \xi_{x}^{k,m}) - \hat{\grad}_x g(x^k, z^{k+1}; \xi^{k,m}_{x}) ).
\end{align*}
\fi
Under \eqref{eq:unbiased_gradients} and \eqref{eq:bounded_variance_gradients},  $\Var(\hat{G}_k):= \E[\|G_k - \hat{G}_k\|^2)] = O(\lambda^2/M)$ as shown in  Lemma~\ref{lemma:Lgrad_variance}. Thus, the variance can be handled with $M \asymp \epsilon^{-4}$, which suffices to get the $O(\epsilon^{-6})$ upper bound. The proof can be found in Appendix~\ref{ap:up1}.


\begin{algorithm}[t]
    \caption{Penalty Methods with Lower-Level Coupling}
    \label{algo:penalty_coupling_yz}
    {{\bf Input:} total outer-loop iterations: $K$, batch size: $M$, step sizes: $\alpha, \{\gamma_t\}_t$, penalty parameter: $\lambda$, Oracle reliability radius: $r$, Trust-region radius for $(y-z)$: $r_\lambda$, initializations: $x_0 \in \mathbb{R}^{d_x}, y_0, z_0$}
    \begin{algorithmic}[1]
        \FOR{$k = 0 ... K-1$}
            \STATE{\color{blue}{\# Initialize Lower-Level Iteration Variables}}
            \STATE{Get $\hat{y}(x^k)$ such that $\|\hat{y}(x^k) - y^*(x^k)\|\le \frac{r}{2}$.}
            \STATE{$y^{k,0} \leftarrow \Pi_{\mathbb{B}(\hat{y}(x^k), \frac{2r}{3})} \{y^k\}$, $\Delta_y = y^{k,0} - y^k$}
            \STATE{{\bf (If $\tilde{l}_{g,1}=\infty$):} $z^{k,0} \leftarrow \Proj{\mathbb{B}(y^{k}, \frac{r}{2})}{z^{k}}$}
            \STATE{{\bf (Else):} $z^{k,0} \leftarrow \Proj{\mathbb{B}(y^{k,0}, r_\lambda)}{z^{k}+\Delta_y}$}
            \STATE{\color{blue}{\# Lower-Level Iteration with Coupling $y,z$}}
            \FOR{$t = 0, ..., T-1$}
                \STATE{$\bar{y}^{k, t} \leftarrow y^{k,t} - \gamma_t h_y^{k,t}$, $\bar{z}^{k,t} \leftarrow z^{k,t} - \gamma_t h_{z}^{k,t}$}
                \STATE{\color{blue}{\# $y^*$-Awareness }}
                \STATE{$y^{k,t+1} \leftarrow \Pi_{\mathbb{B}(\hat{y}(x^k), \frac{2r}{3})} \{\bar{y}^{k,t}\}$, $\Delta_y = y^{k,t+1} - \bar{y}^{k,t}$}
                \STATE{{\bf (If $\tilde{l}_{g,1}=\infty$):} $z^{k,t+1} \leftarrow \Proj{\mathbb{B}(\hat{y}(x^k), \frac{r}{2})}{\bar{z}^{k,t}}$}
                \STATE{{\bf (Else):} $z^{k,t+1} \leftarrow \Proj{\mathbb{B}(y^{k, t+1}, r_\lambda)}{\bar{z}^{k,t}+\Delta_y}$}
            \ENDFOR
            \STATE{$y^{k+1}, z^{k+1} \leftarrow y^{k,T}, z^{k,T}$}
            \STATE{\color{blue}{\# Upper-Level Iteration with Coupling $y,z$}}
            \STATE{$x^{k+1} \leftarrow x^k - \frac{\alpha}{M} \sum_{m=1}^{M} h_{x}^{k,m}$}
        \ENDFOR
    \end{algorithmic}
\end{algorithm}






\begin{theorem}
\label{thm:up1}
    Suppose Assumptions \ref{assumption:nice_functions} and \ref{assumption:hessian_lipschitz_g} hold and let $\lambda = \max\left( \frac{\lambda_0}{\epsilon}, \frac{6 l_{f,0}}{\mu_g r} \right) \asymp \epsilon^{-1}$, $r_{\lambda} = \frac{l_{f,0}}{\mu_g \lambda}$ where $\lambda_0 := \frac{4 l_{f,0} l_{g,1}}{\mu_g^2} \left( l_{f,1} + \frac{2 l_{f,0} l_{g,2}}{\mu_g} \right)$. Under \eqref{eq:unbiased_gradients} and \eqref{eq:bounded_variance_gradients}, Algorithm \ref{algo:penalty_coupling_yz} with $ \alpha \ll \frac{1}{l_{g,1}}$, $ T \asymp \epsilon^{-4}$, and $\gamma_{t}=\left( \frac{2}{\mu_{g}} + \frac{\lambda}{l_{f,1} + \lambda l_{g,1}} \right) \frac{1}{ 1+t}$  finds an $\epsilon$-stationary point of \eqref{problem:bilevel} within $O(\epsilon^{-6})$ iterations.
\end{theorem}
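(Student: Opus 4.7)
The plan is to show that with $\lambda = \Theta(\epsilon^{-1})$, a constant outer step size $\alpha = \Theta(1)$ (bounded by $1/\tilde L$ for the smoothness constant $\tilde L$ of $\mL_\lambda^*$), batch size $M \asymp \epsilon^{-4}$, inner-loop length $T \asymp \epsilon^{-4}$, and $K \asymp \epsilon^{-2}$ outer iterations, Algorithm~\ref{algo:penalty_coupling_yz} returns an $\epsilon$-stationary point of $\mL_\lambda^*$, hence an $O(\epsilon)$-stationary point of $F$ via the penalty-approximation bound $\|\grad F - \grad\mL_\lambda^*\| = O(1/\lambda)$ of \cite{kwon2023fully}. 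The total oracle complexity is $K(T+M) = O(\epsilon^{-6})$. A preliminary ingredient, also taken from \cite{kwon2023fully}, is that under Assumptions~\ref{assumption:nice_functions}--\ref{assumption:hessian_lipschitz_g} the surrogate $\mL_\lambda^*$ is $\tilde L$-smooth with $\tilde L$ independent of $\lambda$, so that a biased-SGD descent inequality applies.

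First I would carry out the outer-loop analysis. Applying the standard descent lemma for $\tilde L$-smooth functions to $\mL_\lambda^*(x^{k+1})$, decomposing $\hat G_k - \grad\mL_\lambda^*(x^k) = (\hat G_k - G_k) + (G_k - \grad\mL_\lambda^*(x^k))$ into a zero-mean variance term and a bias term, taking expectations, telescoping, and dividing by $K$ gives
\[
  \min_{k<K} \E\bigl[\|\grad\mL_\lambda^*(x^k)\|^2\bigr] \lesssim \frac{\mL_\lambda^*(x^0)-\inf\mL_\lambda^*}{\alpha K} + \max_k \E\bigl[\|G_k-\grad\mL_\lambda^*(x^k)\|^2\bigr] + \alpha \tilde L \max_k \E\bigl[\|\hat G_k-G_k\|^2\bigr].
\]
The $l_{f,1}$- and $l_{g,1}$-smoothness from Assumption~\ref{assumption:nice_functions} gives $\|G_k - \grad\mL_\lambda^*(x^k)\| \le (l_{f,1}+\lambda l_{g,1})(\|y^{k+1}-y_\lambda^*(x^k)\|+\|z^{k+1}-y^*(x^k)\|) = O(\lambda)\cdot(\text{inner-loop error})$, while Lemma~\ref{lemma:Lgrad_variance} supplies $\E[\|\hat G_k-G_k\|^2] = O(\lambda^2/M)$. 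With $M \asymp \lambda^2/\epsilon^2 = \epsilon^{-4}$ and $K \asymp \epsilon^{-2}$ the first and third terms are $O(\epsilon^2)$, reducing the proof to showing $\E[\|y^{k+1}-y_\lambda^*(x^k)\|^2 + \|z^{k+1}-y^*(x^k)\|^2] = O(\epsilon^4)$.

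Next, I would analyze the inner loop. The $y$-iterates perform projected SGD on the $\mu_g$-strongly convex objective $\lambda^{-1}f(x^k,\cdot)+g(x^k,\cdot)$, and the $z$-iterates on $g(x^k,\cdot)$; the $y^*$-aware projection onto $\mathbb{B}(\hat y(x^k),\cdot)$ is non-expansive toward both $y_\lambda^*(x^k)$ and $y^*(x^k)$ since $\|\hat y(x^k)-y^*(x^k)\|\le r/2$ and $\|y_\lambda^*(x^k)-y^*(x^k)\|\le l_{f,0}/(\mu_g\lambda) = r_\lambda \le r/6$ by the choice of $\lambda$, and moreover keeps all queries within the oracle reliability radius so that \eqref{eq:unbiased_gradients}--\eqref{eq:bounded_variance_gradients} are in force throughout the loop. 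The classical last-iterate rate for strongly convex projected SGD with $\gamma_t \propto 1/(1+t)$ then yields $\E[\|y^{k,T}-y_\lambda^*(x^k)\|^2] \lesssim (D_k^2 + \sigma^2/\mu_g^2)/T$ where $D_k^2 := \|y^{k,0}-y_\lambda^*(x^k)\|^2$, and analogously for $z^{k,T}$. Choosing $T \asymp \epsilon^{-4}$ delivers the desired $O(\epsilon^4)$ squared error provided $D_k^2 = O(1)$ uniformly in $k$.

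The main obstacle I anticipate is controlling $D_k^2$ across outer iterations: because $x^k$ moves by $\alpha \hat G_k$, the targets $y_\lambda^*(x^k)$ and $y^*(x^k)$ drift by $O(\alpha\|\hat G_k\|)$, and an unchecked warm-start could let the initial error grow. This is exactly what the projections in lines~3--4 and~10--11 of Algorithm~\ref{algo:penalty_coupling_yz} handle: by projecting $y^{k,0}$ onto $\mathbb{B}(\hat y(x^k),2r/3)$ and $z^{k,0}$ onto $\mathbb{B}(\hat y(x^k),r/2)$ we force both $y^{k,0}$ and $z^{k,0}$ into a ball of radius $O(r)=O(1)$ around their respective targets, so $D_k^2 = O(1)$ unconditionally. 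Combining the $O(\epsilon^4)$ inner-loop guarantee with the outer-loop inequality above makes each of the three error terms $O(\epsilon^2)$, yielding $\min_{k<K}\E[\|\grad\mL_\lambda^*(x^k)\|^2] = O(\epsilon^2)$; the penalty bound $\|\grad F - \grad\mL_\lambda^*\| = O(\epsilon)$ then upgrades this to $\min_{k<K}\E[\|\grad F(x^k)\|^2] = O(\epsilon^2)$, completing the proof.
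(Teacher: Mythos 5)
Your outline matches the paper's strategy up to the point where you control the warm-start error $D_k^2 := \|y^{k,0}-y_\lambda^*(x^k)\|^2$: the outer descent lemma, the bias-variance split of $\hat G_k - \grad\mL_\lambda^*(x^k)$, the bias bound $(l_{f,1}+\lambda l_{g,1})(\|y^{k+1}-y_\lambda^*\|+\|z^{k+1}-y^*\|)$, the $O(\lambda^2/M)$ variance from Lemma~\ref{lemma:Lgrad_variance}, and the $O(1/T)$ last-iterate rate for strongly convex projected SGD with $\gamma_t\propto 1/(1+t)$ are all exactly Propositions~\ref{prop:upl}, \ref{prop:upeq} and Lemma~\ref{lem:SGD_sc_complexity}.

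The gap is in your claim that the initialization projections force $D_k^2 = O(1)$ ``unconditionally.'' That argument bounds $D_k \lesssim r$, which is only $O(1)$ when the reliability radius $r$ is itself $O(1)$. But the theorem (and the explicit remark at the start of Section~\ref{section:upper_bound}) is meant to cover $r=\infty$, in which case the projections onto $\mathbb{B}(\hat y(x^k), 2r/3)$ and $\mathbb{B}(y^k, r/2)$ are no-ops and your bound on $D_k$ evaporates; the targets $y_\lambda^*(x^k), y^*(x^k)$ still drift by $O(\alpha\|\hat G_k\|)$ each outer step, and a priori nothing prevents the warm-start error from accumulating. The paper instead controls $\mathcal{J}_k := \|y^{k,0}-y_\lambda^*(x^k)\|^2+\|z^{k,0}-y^*(x^k)\|^2$ via a contraction-plus-drift recursion, $\E[\mathcal{J}_{k+1}\,|\,\mF_k] \le \tfrac{1}{2}\mathcal{J}_k + O(1) + O(l_{g,1}^2/\mu_g^2)\,\E[\|x^{k+1}-x^k\|^2\,|\,\mF_k]$, which holds because the inner loop is a geometric contraction (once $T\gtrsim \mu_g^{-1} C/\lambda$) and $y^*_\lambda,y^*$ are $O(l_{g,1}/\mu_g)$-Lipschitz in $x$; unrolling this and absorbing the $\|x^{k+1}-x^k\|^2 = \alpha_k^2\|\hat G_k\|^2$ terms into the spare $-\tfrac{\alpha_k}{4}\|\hat G_k\|^2$ slack from the descent lemma (for $\alpha_k$ small enough) gives $\sum_k\alpha_k\E[\mathcal{J}_k]\lesssim 1 + \sum_k\alpha_k$, which is what makes the $O(\epsilon^{-6})$ count go through uniformly in $r$. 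You would need to supply this (or an equivalent warm-start recursion) to close the proof.
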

It is worth mentioning that compared to the $O(\epsilon^{-7})$ results in \cite{kwon2023fully}, we improve the upper bound to $O(\epsilon^{-6})$, and do not require additional assumption on the Hessian-Lipschitzness of $f$.

\subsection{$O(\epsilon^{-4})$ Upper Bound}
\label{sec:4up}

If we aim to get $O(\epsilon^{-4})$ upper bound with first-order smooth oracles, $T\asymp\epsilon^{-4}$ inner-loop iterations are too many to achieve the goal. When we have stochastic smoothness ({\it i.e.,} $\tilde{l}_{g,1} < \infty$), we show that choosing $T \asymp \epsilon^{-2}$ is sufficient to obtain the desired convergence rate. The proof can be found in  Appendix~\ref{ap:up2}.

\begin{theorem}
\label{thm:up2}
Suppose that Assumptions \ref{assumption:nice_functions}-\ref{assumption:hessian_lipschitz_g}, \eqref{eq:unbiased_gradients}, \eqref{eq:bounded_variance_gradients}, and \eqref{eq:mean_squared_lipschitz}
hold. Let the algorithm parameters satisfy the following:
    \begin{align}
    \label{eq:rate}
        \lambda \ge \max\left(\frac{\lambda_0}{\epsilon}, \frac{6l_{f,0}}{\mu_g r}\right), \ \ r_{\lambda} = \frac{l_{f,0}}{\mu_g \lambda}, \ \ \alpha \ll 1. 
    \end{align}
Then, Algorithm \ref{algo:penalty_coupling_yz} with $\lambda \asymp \epsilon^{-1}$, $\gamma \asymp \epsilon^2$, $T \asymp \epsilon^{-2}$, $M \asymp \epsilon^{-2}$, and $K \asymp \epsilon^{-2}$ finds an $\epsilon$-stationary point of \eqref{problem:bilevel} within $O(\epsilon^{-4})$ iterations. 
\end{theorem}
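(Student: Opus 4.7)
The plan is to reduce the task to $\epsilon/2$-stationarity of the penalty surrogate $\mL_\lambda^*$ (which yields $\epsilon$-stationarity of $F$ by Lemma~3.1 of \cite{kwon2023fully} together with $\lambda \asymp \epsilon^{-1}$), and then carry out a biased nonconvex SGD analysis of the outer loop with a constant step $\alpha = \Theta(1)$. This step size is admissible because $\mL_\lambda^*$ is $O(1)$-smooth (the apparent $\lambda$-factors in its Jacobian cancel using $\|y_\lambda^*(x) - y^*(x)\| = O(1/\lambda)$ and the Hessian-Lipschitzness of $g$ in Assumption~\ref{assumption:hessian_lipschitz_g}). The standard biased SGD estimate then reads
\[
\min_{k<K} \Exs\|\grad \mL_\lambda^*(x^k)\|^2 \;\le\; O\!\left(\tfrac{1}{\alpha K} + \alpha\, \Var(\hat G_k) + \mathrm{bias}^2\right) \;=\; O(\epsilon^2),
\]
provided $\Var(\hat G_k) \le O(\epsilon^2)$ and $\mathrm{bias} \le O(\epsilon)$, which delivers the $K \asymp \epsilon^{-2}$ count.

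For the variance, the trust-region projection onto $\mathbb{B}(y^{k,t+1}, r_\lambda)$ with $r_\lambda = \Theta(1/\lambda)$ enforces $\|y^{k+1} - z^{k+1}\| \le r_\lambda$; combined with the shared randomness $\xi^{k,m}$ in $h_x^{k,m}$ and \eqref{eq:mean_squared_lipschitz}, this gives $\Var(h_x^{k,m}) \le O(\sigma_f^2 + \lambda^2 \tilde l_{g,1}^2 r_\lambda^2) = O(1)$, and averaging over $M \asymp \epsilon^{-2}$ samples yields $\Var(\hat G_k) = O(\epsilon^2)$. For the bias, set $e_y := y^{k+1} - y_\lambda^*(x^k)$ and $e_z := z^{k+1} - y^*(x^k)$, and Taylor-expand $G_k - \grad \mL_\lambda^*(x^k)$. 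Using Assumption~\ref{assumption:hessian_lipschitz_g} to rewrite the would-be $\lambda$-amplified cross-term $\lambda(\grad_{xy}^2 g(y_\lambda^*) e_y - \grad_{xy}^2 g(y^*) e_z)$ as $\lambda \grad_{xy}^2 g(y^*)(e_y - e_z) + O(\|e_z\|)$—where the $O(\|e_z\|)$ piece absorbs the $\lambda \cdot l_{g,2}\|y_\lambda^* - y^*\| = O(1)$ prefactor—the bias is dominated by $O\bigl(\|e_y\| + \|e_z\| + \lambda\|\eta^{k+1} - \eta^*(x^k)\|\bigr)$ with $\eta := y-z$ and $\eta^* := y_\lambda^* - y^*$, plus harmless quadratic remainders $O(\lambda\|e_y\|^2 + \lambda\|e_z\|^2)$. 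Thus it suffices that $\|e_y\|^2, \|e_z\|^2 \lesssim \epsilon^2$ and $\|\eta^{k+1} - \eta^*(x^k)\|^2 \lesssim \epsilon^4$.

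The inner loop ($T \asymp \epsilon^{-2}$, $\gamma \asymp \epsilon^2$) provides exactly these precisions. Individually, SGD on the $\mu_g$-strongly-convex $\lambda^{-1}f + g$ and $g$ gives a geometric contraction $(1 - \mu_g\gamma)^T = \Theta(1)<1$ per outer iteration with noise floor $O(\gamma/\mu_g) = O(\epsilon^2)$ on $\|e_y\|^2$ and $\|e_z\|^2$. For the coupled residual $\eta - \eta^*$, shared randomness $\xi^{k,t}$ between $h_y^{k,t}$ and $h_z^{k,t}$, together with stochastic smoothness inside the trust region, reduces $\Var(h_y^{k,t} - h_z^{k,t}) \le O(\lambda^{-2}\sigma_f^2 + \tilde l_{g,1}^2 \|y-z\|^2) = O(1/\lambda^2)$, producing the refined noise floor $O(\gamma/\lambda^2) = O(\epsilon^4)$. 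Crucially, $\eta^*(\cdot)$ is $O(1/\lambda)$-Lipschitz in $x$: the implicit function theorem applied to $y_\lambda^*$ and $y^*$ yields Jacobians that agree up to $O(1/\lambda)$ under Assumption~\ref{assumption:hessian_lipschitz_g}. Hence warm-starting introduces only $O(\alpha^2/\lambda^2) = O(\epsilon^4)$ perturbation per outer step, preserving the $\epsilon^4$-floor. The argument is then packaged through the Lyapunov
\[
 V_k := \mL_\lambda^*(x^k) + c_y\,\Exs\|e_y^k\|^2 + c_z\,\Exs\|e_z^k\|^2 + c_\eta\, \lambda^2\, \Exs\|\eta^k - \eta^*(x^k)\|^2,
\]
where the $\lambda^2$-weight on the coupled residual matches the $\lambda$-amplification in the bias; one shows $\Exs[V_{k+1} - V_k] \le -\Theta(\alpha)\,\Exs\|\grad \mL_\lambda^*(x^k)\|^2 + O(\alpha \epsilon^2)$ and telescopes over $K \asymp \epsilon^{-2}$ iterations. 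The total oracle count is $K(T+M) = O(\epsilon^{-4})$.

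The main obstacle is the bias decomposition that isolates $\lambda \|\eta^{k+1} - \eta^*\|$ as the \emph{only} $\lambda$-amplified contribution, together with the refined coupled-variance analysis for the $\eta$-dynamics and the proof that $\eta^*(\cdot)$ is $O(1/\lambda)$-Lipschitz in $x$. Without all three structural cancellations, the coupling of $y$ and $z$ would merely save constants, and the complexity would degrade back to the $O(\epsilon^{-6})$ rate of Theorem~\ref{thm:up1}.
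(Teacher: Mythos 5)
Your proposal is correct and follows essentially the same route as the paper: the bias decomposition isolating $\lambda\|\eta^{k+1}-\eta^*(x^k)\|$ as the only $\lambda$-amplified term (your $\eta$ is the paper's $v$) is Lemma~\ref{lemma:gradL_bias_control0}, the $O(1/\lambda)$-Lipschitzness of $\eta^*$ in $x$ is Lemma~\ref{lemma:v_star_contraction}, the coupled-variance reduction via shared randomness inside the trust region is Lemma~\ref{lemma:v_descent}, and the projection's nonexpansiveness toward $v^*$ is Proposition~\ref{lemma:projection_smaller_v}. Your Lyapunov $V_k$ is a repackaging of the paper's telescoping in Proposition~\ref{prop:final_upper_bound} (with the $\lambda^2$-weight exactly matching the $\lambda$-amplified bias term), and the final oracle count $K(T+M)=O(\epsilon^{-4})$ coincides.
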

To our best knowledge, the best known results under the similar setting achieve the $O(\epsilon^{-5})$ upper bound with the stochastic smoothness of both objective functions $f$ and $g$ jointly in $(x,y)$ \cite{kwon2023fully}. We show that the upper bound can be improved $O(\epsilon^{-4})$ with the only required additional assumption being the stochastic smoothness in $y$.

Our new observation here is that the estimation for the bias can be tightened by using $$v^{k}:= y^{k} - z^k \hbox{ and } v^*(x):= y^*_{\lambda}(x) - y^*(x).$$
The following lemma is the key to tighten the bias in terms of $y$ and $v$:
\begin{lemma}
    \label{lemma:gradL_bias_control0}
    Suppose that Assumptions \ref{assumption:nice_functions}-\ref{assumption:hessian_lipschitz_g}, \eqref{eq:unbiased_gradients}, \eqref{eq:bounded_variance_gradients}, \eqref{eq:mean_squared_lipschitz}, and $r_{\lambda} = \frac{l_{f,0}}{\mu_g \lambda}$
hold. Then,
    \ifarxiv
    \begin{align*}
        \|\grad \mL_{\lambda}^* (x^k) - G_k\| &\le l_{y} \|y^{k+1} - y_{\lambda}^* (x^k)\| + \lambda l_{g,1} \|v^{k+1} - v^*(x^k)\| + \frac{l_{f,0} l_{y}}{\mu_g \lambda},
    \end{align*}
    \else
    \begin{align*}
        \|\grad \mL_{\lambda}^* (x^k) - G_k\| &\le l_{y} \|y^{k+1} - y_{\lambda}^* (x^k)\| \\
        &\quad + \lambda l_{g,1} \|v^{k+1} - v^*(x^k)\| + \frac{l_{f,0} l_{y}}{\mu_g \lambda},
    \end{align*}
    \fi
    where $l_{y}:= l_{f,1} + \frac{l_{g,2} l_{f,0}}{\mu_g}$.
\end{lemma}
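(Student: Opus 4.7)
The plan is to decompose $G_k - \nabla\mathcal{L}_\lambda^*(x^k)$ into the $f$-contribution and the $g$-contribution and then exploit the coupling between $v^{k+1}=y^{k+1}-z^{k+1}$ and $v^*(x^k)=y_\lambda^*(x^k)-y^*(x^k)$ via a Taylor-integral representation of $\nabla_x g$. The naive triangle inequality would give $\lambda\|z^{k+1}-y^*(x^k)\|$, which is fatal, so the main technical point is to arrange the differences so that $\lambda$ only multiplies $\|v^{k+1}-v^*(x^k)\|$ while everything else is either $O(1)\cdot\|y^{k+1}-y_\lambda^*(x^k)\|$ or $O(1/\lambda)$.

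First, the $f$-part $\nabla_x f(x^k, y^{k+1}) - \nabla_x f(x^k, y_\lambda^*(x^k))$ is bounded by $l_{f,1}\|y^{k+1}-y_\lambda^*(x^k)\|$ by Assumption~\ref{assumption:nice_functions}. For the $g$-part, I would write
\[
\nabla_x g(x^k,y^{k+1})-\nabla_x g(x^k,z^{k+1}) = A\, v^{k+1},\qquad \nabla_x g(x^k,y_\lambda^*(x^k))-\nabla_x g(x^k,y^*(x^k)) = B\, v^*(x^k),
\]
with $A=\int_0^1 \nabla_{xy}^2 g(x^k, z^{k+1}+s v^{k+1})\,ds$ and $B=\int_0^1 \nabla_{xy}^2 g(x^k, y^*(x^k)+s v^*(x^k))\,ds$. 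Then
\[
\lambda[A v^{k+1}-B v^*(x^k)] = \lambda A(v^{k+1}-v^*(x^k)) + \lambda (A-B) v^*(x^k),
\]
and $\|A\|\le l_{g,1}$ immediately gives the desired $\lambda l_{g,1}\|v^{k+1}-v^*(x^k)\|$ term.

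The crucial step is bounding $\lambda\|(A-B)v^*(x^k)\|$. Here I would use $\|A-B\|\le l_{g,2}\int_0^1\|z^{k+1}+s v^{k+1}-y^*(x^k)-s v^*(x^k)\|\,ds$ together with the algebraic identity
\[
z^{k+1}+s v^{k+1}-y^*(x^k)-s v^*(x^k) = (1-s)(z^{k+1}-y^*(x^k)) + s(y^{k+1}-y_\lambda^*(x^k)),
\]
which telescopes the two endpoints. Combined with the trust-region bound $\|v^{k+1}\|\le r_\lambda=l_{f,0}/(\mu_g\lambda)$ from Algorithm~\ref{algo:penalty_coupling_yz} and the $y_\lambda^*$-proximity bound $\|v^*(x^k)\|\le l_{f,0}/(\mu_g\lambda)$ from \cite{kwon2023fully}, the factor $\lambda\|v^*(x^k)\|$ collapses to a constant $l_{f,0}/\mu_g$, and using $\|z^{k+1}-y^*(x^k)\|\le \|y^{k+1}-y_\lambda^*(x^k)\|+\|v^{k+1}-v^*(x^k)\|$ yields an extra $\frac{l_{f,0}l_{g,2}}{\mu_g}\|y^{k+1}-y_\lambda^*(x^k)\|$ (folded into the $l_y$ coefficient on $\|y^{k+1}-y_\lambda^*(x^k)\|$) plus an $O(\|v^{k+1}-v^*(x^k)\|)$ residual.

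The last residual, of order $\frac{l_{f,0}l_{g,2}}{\mu_g}\|v^{k+1}-v^*(x^k)\|$, is not of the desired form $\lambda l_{g,1}\|v^{k+1}-v^*(x^k)\|$, but since $\|v^{k+1}-v^*(x^k)\|\le 2l_{f,0}/(\mu_g\lambda)$ by the two trust-region bounds above, this residual can be absorbed into the final $\frac{l_{f,0}l_y}{\mu_g\lambda}$ term after matching coefficients. I expect the main obstacle to be precisely this bookkeeping: ensuring that every term generated by the splitting lands in one of the three allowed buckets ($l_y\|y^{k+1}-y_\lambda^*\|$, $\lambda l_{g,1}\|v^{k+1}-v^*\|$, or $O(1/\lambda)$), and that the coefficient of $\|y^{k+1}-y_\lambda^*(x^k)\|$ collects exactly to $l_y=l_{f,1}+l_{g,2}l_{f,0}/\mu_g$ rather than something larger.
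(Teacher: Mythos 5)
Your proof is correct and lands on the same constants as the paper's, but the execution differs in a worthwhile way. The paper's proof of this lemma (Appendix~\ref{ap:up0}) uses \emph{second-order Taylor expansions} of $\nabla_x g(x,\cdot)$ at the anchor points $y_{\lambda,k}^*$ and $y^{k+1}$, producing explicit remainder terms $\lambda l_{g,2}\bigl(\|v_k^*\|^2 + \|v^{k+1}\|^2\bigr)$, and then splits the difference of the two \emph{pointwise} Hessian--vector products as
$(\nabla_{xy}^2 g(x,y_{\lambda,k}^*) - \nabla_{xy}^2 g(x,y^{k+1}))(-v_k^*) + \nabla_{xy}^2 g(x,y^{k+1})(v^{k+1}-v_k^*)$.
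You instead use the \emph{exact} fundamental-theorem-of-calculus representation with averaged Hessians $A$ and $B$, split as $A(v^{k+1}-v_k^*)+(A-B)v_k^*$, and control $\|A-B\|$ via the clean telescoping identity
$z^{k+1}+sv^{k+1}-y_k^*-sv_k^* = (1-s)(z^{k+1}-y_k^*)+s(y^{k+1}-y_{\lambda,k}^*)$,
which replaces the paper's quadratic Taylor remainders by a term $\tfrac{l_{g,2}l_{f,0}}{2\mu_g}\|v^{k+1}-v_k^*\|$. What both routes share — and what makes them work — is (i) arranging the decomposition so that $\lambda$ multiplies only $\|v^{k+1}-v_k^*\|$, and (ii) using the trust-region/proximity bounds $\|v^{k+1}\|,\|v_k^*\|\le r_\lambda = l_{f,0}/(\mu_g\lambda)$ to force every other term into either the $l_y\|y^{k+1}-y_{\lambda,k}^*\|$ bucket or the $O(1/\lambda)$ bucket. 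Your residual absorption step is fine: $\tfrac{l_{g,2}l_{f,0}}{2\mu_g}\|v^{k+1}-v_k^*\|\le \tfrac{l_{g,2}l_{f,0}^2}{\mu_g^2\lambda}\le \tfrac{l_{f,0}l_y}{\mu_g\lambda}$ since $l_y \ge l_{g,2}l_{f,0}/\mu_g$. The integral-representation route is arguably cleaner — it avoids juggling two separate Taylor remainders — at the minor cost of the bookkeeping step you flagged; the paper's route keeps each Hessian tied to a single point, which some readers may find easier to parse against Assumption~\ref{assumption:hessian_lipschitz_g}.
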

The proof is given in Appendix~\ref{ap:up0}. In comparison to \eqref{eq:bias1}, the term $\|y^{k+1} - y_{\lambda}^* (x^k)\|$ on the upper bound does not depend on $\lambda$. As a consequence, $T = O(\epsilon^{-2})$ (with $\gamma = O(\epsilon^{2})$ is enough to obtain $O(\epsilon)$ accuracy of $y^{k+1}$ and $z^{k+1}$.

On the other hand, we observe that the variance of stochastic noises in updating $v^k = y^{k}-z^{k}$ can be bounded by $O(\|y^{k} - z^{k}\|^2)$, which can be bounded by $O(1/\lambda^2)$ (instead of $O(\sigma^2) = O(1)$) with a forced projection step. We observe that the projection makes the distance between $v^k$ and $v^*$ smaller. Let $\bar{v}^{k,t} := \bar{y}^{k,t} - \bar{z}^{k,t}$.

\begin{proposition}    \label{lemma:projection_smaller_v}
    Suppose that Assumptions \ref{assumption:nice_functions}-\ref{assumption:hessian_lipschitz_g}, \eqref{eq:unbiased_gradients}, \eqref{eq:bounded_variance_gradients}, and \eqref{eq:mean_squared_lipschitz}
hold. Then, for all $k,t$ with $r_{\lambda} = \frac{l_{f,0}}{\lambda \mu_g}$: 
\begin{align}
\label{eq:pv}
    \|v^{k,t+1} - v_k^*\| \le \|\bar{v}^{k,t} - v_{k}^*\|.
\end{align}
\end{proposition}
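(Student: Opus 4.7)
The plan is to recognize that the projection step defining $z^{k,t+1}$ in the ``Else'' branch of Algorithm~\ref{algo:penalty_coupling_yz} is engineered precisely so that, when viewed in the difference variable $v = y - z$, it amounts to a Euclidean projection onto the ball $\mathbb{B}(0, r_\lambda)$. Once this reformulation is in place, the desired inequality becomes a one-line consequence of the non-expansiveness of projection onto a closed convex set, together with the fact that $v_k^{*}$ itself lies in $\mathbb{B}(0, r_\lambda)$.

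First I would do the algebraic bookkeeping. By definition, $\Delta_y = y^{k,t+1} - \bar y^{k,t}$ and the point being projected when forming $z^{k,t+1}$ is $\bar z^{k,t} + \Delta_y$. Subtracting from $y^{k,t+1}$ gives
\[
    y^{k,t+1} - \bigl(\bar z^{k,t} + \Delta_y\bigr) = y^{k,t+1} - \bar z^{k,t} - y^{k,t+1} + \bar y^{k,t} = \bar y^{k,t} - \bar z^{k,t} = \bar v^{k,t}.
\]
Since projecting $\bar z^{k,t}+\Delta_y$ onto $\mathbb{B}(y^{k,t+1}, r_\lambda)$ is, after the linear change of variables $z \mapsto y^{k,t+1} - z$, exactly the projection of $\bar v^{k,t}$ onto $\mathbb{B}(0, r_\lambda)$, one obtains $v^{k,t+1} = \Pi_{\mathbb{B}(0, r_\lambda)}\{\bar v^{k,t}\}$.

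Next I would invoke the known estimate $\|y_\lambda^{*}(x) - y^{*}(x)\| \le l_{f,0}/(\mu_g \lambda)$ from Kwon et al.\ \cite{kwon2023fully} (this is the same bound already used elsewhere in Section~\ref{sec:4up}). With $r_\lambda = l_{f,0}/(\mu_g \lambda)$ this says $v_k^{*} \in \mathbb{B}(0, r_\lambda)$, so $v_k^{*} = \Pi_{\mathbb{B}(0,r_\lambda)}\{v_k^{*}\}$. Applying the $1$-Lipschitz property of Euclidean projection onto the convex set $\mathbb{B}(0,r_\lambda)$ then yields
\[
    \|v^{k,t+1} - v_k^{*}\| = \bigl\|\Pi_{\mathbb{B}(0,r_\lambda)}\{\bar v^{k,t}\} - \Pi_{\mathbb{B}(0,r_\lambda)}\{v_k^{*}\}\bigr\| \le \|\bar v^{k,t} - v_k^{*}\|,
\]
which is \eqref{eq:pv}.

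The only real obstacle is verifying the change-of-variables identity cleanly and making sure that the projection radius $r_\lambda$ chosen in the algorithm matches the a priori bound on $\|v_k^{*}\|$; both are addressed above and are essentially bookkeeping. No smoothness, unbiasedness, or variance assumptions are actually used in the argument itself---Assumptions~\ref{assumption:nice_functions}--\ref{assumption:hessian_lipschitz_g} enter only through the auxiliary bound $\|v_k^{*}\| \le r_\lambda$.
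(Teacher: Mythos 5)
Your proof is correct and follows essentially the same route as the paper: both rewrite the projection step in the difference variable $v = y - z$ via $\bar v^{k,t} = y^{k,t+1} - (\bar z^{k,t} + \Delta_y)$, and both invoke $\|v_k^*\| \le r_\lambda$ from Lemma~\ref{lem:y_bd}. The only cosmetic difference is that you appeal directly to the non-expansiveness of Euclidean projection onto the convex set $\mathbb{B}(0,r_\lambda)$, whereas the paper splits into the cases $\|\bar v^{k,t}\| \le r_\lambda$ (no projection) and $\|\bar v^{k,t}\| > r_\lambda$ and applies its ad hoc Lemma~\ref{lemma:projection_shortened_distance}, which is just a hand-rolled special case of the same non-expansiveness fact.
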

Hence, it suffices to have $O(\epsilon^{-2})$ inner-loop iterations to make the bias in $v^k$ less than $O(\epsilon/\lambda)$, giving $O(\epsilon^{-4})$ upper bound with first-order smooth oracles. 

\begin{proposition}
    \label{prop:final_upper_bound}
    Under Assumptions \ref{assumption:nice_functions}-\ref{assumption:hessian_lipschitz_g}, \eqref{eq:unbiased_gradients}, \eqref{eq:bounded_variance_gradients},  \eqref{eq:mean_squared_lipschitz}, \eqref{eq:rate}, and $\gamma T > 1/\mu_g$,  the iterates of Algorithm \ref{algo:penalty_coupling_yz} satisfy:
    \ifarxiv
    \begin{align}
         \frac{\alpha}{n} \sum_{k=0}^{n-1} \E[ \lVert \nabla \mathcal{L}_{\lambda}^{*}(x^{k}) \rVert^{2} \leq 
    O(n^{-1}) + 
          \frac{\alpha}{n} \sum_{k=0}^{n-1} \Var(\hat{G}_k) +  O(\lambda^{-2}) +  O(\gamma).
    \end{align}
    \else
    \begin{align}
    \nonumber
         &\frac{\alpha}{n} \sum_{k=0}^{n-1} \E[ \lVert \nabla \mathcal{L}_{\lambda}^{*}(x^{k}) \rVert^{2} \\&\leq 
    O(n^{-1}) + 
          \frac{\alpha}{n} \sum_{k=0}^{n-1} \Var(\hat{G}_k) +  O(\lambda^{-2}) +  O(\gamma).
    \end{align}
    \fi
\end{proposition}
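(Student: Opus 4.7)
The plan is to view the outer loop as biased stochastic gradient descent on the smooth surrogate $\mL_\lambda^*$, whose gradient is Lipschitz with some constant $L$ independent of $\lambda$ by Lemma 3.1 of \cite{kwon2023fully}. Writing $x^{k+1} = x^k - \alpha \hat{G}_k$, applying the descent inequality, taking expectation conditional on the filtration $\mF_k$ generated up through the $k$-th outer iteration (using $\E[\hat{G}_k \mid \mF_k] = G_k$), and applying Young's inequality on the cross-term $\langle \nabla \mL_\lambda^*(x^k), G_k \rangle$ yields
\begin{align*}
\tfrac{\alpha}{4} \E\|\nabla \mL_\lambda^*(x^k)\|^2 \le \E[\mL_\lambda^*(x^k) - \mL_\lambda^*(x^{k+1})] + C\alpha\, \E\|\nabla \mL_\lambda^*(x^k) - G_k\|^2 + C\alpha\,\Var(\hat{G}_k),
\end{align*}
after invoking $\|\hat{G}_k\|^2 \le 2\|G_k\|^2 + 2\|\hat{G}_k - G_k\|^2$ together with $\|G_k\|^2 \le 2\|\nabla \mL_\lambda^*(x^k)\|^2 + 2\|\nabla \mL_\lambda^*(x^k) - G_k\|^2$ and using $L\alpha \ll 1$ to absorb the stationarity terms on the left. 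Telescoping $k = 0,\dots,n-1$, dividing by $n$, and using the lower-boundedness of $\mL_\lambda^*$ reduces the claim to establishing $\E\|\nabla \mL_\lambda^*(x^k) - G_k\|^2 = O(\gamma) + O(\lambda^{-2})$ uniformly in $k$.

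Squaring Lemma \ref{lemma:gradL_bias_control0} splits this mean-squared bias into three pieces: $l_y^2 \|y^{k+1} - y_\lambda^*(x^k)\|^2$, $\lambda^2 l_{g,1}^2 \|v^{k+1} - v^*(x^k)\|^2$, and $\bigl(l_{f,0} l_y / (\mu_g \lambda)\bigr)^2$, where the last piece immediately produces the stated $O(\lambda^{-2})$ term. For the $y$-error, the inner loop is constant-step SGD on the $\Theta(\mu_g)$-strongly convex function $\lambda^{-1} f(x^k,\cdot) + g(x^k,\cdot)$ with bounded-variance noise \eqref{eq:bounded_variance_gradients}; the projection onto $\mathbb{B}(\hat{y}(x^k), 2r/3)$ is non-expansive toward $y_\lambda^*(x^k)$ since the condition $\lambda \ge 6 l_{f,0}/(\mu_g r)$ forces $y_\lambda^*(x^k) \in \mathbb{B}(\hat{y}(x^k), r/2)$. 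Standard strongly-convex SGD analysis then gives $\E\|y^{k+1} - y_\lambda^*(x^k)\|^2 \le (1-\Omega(\gamma))^T \cdot O(1) + O(\gamma)$, which collapses to $O(\gamma)$ once $\gamma T > 1/\mu_g$. For the $v$-error, Proposition \ref{lemma:projection_smaller_v} guarantees the trust-region projection onto $\mathbb{B}(y^{k,t+1}, r_\lambda)$ does not increase $\|v - v^*(x^k)\|$, so a one-step strong-convexity contraction yields $\E\|v^{k,t+1} - v^*(x^k)\|^2 \le (1-\Omega(\gamma))\,\E\|v^{k,t} - v^*(x^k)\|^2 + \gamma^2 \Var(h_y^{k,t} - h_z^{k,t})$. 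The crucial observation is that sharing the sample $\xi^{k,t}$ across $h_y$ and $h_z$ together with the smoothness condition \eqref{eq:mean_squared_lipschitz} bounds this per-step variance by $\tilde l_{g,1}^2 \|y^{k,t} - z^{k,t}\|^2 = O(\tilde l_{g,1}^2 r_\lambda^2) = O(1/\lambda^2)$ via the trust-region radius $r_\lambda = O(1/\lambda)$. Unrolling this recursion over $T$ inner steps then gives $\lambda^2 \E\|v^{k+1} - v^*(x^k)\|^2 = O(\gamma)$.

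The main obstacle is coordinating the warm start with the inner-loop contraction: $y^{k,0}$ and $z^{k,0}$ are re-used from the previous outer iterate after the projections in lines 4--6, so the initial inner-loop errors depend on $\|x^k - x^{k-1}\| = O(\alpha)$ and therefore couple across outer iterations. One must verify that the geometric factor $(1-\gamma\mu_g)^T$ dominates this warm-start error so that it does not accumulate as $k$ grows, which is precisely why the condition $\gamma T > 1/\mu_g$ is imposed. Equally important is that the small-variance bound for $v$ relies on $y^{k,t}$ and $z^{k,t}$ staying mutually within $r_\lambda = O(1/\lambda)$, which is guaranteed by the shared-shift projection update $z^{k,t+1} \leftarrow \Pi_{\mathbb{B}(y^{k,t+1}, r_\lambda)}\{\bar z^{k,t} + \Delta_y\}$ — without this coupling the per-step variance would revert to $O(\sigma^2)$ and the rate would degrade back to the $O(\epsilon^{-6})$ regime of Section \ref{sec:6up}. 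Assembling these ingredients and substituting into the telescoped descent inequality yields the claimed bound.
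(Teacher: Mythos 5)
Your high-level decomposition matches the paper's: descend on $\mL_\lambda^*$, split $\|\nabla\mL_\lambda^*(x^k)-G_k\|$ into the three pieces of Lemma \ref{lemma:gradL_bias_control0}, control the $y$-piece by strongly-convex SGD within the trust region, and control the $v$-piece via Proposition \ref{lemma:projection_smaller_v} together with the shared-$\xi$ variance reduction. However, there is a genuine gap in how you close the descent recursion.

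In your first display you bound the quadratic term of the smoothness inequality via $\|\hat{G}_k\|^2 \le 2\|G_k\|^2 + 2\|\hat{G}_k-G_k\|^2$ and then $\|G_k\|^2 \le 2\|\nabla\mL_\lambda^*(x^k)\|^2 + 2\|\nabla\mL_\lambda^*(x^k)-G_k\|^2$, which eliminates $\|\hat{G}_k\|^2$ from the inequality entirely. That is exactly the wrong thing to do here, because the warm-start term $\|y_\lambda^*(x^{k+1})-y_\lambda^*(x^k)\|^2 \lesssim \alpha^2\|\hat{G}_k\|^2$ (and its analogue for $v^*$, cf.\ Lemma \ref{lemma:v_star_contraction}) enters the recursions for $\I_k$ and $\W_k$ with a \emph{positive} coefficient $\Theta(\alpha^2)\|\hat{G}_k\|^2$. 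Since the final choice is $\alpha=\Theta(1)$, these are not small, and the geometric factor $(1-\mu_g\gamma/2)^T$ you invoke is only a constant $<1$ (roughly $e^{-1/2}$ under $\gamma T>1/\mu_g$) — it makes the warm-start contributions summable but does not make each $\E\|\nabla\mL_\lambda^*(x^k)-G_k\|^2$ uniformly $O(\gamma)+O(\lambda^{-2})$, since each carries a non-negligible $O(\alpha^2\|\hat{G}_{k-1}\|^2)$ piece. Consequently your reduction "it suffices to show $\E\|\nabla\mL_\lambda^*(x^k)-G_k\|^2 = O(\gamma)+O(\lambda^{-2})$ uniformly in $k$" is a false intermediate claim, and "assembling these ingredients" does not yield the proposition.

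The paper's proof (Proposition \ref{prop:upl} together with Proposition \ref{prop:e4}) deliberately \emph{retains} the negative $-\frac{\alpha}{4}\|\hat{G}_k\|^2$ term when rearranging the smoothness inequality. After telescoping, the cumulative warm-start contribution $O(\alpha^3)\sum_k\|\hat{G}_k\|^2$ arising from \eqref{eq:2ii} and \eqref{eq:2ww} is absorbed against $-\frac{\alpha}{4}\sum_k\|\hat{G}_k\|^2$ whenever $-\frac14 + C_1\alpha^2 < 0$ (condition \eqref{eq:1e4}); that cancellation is precisely what lets the remaining terms collapse to $O(n^{-1}) + \frac{\alpha}{n}\sum_k\Var(\hat{G}_k) + O(\lambda^{-2}) + O(\gamma)$. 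You identify the coupling in words but provide no absorption mechanism: to repair the argument you must either keep the negative $\|\hat{G}_k\|^2$ term as the paper does, or run the self-bounding averaging argument (re-expanding $\|\hat{G}_k\|^2$ in terms of $\|\nabla\mL_\lambda^*(x^k)\|^2$, the bias, and the variance, and moving the resulting $O(\alpha^3)\|\nabla\mL_\lambda^*(x^k)\|^2$ term to the left) — either way an explicit smallness condition on $\alpha$ is needed, and the "uniform bias" claim must be replaced by an on-average bound that couples to the objective.
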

Lastly, we obtain the tighter estimation of $\Var(\hat{G}_k) = O(M^{-1})$ shown in Lemma~\ref{lemma:Lgrad_variance}. Choosing 
$T \asymp \epsilon^{-2}$, $M \asymp \epsilon^{-2}$, and $K \asymp \epsilon^{-2}$,
we conclude the theorem.



\section{Lower Bounds}
\label{section:lower_bound}

In our lower bound construction, we mainly focus on the iteration complexity dependence on $\epsilon$, and we consider a subset of the function class in which smoothness parameters are absolute constants: 
\ifarxiv
\begin{align}
    \mathcal{F}(1) := &\{(f,g) \text{ satisfy Assumptions \ref{assumption:nice_functions}, \ref{assumption:hessian_lipschitz_g}} \ | \ l_{f,0}, l_{f,1}, \mu_g, l_{g,1}, l_{g,2} = O(1) \}. \label{eq:const_condition_function_class}
\end{align}
\else
\begin{align}
    \mathcal{F}(1) := &\{(f,g) \text{ satisfy Assumptions \ref{assumption:nice_functions}, \nonumber \ref{assumption:hessian_lipschitz_g}} \ | \\
    &\qquad \ l_{f,0}, l_{f,1}, \mu_g, l_{g,1}, l_{g,2} = O(1) \}. \label{eq:const_condition_function_class}
\end{align}
\fi
Throughout the section, we consider the smoothness parameters as $O(1)$ quantities and assume that $\epsilon > 0$ is sufficiently small such that $\epsilon^{-1}$ dominates any polynomial factors of smoothness parameters. Before we proceed, we describe additional preliminaries for the lower bound.

\paragraph{Algorithm Class.} We consider algorithms that access an unknown pair of functions $(f,g)$ via a first-order stochastic oracle $\mathrm{O}$. Following the black-box optimization model \cite{nemirovskij1983problem}, we consider an algorithm $\mathtt{A} \in \mA$ {\it paired} with $\mathrm{O} \in \mO(N, \sigma^2, \tilde{l}_{g,1}, r_{\epsilon})$ where $\tilde{l}_{g,1} \ge 100$ and $r_\epsilon := 100 \epsilon$. At any iteration step $t \in \mathbb{N}$, $\mathtt{A}$ takes the first $(t-1)$ oracle responses and generates the next (randomized) query points:
\ifarxiv
\begin{align*}
    (\bm{x}^{t}, \bm{y}^{t}) = \mathtt{A} &\big( \xi_{\mathrm{A}}, (\bm{x}^0, \bm{y}^0), \mathrm{O}(\bm{x}^0, \bm{y}^0; \zeta^0, \xi^0), ..., (\bm{x}^{t-1}, \bm{y}^{t-1}), \mathrm{O}(\bm{x}^{t-1}, \bm{y}^{t-1}; \zeta^{t-1}, \xi^{t-1}) \big),
\end{align*}
\else
\begin{align*}
    (\bm{x}^{t}, \bm{y}^{t}) = \mathtt{A} &\big( \xi_{\mathrm{A}}, (\bm{x}^0, \bm{y}^0), \mathrm{O}(\bm{x}^0, \bm{y}^0; \zeta^0, \xi^0), ..., \\
    & (\bm{x}^{t-1}, \bm{y}^{t-1}), \mathrm{O}(\bm{x}^{t-1}, \bm{y}^{t-1}; \zeta^{t-1}, \xi^{t-1}) \big),
\end{align*}
\fi
where $\xi_{\mathrm{A}}$ is a random seed generated at the beginning of the optimization procedure. For simplicity, we assume that $\sigma_f = \sigma_g = \sigma$ in this section.

\paragraph{Zero-Respecting Algorithms.} An important subclass of randomized algorithm class $\mA$ is the zero-respecting algorithm class $\mA_{\textrm{zr}}$ which preserves the support of gradients with respect to $x$ at the queried points:

\begin{definition}
    Suppose $\hat{\nabla}_x f = 0$. An algorithm $\mathtt{A}$ is zero-respecting if for all $t \ge 0$ and $n \in [N]$ generated by $\mathtt{A}$, 
    \begin{align*}
        \supp(x^{t,n}) \subseteq \textstyle &\bigcup_{t'<t, n' \in [N]} \supp \left( \hat{\nabla}_x g(x^{t',n'}, y^{t',n'}; \xi^{t'}) \right).
    \end{align*}
\end{definition}
Zero-respecting algorithm class plays a critical role in dimension-free lower-bound arguments, as we can construct a family of hard instances for all randomized black-box algorithms from one hard example  (see Section \ref{subsec:conversion_to_randomized}).

\paragraph{Probabilistic Zero-Chains.} At the core of the lower-bound construction is the construction of a hard example for all zero-respecting algorithms. To study the iteration complexity of a general randomized algorithm class, \cite{arjevani2023lower} introduced the notion of {\it progress} defined as the following:
\begin{align}
    \label{eq:def_progress}
    \prog_{\alpha}(x) := \max\{ i \ge 0 | |x_i| > \alpha \}, \text{ ($x_0 \equiv 1$)}.
\end{align}
Here, $\alpha \in [0,1)$ controls the effective threshold of values that would be considered as zero. We also adopt the notion of probabilistic zero-chains from \cite{arjevani2023lower}:
\begin{definition}
    A function $g(x, y)$, paired with gradient estimators $\hat{\nabla} g(x,y; \xi)$ with an independent random variable $\xi$, is a probability-$p$ zero-chain in $x$ if there exists $\alpha > 0$ such that for all $(x, y)$:
    \begin{align}
        &\PP(\prog_0 (\hat{\nabla}_x g(x,y; \xi)) > \prog_{\alpha} (x) + 1) = 0, \nonumber \\
        &\PP(\prog_0 (\hat{\nabla}_x g(x,y; \xi)) = \prog_{\alpha} (x) + 1) \le p. \label{eq:prob_zero_chain_cond}
    \end{align}
\end{definition}

\subsection{Hard Instance}
We start with the base construction from \cite{carmon2020lower} for single-level nonconvex optimization:
\begin{align}
    F(x) &= \epsilon^2 \tssum_{i=1}^{d_x} f_i(x), \nonumber \\
    f_i(x) &:= \Psi_{\epsilon}(x_{i-1}) \Phi_{\epsilon}(x_{i}) - \Psi_{\epsilon}(-x_{i-1}) \Phi_{\epsilon}(-x_{i}), \label{eq:chain_basic}
\end{align}
where $d_x = \floor{\epsilon^{-2}}$, $x_0 \equiv \epsilon$, and $\Psi(x), \Phi(x)$ are a scalar function defined as the following:
\begin{align}
    &\Psi(t) := \begin{cases}
        0, & \text{if $t \le 1/2$} \\
        \exp\left(1 - \frac{1}{(2t-1)^2}\right), & \text{otherwise}
    \end{cases}, \nonumber \\
    &\Phi(t) := \sqrt{e} \int^{t}_{-\infty} e^{-\tau^2/2} d\tau \label{eq:def_chain_scaler_functions},
\end{align}
$\Psi_s(t), \Phi_s(t)$ are short-hands of $\Psi(t/s), \Phi(t/s)$. The above construction satisfies the required smoothness and initial value-gap $F(0) - \inf_x F(x) = O(1)$. We set the hyperobjective to be defined as \eqref{eq:chain_basic}, but the progression of an algorithm is slowed down when the $\grad F(x)$ is only indirectly accessed via $\hat{\grad}_x g(x, y; \xi)$ for $\|y - y^*(x)\| \le r_{\epsilon}$.

To set up the Bilevel objectives, we let $f(x,y) =y$ and $g(x,y) = (y - F(x))^2$ as in \eqref{eq:bilevel_hard_instance}. It is straight-forward to see that $y^*(x) = F(x)$ and $f(x, y^*(x)) = F(x)$. Next, we design the expectation of gradient estimators that the oracle returns. To simplify discussion, let $\hat{\grad} f$ are deterministic, {\it i.e.,} $\hat{\grad} f(x,y;\zeta) = \grad f(x,y)$ with probability 1. We define the stochastic gradients to be 
\begin{align}
    \Exs[\hat{\grad}_x g(x,y; \xi)] = \grad_x \left( r_{\epsilon}^2 \cdot \phi\left(\frac{y - F(x)}{r_{\epsilon}}\right)^2 \right),  \label{eq:biased_gradient}
\end{align}
where $\phi(t)$ is a smooth clipping function:
\begin{align}
    \phi(t) := \begin{cases}
        t, & \text{ if } |t| \le 1/2, \\
        t - \frac{1}{e} \int_{1/2}^{t} \Psi(\tau) d\tau, & \text{ else if } t > 1/2 \\    
        t + \frac{1}{e} \int_{1/2}^{-t} \Psi(\tau) d\tau, & \text{ else }
    \end{cases}. \label{eq:smooth_clipping_def}
\end{align}
With the above construction, gradient estimators are unbiased for $|y-y^*(x)| \le r_{\epsilon}$. Let $\grad_x g_b(x,y)$ be the short-hand of \eqref{eq:biased_gradient}.

Next, we design the coordinate-wise gradient estimators of $g$ w.r.t $x$ as the following. Let $\xi \sim \Ber(p)$, and
\begin{align*}
    &\hat{\grad}_{x_i} g(x,y; \xi) = \grad_{x_i} g_b(x,y) \cdot \left(1 + h_i(x) (\xi/p - 1) \right),   
\end{align*}
where $h_i(x)$ is a smooth version of the indicator function $\indic{i > \prog_{\epsilon/4}(x)}$ (see details in Appendix, Lemma \ref{lemma:smooth_indicator}). Finally, we design stochastic gradients w.r.t $y$ and $\hat{y}$:
\begin{align*}
    \hat{\nabla}_y g(x,y; \xi) &= 2 \left(y - \epsilon^2 \cdot \tssum_{i=1}^{d_x} f_i(x) \left(1 + h_i(x) (\xi/p - 1)\right)\right), \\
    \hat{y}(x) &= \epsilon^2 \tssum_{i=1}^{\prog_{\epsilon/2}(x)} f_i(x).
\end{align*}
The next step is to show the lower complexity bounds of ``activating'' the last coordinate of $x$ for zero-respecting algorithms. We note that additional noises in $\hat{\grad}_y g$ and $\hat{y}$ are only required when extending to randomized algorithms, and $\hat{\grad}_y g$, $\hat{y}$ satisfy the following:
\begin{lemma}
    \label{lemma:verify_y_gradient}
    There exists some absolute constant $c > 0$ such that for $x \in \mathbb{R}^{d_x}, y \in \mathbb{R}$:
    \begin{align*}
        &\texttt{Var} \left(\hat{\nabla}_y g(x,y;\xi) \right) \lesssim \frac{\epsilon^4}{p} \lesssim \sigma^2, \\
        &\Exs[\|\hat{\grad}_{yy}^2 g(x,y;\xi)\|^2] \le 2. 
    \end{align*}
    Furthermore, it holds that $\Exs[\hat{\grad}_y g(x,y;\xi)] = \grad_y g(x,y)$ and $|\hat{y}(x) - F(x)| = O(\epsilon^2) \ll r_{\epsilon}$ for all $x \in \mathbb{R}^{d_x}$ and $y \in \mathbb{R}$.
\end{lemma}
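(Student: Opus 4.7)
The plan is to verify the four claims of Lemma \ref{lemma:verify_y_gradient} separately. Unbiasedness, the Hessian bound, and the $\hat y$-accuracy each reduce to a short direct computation; only the variance bound requires any subtlety, and that subtlety is essentially a support argument.

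For unbiasedness of $\hat\nabla_y g$, I would expand the definition and use $\Exs[\xi/p - 1] = 0$; this recovers $2(y - \epsilon^2\tssum_i f_i(x)) = 2(y - F(x)) = \grad_y g(x,y)$. For the Hessian bound, $\hat\nabla_y g(x,\cdot\,;\xi)$ is affine in $y$ with slope $2$ independent of $\xi$, so $\hat\nabla_{yy}^2 g(x,y;\xi) \equiv 2$ as a (scalar) Hessian, which yields an absolute-constant upper bound on $\Exs\bigl[\|\hat\nabla_{yy}^2 g\|^2\bigr]$.

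For the $\hat y$-accuracy claim, I would write
\[
\hat y(x) - F(x) = -\epsilon^2 \tssum_{i=\prog_{\epsilon/2}(x)+1}^{d_x} f_i(x),
\]
and then observe that for any $i \ge \prog_{\epsilon/2}(x) + 2$ the coordinate $x_{i-1}$ satisfies $|x_{i-1}| \le \epsilon/2$, so by the definition of $\Psi$ in \eqref{eq:def_chain_scaler_functions} both $\Psi_\epsilon(x_{i-1})$ and $\Psi_\epsilon(-x_{i-1})$ vanish, and hence $f_i(x) = 0$. Only the single index $i = \prog_{\epsilon/2}(x) + 1$ can contribute, and since $|f_i|$ is uniformly $O(1)$, we obtain $|\hat y(x) - F(x)| = O(\epsilon^2) \ll r_\epsilon = 100\epsilon$ for small $\epsilon$.

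The main step is the variance bound. Because the same Bernoulli $\xi \sim \Ber(p)$ multiplies every term of the sum, I would pull $\xi/p - 1$ outside to get
\[
\Var\bigl(\hat\nabla_y g(x,y;\xi)\bigr) = 4\epsilon^4\Bigl(\tssum_i f_i(x)\, h_i(x)\Bigr)^2 \Var(\xi/p - 1) \le \frac{4\epsilon^4}{p}\Bigl(\tssum_i f_i(x)\, h_i(x)\Bigr)^2,
\]
so the entire task reduces to bounding $\tssum_i f_i(x)\, h_i(x) = O(1)$. This is where the one genuine obstacle lies: a priori there are $d_x = \Theta(\epsilon^{-2})$ summands. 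I would resolve it by a support argument: the cutoff built into $\Psi_\epsilon$ forces $f_i(x) \neq 0$ only when $|x_{i-1}| > \epsilon/2$, i.e.\ $i \le \prog_{\epsilon/2}(x) + 1$; the construction of the smooth indicator $h_i$ forces $h_i(x) \neq 0$ only when $i > \prog_{\epsilon/4}(x) \ge \prog_{\epsilon/2}(x)$. These two ranges intersect in at most a single index, so the sum collapses to one $O(1)$ term. The resulting bound $\Var \lesssim \epsilon^4/p$ is at most $\sigma^2$ under the calibration $p \asymp \epsilon^4/\sigma^2$ used later in the lower-bound construction.
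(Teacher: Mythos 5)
Your proof is correct and uses essentially the same approach as the paper's: the core in both is the support argument showing that $\tssum_i f_i(x) h_i(x)$ collapses to a single index $i = \prog_{\epsilon/2}(x)+1$ (via the $\Psi_\epsilon$ cutoff on $f_i$ and the indicator sandwich on $h_i$), which the paper imports from Lemma~\ref{lemma:smooth_indicator} and you re-derive directly. One small slip: you state that $h_i(x)\neq 0$ forces $i > \prog_{\epsilon/4}(x)$, which reads the sandwich $\indic{i > \prog_{\epsilon/4}(x)} \le h_i(x) \le \indic{i > \prog_{\epsilon/2}(x)}$ in the wrong direction; the vanishing condition comes from the \emph{upper} bound, giving $h_i(x)\neq 0 \Rightarrow i > \prog_{\epsilon/2}(x)$, but your final conclusion (the intersection with $\{i : f_i(x) \neq 0\}\subseteq\{i \le \prog_{\epsilon/2}(x)+1\}$ has at most one element) holds either way.
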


\subsection{Lower Bounds for Zero-Respecting Algorithms}
We start by showing that the construction $(f,g)$ with the oracle $\hat{\nabla} g(x,y;\xi)$ forms a probabilistic zero-chain. The intuition behind the probabilistic zero-chain argument is to amplify the required number of iterations to progress toward the next coordinate from $1$ to $O(1/p)$, which gives the overall lower bound of $\Omega\left(d_x / p\right)$. Thus, as we set $p$ smaller, the more iterations all zero-respecting algorithms would need to reach the last coordinate of $x$. The minimum possible value of $p$ is decided by \eqref{eq:bounded_variance_gradients} on bounded variances:
\begin{align*}
    \texttt{Var} \left(\hat{\nabla}_x g(x,y;\xi) \right) \lesssim \frac{\|\grad_x g_b (x,y)\|_{\infty}^2}{p} \lesssim \sigma^2,
\end{align*}
and the smoothness condition \eqref{eq:mean_squared_lipschitz}:
\begin{align*}
    \Exs[\|\hat{\grad}_{xy}^2 g(x,y; \xi) \|^2] \lesssim \|\grad F(x)\|^2 + \frac{\|\grad F(x)\|_{\infty}^2}{p} \lesssim \tilde{l}_{g,1}^2.
\end{align*}
The crucial feature of our construction is the upper bound on $\|\grad_x  g_b (x,y)\|_{\infty}$, which is much smaller than the single-level optimization case:
\begin{lemma}
    \label{lemma:probabilistic_zerochain}
    There exists some absolute constant $c > 0$ such that for all $x \in \mathbb{R}^{d_x}, y \in \mathbb{R}$,
    \begin{align*}
        &\|\grad_x  g_b (x,y)\|_{\infty} \le c r_\epsilon \epsilon = O(\epsilon^2), \\
        &\|\grad F(x)\|_{\infty} \le c \epsilon = O(\epsilon). 
    \end{align*}
    Therefore, \eqref{eq:prob_zero_chain_cond} holds with $p = \max\left(\epsilon^4/\sigma^2, \epsilon^2/\tilde{l}_{g,1}^2\right)$ and $\alpha = \epsilon/4$. 
\end{lemma}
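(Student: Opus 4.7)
The plan is to first establish the two $\ell_\infty$-bounds on $\grad F$ and $\grad_x g_b$ using the structure of the hard instance in \eqref{eq:chain_basic} and \eqref{eq:biased_gradient}, and then to deduce the probabilistic zero-chain property with the claimed value of $p$ by combining these bounds with the variance and smoothness constraints on the oracle.

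For $\|\grad F(x)\|_\infty$, I would use the chain structure \eqref{eq:chain_basic}: each partial $\partial_{x_i} F$ involves only the adjacent summands $f_i, f_{i+1}$, and because of the $\epsilon$-rescaling in $\Psi_\epsilon, \Phi_\epsilon$, every derivative $\Psi_\epsilon', \Phi_\epsilon'$ carries an extra factor of $\epsilon^{-1}$ while $\Psi_\epsilon, \Phi_\epsilon$ themselves remain $O(1)$. Combined with the global $\epsilon^2$ prefactor, this gives $|\partial_{x_i} F(x)| = O(\epsilon)$ uniformly in $i$. For $\|\grad_x g_b\|_\infty$, applying the chain rule to \eqref{eq:biased_gradient} yields
\[
  \grad_x g_b(x,y) = -2 r_\epsilon\, \phi\bigl((y - F(x))/r_\epsilon\bigr)\, \phi'\bigl((y - F(x))/r_\epsilon\bigr)\, \grad F(x).
\]
From \eqref{eq:smooth_clipping_def}, $\phi'(t) \in [0,1]$ and $\phi$ is uniformly bounded on $\mathbb{R}$ (since $\phi'(t)$ decays super-polynomially as $|t| \to \infty$). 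Hence $\|\grad_x g_b\|_\infty \le c\, r_\epsilon \|\grad F\|_\infty = O(\epsilon^2)$, as claimed.

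For the zero-chain property, let $j = \prog_{\epsilon/4}(x)$. Since $\Psi$ is supported on $[1/2, \infty)$, both $\Psi_\epsilon(\pm t)$ and $\Psi_\epsilon'(\pm t)$ vanish whenever $|t| \le \epsilon/2$. The expression for $\partial_{x_i} F$ involves only $\Psi_\epsilon(\pm x_{i-1}), \Psi_\epsilon'(\pm x_i), \Phi_\epsilon(\pm x_{i+1}), \Phi_\epsilon'(\pm x_i)$, so $\partial_{x_i} F(x) = 0$ whenever both $|x_{i-1}|, |x_i| \le \epsilon/2$. For $i \ge j+2$ we have $|x_{i-1}|, |x_i| \le \epsilon/4 < \epsilon/2$, so $\partial_{x_i} g_b = 0$ and hence $\hat{\nabla}_{x_i} g = 0$ almost surely, giving the first part of \eqref{eq:prob_zero_chain_cond}. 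For $i = j+1$, the smooth indicator $h_i(x) \approx 1$ by construction, so the noise multiplier $(1 + h_i(x)(\xi/p - 1))$ reduces essentially to $\xi/p$, which vanishes with probability $1-p$; thus coordinate $j+1$ is revealed with probability at most $p$. For $i \le j$, $h_i(x) \approx 0$ and the oracle output coincides with the deterministic $\partial_{x_i} g_b$, so no new coordinate is exposed.

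Finally, to identify the minimum admissible $p$, note that computing $\texttt{Var}(\hat{\nabla}_x g(x,y;\xi))$ coordinate-wise, only $i = j+1$ contributes nontrivially, giving $\texttt{Var}(\hat{\nabla}_x g) \lesssim \|\grad_x g_b\|_\infty^2 / p \lesssim \epsilon^4/p$, which must be $\le \sigma^2$, hence $p \ge \epsilon^4/\sigma^2$. For the smoothness condition \eqref{eq:mean_squared_lipschitz}, differentiating $\hat{\nabla}_x g$ with respect to $y$ cancels one factor of $r_\epsilon$, so $\Exs[\|\hat{\nabla}_{xy}^2 g\|^2] \lesssim \|\grad F\|_\infty^2 / p \lesssim \epsilon^2/p \le \tilde{l}_{g,1}^2$, yielding $p \ge \epsilon^2/\tilde{l}_{g,1}^2$. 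Taking the larger of these two lower bounds recovers the stated $p = \max(\epsilon^4/\sigma^2, \epsilon^2/\tilde{l}_{g,1}^2)$. The main technical hurdle is careful tracking of the $\epsilon$-factors through the smooth clipping $\phi$: the extra factor of $r_\epsilon$ picked up by squaring the clipped quantity in \eqref{eq:biased_gradient} is precisely what enables $p = O(\epsilon^4)$ instead of the $p = O(\epsilon^2)$ in the single-level construction of \cite{arjevani2023lower}, and this is the direct source of the $\Omega(\epsilon^{-6})$ bilevel slowdown.
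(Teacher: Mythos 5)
Your proposal is correct and takes essentially the same route as the paper: write out $\grad_x g_b = -2 r_\epsilon\, \phi\, \phi'\, \grad F$, bound $\|\grad F\|_\infty = O(\epsilon)$ via the $\epsilon$-rescaling of $\Psi,\Phi$ (the paper instead cites Lemma~\ref{lemma:F_large_coordinates}, which records exactly this), use the vanishing support of $\Psi$ to confine nonzero entries to indices $\le \prog_{\epsilon/2}(x)+1$, and read off the lower bound on $p$ from the variance constraint $\|\grad_x g_b\|_\infty^2/p \lesssim \sigma^2$ and the smoothness constraint $\|\grad F\|_\infty^2/p \lesssim \tilde{l}_{g,1}^2$. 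Two small inaccuracies worth flagging, neither affecting correctness: the single noisy coordinate is $\prog_{\epsilon/2}(x)+1$ rather than $\prog_{\epsilon/4}(x)+1$ (they may differ, though $\|\grad_x g_b\|_\infty$ dominates either way, and for $i$ strictly between $\prog_{\epsilon/2}(x)$ and $\prog_{\epsilon/4}(x)$ you may have $h_i\in(0,1)$ rather than $h_i\approx 0$, but those indices are $\le\prog_\alpha(x)$ so irrelevant to the zero-chain condition); and $\phi'(t)$ decays only polynomially, not super-polynomially, as $|t|\to\infty$, though the boundedness of $\phi$ (the content of Lemma~\ref{lemma:auxiliary_clipping}) is all that is needed.
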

Note that the scenario only with bounded-variance can be explained by setting $\tilde{l}_{g,1} = \infty$. Thus, with $d_x=O(\epsilon^{-2})$, we obtain the lower bound of $\Omega(\epsilon^{-6}), \Omega(\epsilon^{-4})$ for all zero-respecting algorithms without and with the stochastic smoothness assumption, respectively.

\subsection{Lower Bounds for Randomized Algorithms}
\label{subsec:conversion_to_randomized}
To convert the lower bound for all zero-respecting algorithms to randomized algorithm classes, our construction can adopt the ``randomized coordinate-embedding'' argument from \cite{carmon2020lower}. We define a class of hard instances for randomized algorithms:
\begin{align}
    f_U(x,y) &:= y + \frac{1}{10} \|x\|^2, \nonumber \\
    g_U(x,y) &:= (y - F(U^\top \rho(x))^2, \label{eq:final_lower_bound_instance}
\end{align}
where $U$ is a random orthonormal matrix sampled from $\texttt{Ortho}(d, d_x) := \{U \in \mathbb{R}^{d \times d_x} | U^\top U = I\}$ with $d \gg d_x$ (with a slight abuse in notation, the true dimension of $x$ is $d$ instead of $d_x$), and 
\begin{align}
    \rho(x) := x/\sqrt{1 + \|x\|^2 / R^2},
\end{align}
where $R = 250 \epsilon \sqrt{d_x}$. Then we consider the family of hard instances as the following:  
\ifarxiv
\begin{align}
    \mathcal{F}_{\texttt{hard}} := \{(f_U, g_U) | &U \in \texttt{Ortho} (d, d_x),  f_U, g_U \text{ defined in \eqref{eq:final_lower_bound_instance}} \}. \label{eq:final_hard_family}
\end{align}
\else
\begin{align}
    \mathcal{F}_{\texttt{hard}} := \{(f_U, g_U) | &U \in \texttt{Ortho} (d, d_x), \nonumber \\ 
    & f_U, g_U \text{ defined in \eqref{eq:final_lower_bound_instance}} \}. \label{eq:final_hard_family}
\end{align}
\fi
Intuition for the above construction is that for every randomized algorithm $\mathtt{A} \in \mA$, if we select the low-dimensional embedding $U$ uniformly randomly from a unit sphere, the sequence of query points generated by $\mathtt{A}$ behaves as if it is generated by a zero-respecting algorithm in the embedding space. 
The corresponding stochastic gradient estimator is now given by 
\begin{align}
    &\hat{\nabla}_x g_U(x,y; \xi) = J(x)^\top U \hat{\nabla}_x g (U^\top \rho(x), y;\xi), \nonumber \\
    &\hat{\nabla}_y g_U(x,y; \xi) = \hat{\nabla}_y g (U^\top \rho(x), y;\xi), \nonumber \\
    &\hat{y}_U (x) = \hat{y}(U^\top \rho(x)).
    \label{eq:final_stochastic_gradients}
\end{align}
where $J(x)\in \mathbb{R}^{d\times d}$ is a Jacobian of $\rho(x)$. As the remaining steps follow the same arguments in \cite{arjevani2023lower}, we conclude our lower bound for the randomized algorithms with the oracle model in Definition \ref{def:ystart_aware}:
\begin{theorem}
    \label{theorem:final_lower_bound}
    Let $K$ be the minimax oracle complexity for the function class $\mF(1)$ and oracle class $\mO(N, \sigma^2, \tilde{l}_{g,1}^2, r_{\epsilon})$ where $\tilde{l}_{g,1} \ge 100$ and $r_{\epsilon} = 100\epsilon$:
    \begin{align*}
        K := \inf_{\mA} \sup_{\substack{\mathcal{F}(1) \\ \mO(N, \sigma^2, \tilde{l}_{g,1}, r_{\epsilon})}} \inf \left\{K \in \mathbb{N} | \Exs[\|\grad F(x^{K,1}) \|] \le \epsilon \right\}.
    \end{align*}
    Then the minimax oracle complexity must be at least:
    \begin{align*}
        K \gtrsim \min\left( \frac{\sigma^2}{\epsilon^6}, \frac{\tilde{l}_{g,1}^2}{\epsilon^4} \right).
    \end{align*}
\end{theorem}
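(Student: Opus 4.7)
The plan is to follow the standard template of stochastic nonconvex lower bounds: first prove the bound for the zero-respecting subclass $\mA_{\textrm{zr}}$ on the explicit instance $(f,g)$ constructed in \eqref{eq:bilevel_hard_instance}--\eqref{eq:biased_gradient}, and then lift it to all randomized algorithms via the random orthonormal embedding $U$ producing $(f_U, g_U)$ in \eqref{eq:final_lower_bound_instance}. The core mechanism is that the stochastic oracle exposes a new coordinate of $x$ only with a small probability $p$, so traversing all $d_x = \lfloor \epsilon^{-2} \rfloor$ coordinates of the Carmon-style chain takes $\Omega(d_x / p)$ queries; and until the final coordinate is activated, the underlying hyperobjective still satisfies $\|\nabla F(x)\| = \Omega(\epsilon)$.

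For the zero-respecting step, I would first invoke Lemma \ref{lemma:probabilistic_zerochain} to conclude that $g$ is a probability-$p$ zero-chain in $x$ with $p = \max(\epsilon^4/\sigma^2, \epsilon^2/\tilde{l}_{g,1}^2)$, and combine it with Lemma \ref{lemma:verify_y_gradient} to check that the full tuple $(\hat{\nabla}f, \hat{\nabla}g, \hat{y})$ sits inside $\mO(N,\sigma^2,\tilde{l}_{g,1}, r_\epsilon)$. Then, writing $\tau_i$ for the first iteration at which some query $(x^{t,n}, y^{t,n})$ satisfies $\prog_{\epsilon/4}(x^{t,n}) \ge i$, the zero-chain property implies that conditional on the past, $\tau_{i+1}-\tau_i$ stochastically dominates a geometric random variable with parameter at most $N p$, so a Chernoff bound over $i=1,\dots,d_x$ gives $\tau_{d_x} \gtrsim d_x / (Np)$ with constant probability. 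Multiplying back the per-iteration cost and applying the standard gradient lower bound $\|\nabla F(x)\| > \epsilon$ for $\prog_{\epsilon/4}(x)<d_x$ from \cite{carmon2020lower} yields $K \gtrsim d_x / p = \min(\sigma^2/\epsilon^6, \tilde{l}_{g,1}^2/\epsilon^4)$ for every zero-respecting $\mathtt{A}$.

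To extend to general randomized algorithms, I would pass to the family $\mathcal{F}_{\texttt{hard}}$ in \eqref{eq:final_hard_family} with $U$ drawn uniformly from $\texttt{Ortho}(d, d_x)$ for $d = \poly(\epsilon^{-1})$ sufficiently large. The bounded remapping $\rho$ keeps iterates in a ball of radius $R$, the quadratic regularizer $\tfrac{1}{10}\|x\|^2$ in $f_U$ restores global upper-level smoothness, and the orthonormality of $U$ ensures that $(f_U, g_U) \in \mF(1)$ and that the pushed-forward oracle in \eqref{eq:final_stochastic_gradients} still belongs to $\mO(N,\sigma^2,\tilde{l}_{g,1},r_\epsilon)$ with the same absolute constants. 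The standard orthogonal-invariance argument (Lemma 2 of \cite{carmon2020lower}, reused in \cite{arjevani2023lower}) then shows that for a uniformly random $U$, with probability $1-\exp(-d^{\Omega(1)})$ the inner products of the first $\poly(\epsilon^{-1})$ queries with the unseen columns of $U$ are below $\epsilon/4$; on this event the induced trajectory in the embedding basis behaves zero-respectingly, and the previous paragraph applies.

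The main technical obstacle is the verification in the embedding step that the oracle constants transfer without picking up $\epsilon$-dependent factors. Concretely, one must bound the Jacobian $J(x)$ of $\rho$ uniformly so that the variance and mean-squared-Lipschitz constants of $\hat{\nabla}_x g_U$ remain $\sigma^2$ and $\tilde{l}_{g,1}^2$ respectively; ensure the $y^*$-aware window radius $r_\epsilon = 100\epsilon$ is compatible with the $|\hat{y}_U(x) - y_U^*(x)| = O(\epsilon^2)$ guarantee coming from Lemma \ref{lemma:verify_y_gradient} so that Definition \ref{def:ystart_aware} is satisfied; and confirm that the smooth clipping $\phi$ in \eqref{eq:smooth_clipping_def} together with the smoothed indicator $h_i$ is compatible with the $y^*$-aware restriction that gradients need only be unbiased inside $\mathbb{B}(y^*(x), r_\epsilon)$. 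Once these checks are in place, the slowdown by a factor $1/p$ and the conversion to randomized algorithms follow the arguments in \cite{arjevani2023lower} with only cosmetic modifications.
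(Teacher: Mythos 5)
Your proposal is correct and follows essentially the same route as the paper: verify the probabilistic zero-chain property via Lemma \ref{lemma:probabilistic_zerochain}, check the oracle tuple lies in $\mO(N,\sigma^2,\tilde{l}_{g,1},r_\epsilon)$ via Lemma \ref{lemma:verify_y_gradient}, and lift to randomized algorithms through the orthonormal embedding $U$ and $\rho$-remapping while confirming the smoothness, variance, and $y^*$-aware window constants transfer. The only cosmetic difference is that you unpack the stopping-time/geometric-domination argument explicitly (it is packaged inside the paper's Lemma \ref{lemma:to_rand_algs}), and your ``parameter at most $Np$'' should really be $p$ since the batch shares a single coin $\xi$ — though as $N=O(1)$ this does not affect the final $\epsilon$-dependence.
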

Lastly, we leave the following conjecture for the lower bound for future investigation. 
\begin{conjecture}[Lower bound with globally unbiased oracles]
    \label{conjecture:lower_bound}
    The current $\Omega(\epsilon^{-6})$ lower bound only holds with $r_{\epsilon} = \Theta(\epsilon)$. For fully general results, the main challenge is to construct a hard-instance where $g(x,y)$ is strongly-convex in $y$ for all $(x,y) \in \mathbb{R}^{d_x\times d_y}$, while at the same time $\|\grad_x g(x,y)\|_{\infty}$ is globally bounded by the same order of $\|\grad_x g(x,y^*(x))\|_{\infty}$ for all $y \in \mathbb{R}^{d_y}$. We conjecture that lower bounds remain the same with standard stochastic oracles ({\it i.e.,} with $r=\infty$).
\end{conjecture}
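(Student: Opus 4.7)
The conjecture asks for the $\Omega(\epsilon^{-6})$ lower bound to survive in the fully general oracle setting $r=\infty$, where stochastic gradient estimators must be unbiased and variance-bounded at every query $(x,y)$, rather than only inside an $r_\epsilon$-tube around $y^*(x)$. My plan is to retain the architecture of Section~\ref{section:lower_bound} in full -- the length-$\Theta(\epsilon^{-2})$ probabilistic zero-chain with Bernoulli noise $\xi\sim\Ber(p)$ for $p=\Theta(\epsilon^4)$, and the randomized-embedding lift of Section~\ref{subsec:conversion_to_randomized} -- and to concentrate all modifications on the lower-level coupling $g$. The goal is to produce a $g$ whose $\|\nabla_x g(x,y)\|_\infty$ is globally bounded by $O(\epsilon^2)$, so that the variance bound $\Var(\hat\nabla_x g)\lesssim \|\nabla_x g\|_\infty^2/p\le \sigma^2$ used in Lemma~\ref{lemma:probabilistic_zerochain} remains valid for every $y\in\mathbb{R}^{d_y}$; once such a $g$ is in hand, the stochastic oracle is defined coordinate-wise as in \eqref{eq:biased_gradient}, the probabilistic zero-chain argument is unchanged, and the Theorem~\ref{theorem:final_lower_bound}-style assembly yields the desired $\Omega(\epsilon^{-6})$ (resp.\ $\Omega(\epsilon^{-4})$ under stochastic smoothness) bound.

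The first substantive step is to specify the four design constraints on the new $g$. Keeping $f(x,y)=y$ together with the $\tfrac{1}{10}\|x\|^2$ confining term of \eqref{eq:final_lower_bound_instance}, the new $g$ must satisfy (a) $\mu_g$-strong convexity in $y$ on all of $\mathbb{R}^{d_x\times d_y}$; (b) $\|\nabla_x g(x,y)\|_\infty = O(\epsilon^2)$ uniformly in $(x,y)$; (c) an induced hyperobjective $F_{\mathrm{hyp}}(x)=y^*(x)$ that is a bona fide length-$\Theta(\epsilon^{-2})$ zero-chain with $\|\nabla F_{\mathrm{hyp}}\|_\infty=\Omega(\epsilon)$ along its non-terminal portion (otherwise every point is trivially $\epsilon$-stationary and the bound is vacuous); and (d) absolute smoothness constants $l_{g,1},l_{g,2}$ so that $(f,g)\in\mathcal{F}(1)$. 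A natural first ansatz is $g(x,y)=\tfrac{\mu_g}{2}y^2 - \mu_g F(x)\,\Psi_\epsilon(y-F(x))$, where $\Psi_\epsilon$ is a smooth plateau with $\Psi_\epsilon(0)=0$, $\Psi'_\epsilon(0)=1$, $\|\Psi_\epsilon\|_\infty\lesssim\epsilon$, and $\Psi'_\epsilon$ supported in $[-\epsilon,\epsilon]$. This places $y^*(x)=F(x)$ exactly and localizes $\nabla_{yx}^2 g=\Omega(\epsilon)$ to an $\epsilon$-window of $y$ around $y^*(x)$, which is precisely what is needed to reconcile (b) with (c) through the implicit-function identity $\nabla F_{\mathrm{hyp}} = -(\nabla_{yy}^2 g)^{-1}\nabla_{yx}^2 g$.

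The principal difficulty -- and the reason the statement is posed as a conjecture rather than a theorem -- is that (a), (b), and (c) pull in opposite directions under the $\mathcal{F}(1)$ constraint (d). Any plateau $\Psi_\epsilon$ with $\|\Psi_\epsilon\|_\infty\lesssim \epsilon$ concentrated on an $\epsilon$-window necessarily has $\|\Psi''_\epsilon\|_\infty=\Omega(1/\epsilon)$, so $\nabla_{yy}^2 g = \mu_g - \mu_g F(x)\Psi''_\epsilon$ retains strong convexity only when $|F(x)|\lesssim\epsilon$. Shrinking $|F|$ to $O(\epsilon)$ via the naive rescaling $\tilde F=\epsilon F$ drops $\|\nabla \tilde F\|_\infty$ to $O(\epsilon^2)$, collapsing condition (c); the combined rescaling $\tilde F(x)=\epsilon F(x/\epsilon)$ preserves $\|\nabla \tilde F\|_\infty=\Omega(\epsilon)$ but inflates $\|\nabla^2\tilde F\|=\Omega(1/\epsilon)$ and violates (d); and widening the plateau to $\Omega(1)$ inflates $\|\nabla_x g\|_\infty$ to $\Omega(\epsilon)$, yielding only the single-level $\Omega(\epsilon^{-4})$ bound. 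To escape this three-way tension I would pursue two parallel routes: (i) make $y$ vector-valued and spread the localized well across multiple $y$-coordinates via directional (non-isotropic) plateaus, so that the curvature budget $\|\nabla^2\Psi_\epsilon\|=\Omega(1/\epsilon)$ is no longer forced to live on a single axis and can be offset by off-diagonal cancellations; and (ii) exploit the block-separable structure $F(x)=\epsilon^2\sum_{i=1}^{d_x} f_i(x_{i-1},x_i)$ of \eqref{eq:chain_basic} to restrict to a carefully chosen subset of blocks that shrinks $|F|$ without collapsing the zero-chain length or blowing up higher-order smoothness. Verifying that one of these routes (or an entirely new construction) yields an instance in $\mathcal{F}(1)$ with the required probabilistic zero-chain probability $p=\Theta(\epsilon^4)$ is where I expect the bulk of the technical work to reside; a negative resolution would, conversely, imply an improved upper bound beyond Theorem~\ref{thm:up1} in the $r=\infty$ regime and is therefore equally informative.
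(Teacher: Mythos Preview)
The statement in question is explicitly a \emph{conjecture} that the paper leaves open for future investigation; the paper provides no proof of it. There is therefore nothing in the paper to compare your proposal against.

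Your write-up is not a proof but a research plan, and you acknowledge as much: you identify the architecture you would retain, the four design constraints on a new $g$, the three-way tension between strong convexity, the global $O(\epsilon^2)$ bound on $\|\nabla_x g\|_\infty$, and the $\Omega(\epsilon)$ hypergradient, and you propose two exploratory routes (vector-valued $y$ with non-isotropic plateaus; exploiting block-separability of $F$). This correctly isolates the obstruction the paper itself flags in the conjecture statement, namely that one must keep $g$ globally strongly convex in $y$ while simultaneously keeping $\|\nabla_x g(x,y)\|_\infty$ of the same order as $\|\nabla_x g(x,y^*(x))\|_\infty$ for all $y$. But neither of your two routes is carried through: you do not exhibit a concrete $g$ in $\mathcal{F}(1)$ satisfying (a)--(d), nor do you verify that the resulting oracle meets \eqref{eq:unbiased_gradients}--\eqref{eq:mean_squared_lipschitz} globally with $p=\Theta(\epsilon^4)$. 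Until such a construction is produced and checked, the conjecture remains open, and your proposal should be read as a sketch of where the difficulty lies rather than a resolution of it.
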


\section{Concluding Remarks}
In this work, we have studied the complexity of first-order methods for Bilevel optimization with $y^*$-aware oracles. When the oracle gives $r_{\epsilon} = \Theta(\epsilon)$ estimates of $y^*(x)$ along with $O(r_{\epsilon})$-locally reliable gradients, we establish $O(\epsilon^{-6}), O(\epsilon^{-4})$ upper bounds without and with stochastic smoothness, along with the matching $\Omega(\epsilon^{-6}), \Omega(\epsilon^{-4})$ lower bounds. Our upper bound analysis also holds with standard oracles ({\it i.e.,} with $r = \infty$), improving the best-known results given in \cite{kwon2023fully}. The remaining complexity questions include Conjecture \ref{conjecture:lower_bound}, and tight bounds in terms of other smoothness parameters. We conjecture that obtaining such results would require theoretical breakthroughs beyond existing techniques, thereby leaving them as unresolved challenges.

\bibliographystyle{abbrv}
\bibliography{main}

\newpage 

\appendix

\begin{appendices}


\section{First-order analysis for the upper bound} 

\label{ap:up0}

In this section, we establish the upper bounds on the convergence rate claimed in Theorems \ref{thm:up1} and \ref{thm:up2}.
We use the following notations for filtration: for $k=0,1, \cdots, K-1$ and $t = 0,1, \cdots, T-1$
\begin{itemize}
    \item 
    $\mF_k$: the $\sigma$-algebra generated by  $x^i, y^i, z^i, y^{i,t}, z^{i,t}$ for all $i = 0, 1, \cdots, k-1$ and $t = 0, 1, \cdots, T-1$.
    \item 
    $\mF_{k,s}$: the $\sigma$-algebra generated by $\mF_k$ and $y^{k,t}, z^{k,t}$ for all $t = 0, 1, \cdots, s-1$.
    \item 
    $\mF_k':= \mF_{k,T}$.
\end{itemize}


\subsection{Preliminaries}
Following \cite{kwon2023fully}, we reformulate \eqref{problem:bilevel} as the following constrained single-level problem:
\begin{align}
    &\min_{x \in \mX} \quad F(x) := f(x,y^*(x)) \nonumber \quad \textup{s.t.} \quad g(x,y) - g^{*}(x) \le 0, \label{eq:bilevel_constrained} \tag{\textbf{P'}}
\end{align}
where $g^{*}(x):=g(x, y^{*}(x))$. The Lagrangian $\mathcal{L}_{\lambda}$ for \eqref{eq:bilevel_constrained} with multiplier $\lambda>0$ is given as 
\begin{align}
\mathcal{L}_{\lambda}(x,y) = f(x,y) + \lambda(g(x,y)-g^{*}(x)). 
\end{align}
For each $x\in \mathbb{R}^{d_x}$, recall that 
\begin{align}
  y^{*}(x):= \argmin_{y\in \R^{d_{y}}} g(x,y),\quad y^{*}_{\lambda}(x):= \argmin_{y\in \R^{d_{y}}} \mathcal{L}_{\lambda}(x,y),\quad \mathcal{L}_{\lambda}^{*}(x):=\mathcal{L}_{\lambda}(x,y^{*}_{\lambda}(x)).
\end{align}

Algorithm  \ref{algo:penalty_coupling_yz} seeks to optimize $\mathcal{L}_{\lambda}^{*}(x)$,  instead of the hyperobjective $F(x)$ given in \eqref{eq:bilevel_constrained}. By the first-order optimality condition for $y_{\lambda}^{*}(x)$, we have 
\begin{align}\label{eq:1st_order_opt_y_lambda}
 0 = \nabla_{y}  \mathcal{L}_{\lambda}(x,y_{\lambda}^{*}(x)) = \nabla_{y} f(x,y_{\lambda}^{*}(x)) + \lambda \nabla_{y} g(x,y_{\lambda}^{*}(x)). 
\end{align}
According to Lemma 3.1 in \cite{kwon2023fully}, for any $x\in \mathbb{R}^{d_x}$ and $\lambda\ge 2l_{f,1}/\mu_{g}$, 
    \begin{align}
        \nabla \mathcal{L}_{\lambda}^{*}(x)  = \nabla_{x}  f(x, y^{*}_{\lambda}(x)) + \lambda \left( \nabla_{x} g(x,y^{*}_{\lambda}(x)) - \nabla_{x} g(x,y^{*}(x)) \right). 
    \end{align}
Hence, in order to estimate $\nabla \mathcal{L}_{\lambda}^{*}(x)$, one needs to estimate $y_{\lambda}^{*}(x)$ and $y^{*}(x)$. This are achieved by the inner loop so that $y^{k,T}\approx y_{\lambda}^{*}(x^{k})$ and $z^{k,T}\approx y^{*}(x^{k})$.

Here, we establish two preliminary lemmas that will be used in the subsequent sections. Lemma \ref{lemma:gradL_bias_control} below bounds the bias of the expected gradient estimator $G_{k}$ in \eqref{eq:biased_expected_gradient} for the gradient $\grad \mL_{\lambda}^* (x^k)$ of the surrogate hyperobjective. Note that the second part of this lemma is the restatement of Lemma~\ref{lemma:gradL_bias_control0}.

Denote 
\begin{align*}
    v^*(x) := y_\lambda^*(x) - y^*(x) \hbox{ and } v^k = y^k - z^k.   
\end{align*}
Also, we often use a short-hand $y_{\lambda,k}^* = y_{\lambda}^*(x^k), y_k^* = y^*(x^k)$, and $v^*_k = v^*(x^k)$. Recall that $x^{k+1} = x^k - \alpha \hat{G}_k$ where $\hat{G}_k$ is given in \eqref{eq:hatg2}.

\begin{lemma}
\label{lemma:gradL_bias_control}

Suppose that Assumptions \ref{assumption:nice_functions}-\ref{assumption:hessian_lipschitz_g}, \eqref{eq:unbiased_gradients}, and \eqref{eq:bounded_variance_gradients}
hold.
\begin{enumerate}
    \item[(i)]
    Then, we have
\begin{align}
\label{eq:gradient_estimation_error_pf3}
    \|\grad \mL_{\lambda}^* (x^k) - G_k\| \le (l_{f,1} + \lambda l_{g,1})  \left(\|y^{k+1} - y_{\lambda}^* (x^k)\| + \|z^{k+1} - y^* (x^k)\|\right).
\end{align}
\item[(ii)]
If we further assume \eqref{eq:mean_squared_lipschitz} and $r_{\lambda} = \frac{l_{f,0}}{\mu_g \lambda}$, then
    \begin{align*}
        \|\grad \mL_{\lambda}^* (x^k) - G_k\| \leq l_{y} \|y^{k+1} - y_{\lambda}^* (x^k)\| + \lambda l_{g,1} \|v^{k+1} - v^*(x^k)\| + \frac{l_{f,0} l_{y}}{\mu_g \lambda}
    \end{align*}
    where $l_{y}:= l_{f,1} + \frac{l_{g,2} l_{f,0}}{\mu_g}$.
\end{enumerate}

\end{lemma}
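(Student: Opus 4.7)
The plan is to expand $\grad \mL_{\lambda}^{*}(x^{k}) - G_{k}$ into three pieces according to its definition and control each one. For part (i), I would write
\begin{align*}
\grad \mL_{\lambda}^{*}(x^{k}) - G_{k} &= \bigl[\grad_{x} f(x^{k}, y_{\lambda,k}^{*}) - \grad_{x} f(x^{k}, y^{k+1})\bigr] \\
&\quad + \lambda \bigl[\grad_{x} g(x^{k}, y_{\lambda,k}^{*}) - \grad_{x} g(x^{k}, y^{k+1})\bigr] \\
&\quad - \lambda \bigl[\grad_{x} g(x^{k}, y_{k}^{*}) - \grad_{x} g(x^{k}, z^{k+1})\bigr],
\end{align*}
then apply $l_{f,1}$-smoothness of $f$ to the first bracket and $l_{g,1}$-smoothness of $g$ to the remaining two. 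A triangle inequality after collecting like terms in $\|y^{k+1} - y_{\lambda,k}^{*}\|$ and $\|z^{k+1} - y_{k}^{*}\|$ yields the bound in part (i) directly.

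For part (ii), I need to exploit the Hessian-Lipschitz structure in order to replace the loose bound on $\|z^{k+1} - y_{k}^{*}\|$ with one in terms of $\|v^{k+1} - v_{k}^{*}\|$. I would keep the $\grad_{x} f$ contribution as in part (i), but rewrite the two $\grad_{x} g$-differences as integrals of $\grad_{xy}^{2} g$ via the fundamental theorem of calculus:
\begin{align*}
    \grad_{x} g(x^{k}, y_{\lambda,k}^{*}) - \grad_{x} g(x^{k}, y_{k}^{*}) &= \int_{0}^{1} \grad_{xy}^{2} g\bigl(x^{k}, y_{\lambda,k}^{*} - s v_{k}^{*}\bigr)\, v_{k}^{*} \, ds, \\
    \grad_{x} g(x^{k}, y^{k+1}) - \grad_{x} g(x^{k}, z^{k+1}) &= \int_{0}^{1} \grad_{xy}^{2} g\bigl(x^{k}, y^{k+1} - s v^{k+1}\bigr)\, v^{k+1} \, ds,
\end{align*}
and subtract. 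Inserting and removing the integrand $\grad_{xy}^{2} g\bigl(x^{k}, y^{k+1} - s v^{k+1}\bigr)\, v_{k}^{*}$ splits the difference into (a) a Hessian-weighted average of $v_{k}^{*} - v^{k+1}$, whose norm is at most $l_{g,1}\|v^{k+1} - v_{k}^{*}\|$, and (b) a Hessian-Lipschitz remainder whose integrand is bounded via Assumption \ref{assumption:hessian_lipschitz_g} by $l_{g,2}\bigl(\|y_{\lambda,k}^{*} - y^{k+1}\| + s\|v_{k}^{*} - v^{k+1}\|\bigr)\|v_{k}^{*}\|$.

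To finish, I would invoke $\|v_{k}^{*}\| \le l_{f,0}/(\mu_{g}\lambda)$, which follows from the first-order optimality condition \eqref{eq:1st_order_opt_y_lambda} together with $\mu_{g}$-strong convexity of $g(x^{k},\cdot)$ as in \cite{kwon2023fully}, and the analogous estimate $\|v^{k+1}\| \le r_{\lambda} = l_{f,0}/(\mu_{g}\lambda)$ guaranteed by the trust-region projection of $z$ onto $\mathbb{B}(y^{k,t+1}, r_{\lambda})$ in the smooth branch of Algorithm \ref{algo:penalty_coupling_yz}. After multiplying by $\lambda$, the Hessian-Lipschitz remainder becomes $\tfrac{l_{g,2} l_{f,0}}{\mu_{g}}\bigl(\|y^{k+1} - y_{\lambda,k}^{*}\| + \tfrac{1}{2}\|v^{k+1} - v_{k}^{*}\|\bigr)$, and the residual $\tfrac{l_{g,2} l_{f,0}}{2\mu_{g}}\|v^{k+1} - v_{k}^{*}\|$ can be absorbed into the claimed constant term using $\|v^{k+1} - v_{k}^{*}\| \le 2 l_{f,0}/(\mu_{g}\lambda)$. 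Combining with the $l_{f,1}\|y^{k+1} - y_{\lambda,k}^{*}\|$ produced by the $\grad_{x} f$ piece yields precisely the coefficient $l_{y} = l_{f,1} + l_{g,2} l_{f,0}/\mu_{g}$ on $\|y^{k+1} - y_{\lambda,k}^{*}\|$, the term $\lambda l_{g,1}\|v^{k+1} - v_{k}^{*}\|$, and the additive constant $\tfrac{l_{f,0} l_{y}}{\mu_{g}\lambda}$.

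The main obstacle will be bookkeeping: getting the integral-with-reference decomposition to line up so that the cross terms really do collapse into a single pure $\|v - v^{*}\|$ contribution at coefficient $l_{g,1}$ plus residuals whose $\lambda$-scaled sizes are controlled by $\|v_{k}^{*}\|,\|v^{k+1}\| = O(1/\lambda)$. The trust-region projection on $z$ is essential for this absorption step, so before committing to the Hessian-Lipschitz bound I would first verify that this projection is in force at every inner iteration whenever $\tilde{l}_{g,1} < \infty$; without it $\|v^{k+1}\|$ could be $\Theta(1)$ and the second-order residual would not reduce to the claimed additive constant.
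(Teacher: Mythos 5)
Your proposal is correct and takes essentially the same approach as the paper. Part (i) is the direct triangle-inequality-plus-smoothness argument the paper uses, and part (ii) follows the paper's strategy step for step: isolate the $\grad_x f$ contribution, express the two $\grad_x g$ differences via $\grad_{xy}^2 g$, add-and-subtract a common Hessian term to split into an $l_{g,1}\|v^{k+1}-v_k^*\|$ piece plus Hessian-Lipschitz remainders, and then absorb those remainders using $\|v_k^*\|,\|v^{k+1}\|\le r_\lambda=l_{f,0}/(\mu_g\lambda)$ (the former from strong convexity of $g(x,\cdot)$ via \eqref{eq:1st_order_opt_y_lambda}, the latter from the trust-region projection on $z$). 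The only cosmetic difference is that you carry the Hessian in a fundamental-theorem-of-calculus integral while the paper writes a second-order Taylor expansion with an explicit $l_{g,2}\|\cdot\|^2$ remainder; these are interchangeable and yield the same final coefficients $l_y=l_{f,1}+l_{g,2}l_{f,0}/\mu_g$, $\lambda l_{g,1}$, and additive constant $l_{f,0}l_y/(\mu_g\lambda)$. Your closing caution about verifying the projection is indeed the step the paper relies on to get $\|v^{k+1}\|\le r_\lambda$; it is enforced at every inner iteration in the $\tilde{l}_{g,1}<\infty$ branch of Algorithm \ref{algo:penalty_coupling_yz}.
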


\begin{proof}

The first part directly follows from the smoothness properties of $f(x,\cdot)$ and $g(x,\cdot)$. Let us show the second part. By \eqref{eq:mean_squared_lipschitz}, 
    \begin{align*}
        \|\grad \mL^* (x^k) - G_k\| &\le l_{f,1} \|y^{k+1} - y_{\lambda,k}^*\| + \lambda \|\grad_{xy}^2 g(x,y_{\lambda,k}^*) (y_{k}^* - y_{\lambda, k}^*) - \grad_{xy}^2 g(x,y^{k+1}) (z^{k+1} - y^{k+1})\|  \\
        &\qquad + \lambda l_{g,2} \left( \|y_k^*-y_{\lambda,k}^*\|^2 + \|z^{k+1} - y^{k+1}\|^2 \right) \\
        &\le l_{f,1} \|y^{k+1} - y_{\lambda,k}^*\| + \lambda l_{g,2} \|y_{\lambda,k}^* - y^{k+1}\| \|v^*_{k}\| 
        \\
        &\qquad + \lambda l_{g,1} \|v_{k}^* - v^{k+1}\| + \lambda l_{g,2} \left( \|v_k^*\|^2 + \|v^{k+1}\|^2 \right).
    \end{align*}
    In the last inequality, we use
    \begin{align*}
        &\grad_{xy}^2 g(x,y_{\lambda,k}^*) (y_{k}^* - y_{\lambda, k}^*) - \grad_{xy}^2 g(x,y^{k+1}) (z^{k+1} - y^{k+1}) \\
        &\qquad = (\grad_{xy}^2 g(x,y_{\lambda,k}^*) - \grad_{xy}^2 g(x,y^{k+1})) (y_{k}^* - y_{\lambda, k}^*) + \grad_{xy}^2 g(x,y^{k+1})(y_{k}^* - y_{\lambda, k}^* - z^{k+1} - y^{k+1}).
    \end{align*}
    Finally, note that $\|v_{k}^*\|, \|v^{k}\| < r_{\lambda}$ for all $k$ due to the projection step and $r_\lambda \asymp \lambda^{-1}$, and thus
    \begin{align*}
        \|\grad \mL^* (x^k) - G_k\| &\le (l_{f,1} + l_{g,2} \lambda r_{\lambda}) \|y^{k+1} - y_{\lambda,k}^*\| + \lambda l_{g,1} \|v_{k}^* - v^{k+1}\| + l_{g,2} \lambda r_{\lambda}^2. 
    \end{align*}
\end{proof}









Next, the following lemma gives a bound on the variance of the stochastic gradient estimator $\hat{G}_{k}$ in \eqref{eq:hatg2}. 

\begin{lemma}
   \label{lemma:Lgrad_variance}
    Suppose that Assumptions \ref{assumption:nice_functions}-\ref{assumption:hessian_lipschitz_g}, \eqref{eq:unbiased_gradients}, and \eqref{eq:bounded_variance_gradients}
hold.
    Then, the variance of $\hat{G}_k$ is bounded for all $k$:
    \begin{align*}
        \Var(\hat{G}_k) := \Exs[\|\hat{G}_k - G_k\|^2] \le \frac{1}{M} \left(2\sigma_f^2 + 8 \lambda^2 \sigma_g^2\right). 
    \end{align*}
    If we further assume \eqref{eq:mean_squared_lipschitz} and $r_{\lambda} = \frac{l_{f,0}}{\mu_g \lambda}$, 
    \begin{align*}
        \Var(\hat{G}_k) := \Exs[\|\hat{G}_k - G_k\|^2] \le \frac{1}{M} \left(2\sigma_f^2 + \frac{8 \tilde{l}_{g,1}^2 l_{f,0}^2}{\mu_g^2}\right). 
    \end{align*}
\end{lemma}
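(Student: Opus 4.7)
My plan is to condition on the $\sigma$-algebra $\mathcal{F}_{k}'$ (under which $x^{k}, y^{k+1}, z^{k+1}$ are all measurable) so that $G_{k}$ becomes deterministic and the randomness in $\hat{G}_{k}$ reduces to the i.i.d.\ outer-batch samples $\{\zeta_{k,m}^{x}, \xi_{x}^{k,m}\}_{m=1}^{M}$. Since $G_{k}$ is exactly the conditional mean of $\hat{G}_{k}$ by \eqref{eq:unbiased_gradients}, the tower property gives $\E[\|\hat{G}_{k} - G_{k}\|^{2}] = \E[\,\Var(\hat{G}_{k} \mid \mathcal{F}_{k}')]$. Independence across $m$ then reduces the bound to $\frac{1}{M}$ times the conditional variance of a single summand $h_{x}^{k,1}$, which is the workhorse quantity.

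Next I would exploit the independence of $\zeta^x$ from $\xi^x$ to split the variance of $h_{x}^{k,1}$ additively into an $f$-piece and a $g$-piece. The $f$-piece is bounded by $\sigma_{f}^{2}$ directly from \eqref{eq:bounded_variance_gradients}. The $g$-piece is the variance of the coupled difference $\lambda(\hat{\nabla}_{x} g(x^{k}, y^{k+1}; \xi) - \hat{\nabla}_{x} g(x^{k}, z^{k+1}; \xi))$, in which the same sample $\xi = \xi_{x}^{k,1}$ is shared between the two terms. Without coupling this would be $O(\lambda^{2} \sigma_{g}^{2})$ regardless, so the coupling only becomes essential for the second, smoothness-aware bound.

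For the first (non-smooth) bound I would apply the crude estimate $\Var(B-C)\le 2\Var(B)+2\Var(C)\le 4\sigma_{g}^{2}$ to the coupled difference, which after a further loose factor of $2$ on the $f$-piece yields the claimed $\frac{1}{M}(2\sigma_{f}^{2}+8\lambda^{2}\sigma_{g}^{2})$. For the second bound I would instead invoke \eqref{eq:mean_squared_lipschitz} to obtain $\E[\|B-C\|^{2}\mid \mathcal{F}_{k}'] \le \tilde{l}_{g,1}^{2}\, \|y^{k+1}-z^{k+1}\|^{2}$, and then use the projection step in the ``else'' branch of Algorithm \ref{algo:penalty_coupling_yz}, which enforces $z^{k,t+1}\in\mathbb{B}(y^{k,t+1},r_{\lambda})$ at \emph{every} inner iteration $t$. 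Consequently $\|y^{k+1}-z^{k+1}\|\le r_{\lambda}= l_{f,0}/(\mu_{g}\lambda)$ holds deterministically at $t=T$, and the $\lambda^{2}$ prefactor cancels against $r_{\lambda}^{2}$, producing the dimensionless $\tilde{l}_{g,1}^{2} l_{f,0}^{2}/\mu_{g}^{2}$ contribution.

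The step that requires the most care is the conditioning: one needs $\|y^{k+1}-z^{k+1}\|$ to be $\mathcal{F}_{k}'$-measurable so that it can be pulled out of the conditional variance before \eqref{eq:mean_squared_lipschitz} is applied. This is automatic because the inner loop terminates and produces $y^{k+1},z^{k+1}$ before the outer-batch samples $\{\zeta_{k,m}^{x}, \xi_{x}^{k,m}\}$ are drawn, and because the ball constraint is re-imposed at each inner step so it propagates to the final iterate. Beyond this observation, the remainder of the argument is routine variance bookkeeping, and I do not anticipate a conceptual obstacle.
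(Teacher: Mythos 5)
Your proposal is correct and follows essentially the same route as the paper: reduce to the variance of a single summand via the i.i.d.\ structure of the outer batch, split off the $f$-noise from the coupled $g$-difference, and bound the latter by $4\sigma_g^2$ without smoothness or by $\tilde{l}_{g,1}^2 r_\lambda^2$ (using \eqref{eq:mean_squared_lipschitz} and the deterministic bound $\|y^{k+1}-z^{k+1}\|\le r_\lambda$ from the per-step projection) with it. The only cosmetic difference is that you exploit the independence of $\zeta^x$ and $\xi^x$ to get exact additivity of the conditional variance, and then use $\Var(B-C)\le \E[\|B-C\|^2]$, which gives slightly sharper constants than the paper's repeated Young's inequality $\|a+b-c\|^2\le 2\|a\|^2+4\|b\|^2+4\|c\|^2$; both recover the stated bound.
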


\begin{proof}
    Under \eqref{eq:unbiased_gradients}, and \eqref{eq:bounded_variance_gradients}, we can bounded the vairance as the following:
    \begin{align*}
        M \Exs[ \|\hat{G}_k - G_k\|^2] &\le 2\underbrace{\Exs[ \|\hat{\grad}_x f(x^k, y^{k+1}; \zeta_k^x) - \grad_x f(x^k, y^{k+1}) \|^2]}_{\le \sigma_f^2} \\
        &\qquad + 4 \lambda^2 \underbrace{\Exs[ \|\hat{\grad}_x g(x^k, y^{k+1}; \zeta_k^x) - \grad_x g(x^k, y^{k+1})\|^2 ]}_{\le \sigma_g^2} \\
        &\qquad + 4 \lambda^2 \underbrace{\Exs[ \|\hat{\grad}_x g(x^k, z^{k+1}; \zeta_k^x) - \grad_x g(x^k, z^{k+1})\|^2 ]}_{\le \sigma_g^2}.
    \end{align*}
    Under \eqref{eq:mean_squared_lipschitz} and $r_{\lambda} = \frac{l_{f,0}}{\mu_g \lambda}$, we can use different inequality:
    \begin{align*}
        M \Exs[ \|\hat{G}_k - G_k\|^2] &\le 2\underbrace{\Exs[ \|\hat{\grad}_x f(x^k, y^{k+1}; \zeta_k^x) - \grad_x f(x^k, y^{k+1}) \|^2]}_{\le \sigma_f^2} \\
        &\qquad + 4 \lambda^2 \underbrace{\Exs[ \|\hat{\grad}_x g(x^k, y^{k+1}; \zeta_k^x) - \hat{\grad}_x g(x^k, z^{k+1}; \zeta_k^x)\|^2 ]}_{\le \tilde{l}_{g,1} \|y^{k+1} - z^{k+1}\|^2 \le \tilde{l}_{g,1} r_\lambda^2 } \\
        &\qquad + 4 \lambda^2 \underbrace{\Exs[ \|\grad_x g(x^k, y^{k+1}) - \grad_x g(x^k, z^{k+1})\|^2 ]}_{_{\le l_{g,1} \|y^{k+1} - z^{k+1}\|^2 \le l_{g,1} r_\lambda^2 }}.
    \end{align*}
    Nothing that $\tilde{l}_{g,1} \ge l_{g,1}$, we have the lemma.
\end{proof}



\subsection{$O(\epsilon^{-6})$ upper bound}
\label{ap:up1}

In this subsection, we establish the $O(\epsilon^{-6})$ upper bound on the complexity of Algorithm \ref{algo:penalty_coupling_yz} as stated in Theorem \ref{thm:up1}. 

We begin with the standard argument that starts with estimating the one-step change in the (surrogate) objective $\mathcal{L}_{\lambda}^{*}(x^{k+1}) - \mathcal{L}_{\lambda}^{*}(x^{k})$. We denote
\begin{align}
\label{eq:gvg}
   G_k &:=\grad_x f(x^k, y^{k+1}) + \lambda (\grad_x g(x^k, y^{k+1}) - \grad_x g(x^k, z^{k+1})), \\
   \Var(\hat{G}_k) &:= \Exs[\|\hat{G}_k - G_k\|^2].
\end{align}



\begin{proposition}
\label{prop:upl}
    Under the step size rule  $\alpha_k \in (0, \frac{1}{2L})$ for a constant $L$ give in Lemma~\ref{lem:llam}, we have
    \begin{align*}
\Exs[\mathcal{L}_{\lambda}^{*}(x^{k+1}) | \mF_k'] - \mathcal{L}_{\lambda}^{*}(x^{k}) &\leq -\frac{\alpha_k}{2} \lVert \nabla \mathcal{L}_{\lambda}^{*}(x^{k}) \rVert^{2} - \frac{\alpha_k}{4} \|\hat{G}_k\|^2 + \alpha_k  \Var(\hat{G}_k) + \alpha_k\| \nabla \mathcal{L}_{\lambda}^{*}(x^{k}) - G_k \|^2.
\end{align*} 
\end{proposition}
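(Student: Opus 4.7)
The plan is to apply the standard biased-SGD descent lemma, exploiting that Algorithm \ref{algo:penalty_coupling_yz} performs a minibatch gradient step $x^{k+1}=x^k-\alpha_k \hat{G}_k$ on a surrogate objective $\mathcal{L}_\lambda^*$ that will be shown smooth elsewhere (Lemma~\ref{lem:llam}). First I would invoke the $L$-smoothness of $\mathcal{L}_\lambda^*$ to get the pointwise descent bound
\[
\mathcal{L}_\lambda^*(x^{k+1}) \le \mathcal{L}_\lambda^*(x^k) - \alpha_k \langle \nabla \mathcal{L}_\lambda^*(x^k), \hat{G}_k \rangle + \tfrac{L \alpha_k^2}{2}\|\hat{G}_k\|^2,
\]
and then take conditional expectation with respect to $\mathcal{F}_k'$. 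The key observation is that $G_k$ defined in \eqref{eq:gvg} is $\mathcal{F}_k'$-measurable (it depends only on $x^k, y^{k+1}, z^{k+1}$), whereas the batch samples $\{\zeta_{k,m}^x,\xi_x^{k,m}\}_{m=1}^M$ entering $\hat{G}_k$ are independent of $\mathcal{F}_k'$. Together with \eqref{eq:unbiased_gradients}, this gives $\Exs[\hat{G}_k \mid \mathcal{F}_k'] = G_k$; the shared randomness $\xi_x^{k,m}$ across the two $\hat{\nabla}_x g$ terms does not spoil unbiasedness because expectation is linear.

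Second, on the cross term I would use the polarization identity
\[
-\alpha_k\langle \nabla \mathcal{L}_\lambda^*(x^k), G_k\rangle
= -\tfrac{\alpha_k}{2}\|\nabla \mathcal{L}_\lambda^*(x^k)\|^2 - \tfrac{\alpha_k}{2}\|G_k\|^2 + \tfrac{\alpha_k}{2}\|\nabla \mathcal{L}_\lambda^*(x^k) - G_k\|^2,
\]
which delivers the negative $\|\nabla \mathcal{L}_\lambda^*(x^k)\|^2$ term that drives the convergence and the residual bias term $\|\nabla \mathcal{L}_\lambda^*(x^k) - G_k\|^2$. For the quadratic term I write $\|\hat{G}_k\|^2 = \|G_k\|^2 + 2\langle G_k, \hat{G}_k - G_k\rangle + \|\hat{G}_k - G_k\|^2$ and take conditional expectation, giving $\Exs[\|\hat{G}_k\|^2\mid \mathcal{F}_k'] = \|G_k\|^2 + \Var(\hat{G}_k)$, so that the variance of the minibatch estimator cleanly separates out.

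Third, I would impose the step-size rule $\alpha_k \le 1/(2L)$, which ensures $L\alpha_k^2/2 \le \alpha_k/4$, and collect terms: the two $\|G_k\|^2$ contributions combine into a $-\alpha_k\|G_k\|^2/4$ term that can be relaxed to $-\alpha_k\|\hat{G}_k\|^2/4$ after reinserting the variance identity (at the cost of inflating the $\Var(\hat{G}_k)$ coefficient to $\alpha_k$), while the bias term is bounded coarsely by $\alpha_k\|\nabla \mathcal{L}_\lambda^*(x^k) - G_k\|^2$. Rearranging yields exactly the claimed inequality.

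The step that requires the most care is bookkeeping the measurability/independence structure that justifies $\Exs[\hat{G}_k \mid \mathcal{F}_k'] = G_k$ and $\Exs[\|\hat{G}_k\|^2\mid\mathcal{F}_k']=\|G_k\|^2+\Var(\hat{G}_k)$; everything else is a linear combination of Cauchy--Schwarz/Young inequalities. Observe that this proposition deliberately leaves both $\Var(\hat{G}_k)$ and $\|\nabla \mathcal{L}_\lambda^*(x^k) - G_k\|^2$ unbounded, precisely so that the two complexity regimes of Theorems~\ref{thm:up1} and~\ref{thm:up2} can later plug in the estimates from Lemmas~\ref{lemma:gradL_bias_control} and~\ref{lemma:Lgrad_variance} with their respective smoothness assumptions.
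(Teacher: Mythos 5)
Your proof is correct and lands where the paper does; the only difference is in the algebraic bookkeeping of the cross and quadratic terms. The paper keeps everything pointwise as long as possible: it polarizes $-\alpha_k\langle\nabla\mathcal{L}_\lambda^*(x^k),\hat G_k\rangle$ directly against $\hat G_k$, applies Young's inequality to $\|\nabla\mathcal{L}_\lambda^*(x^k)-\hat G_k\|^2$ to split out $\|\hat G_k-G_k\|^2$, and only then takes expectation to turn that term into $\Var(\hat G_k)$. You instead take the conditional expectation on $\mathcal F_k'$ immediately, polarize against $G_k$, and separately expand $\Exs[\|\hat G_k\|^2\mid\mathcal F_k']=\|G_k\|^2+\Var(\hat G_k)$. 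Both are the standard biased-SGD descent lemma dressed two ways; your route actually produces the slightly sharper coefficients $\tfrac{\alpha_k}{2}$ on both the variance and the bias term (which you then deliberately relax to $\alpha_k$ to match the stated claim), while the paper's Young-inequality split gives $\alpha_k$ directly. Neither improvement affects the downstream rates. Your measurability reasoning — $G_k$ is $\mathcal F_k'$-measurable, the outer-loop samples $\zeta_{k,m}^x,\xi_x^{k,m}$ are fresh, so $\Exs[\hat G_k\mid\mathcal F_k']=G_k$ — is exactly the justification the paper leaves implicit, and you are right that linearity of expectation handles the coupled randomness across the two $\hat\nabla_x g$ evaluations.
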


\begin{proof}
    The $L$-smoothness of  $\mathcal{L}_{\lambda}^{*}(x)$ given in Lemma~\ref{lem:llam} and $x^{k+1} = x^k - \alpha_k \hat{G}_k$ yield that
\begin{align*}\label{eq:pf_main_descent_lemma1}
        \mathcal{L}_{\lambda}^{*}(x^{k+1}) - \mathcal{L}_{\lambda}^{*}(x^{k}) &\le  \langle \nabla \mathcal{L}_{\lambda}^{*}(x^{k}),\, x^{k+1}-x^{k} \rangle + \frac{L}{2} \lVert x^{k+1} - x^{k} \rVert^{2},\\
        &=  -\alpha_k \langle \nabla \mathcal{L}_{\lambda}^{*}(x^{k}),\, \hat{G}_k \rangle + \frac{\alpha_k^{2} L}{2} \lVert \hat{G}_k \rVert^{2}.\\
        &= -\frac{\alpha_k}{2} \lVert \nabla \mathcal{L}_{\lambda}^{*}(x^{k}) \rVert^{2} + \frac{\alpha_k^2 L - \alpha_k}{2}  \|\hat{G}_k\|^2 +  \frac{\alpha_k}{2}\| \nabla \mathcal{L}_{\lambda}^{*}(x^{k}) - \hat{G}_k\|^2.
    \end{align*}
Rearranging the right-hand side and using our step size rule  $\alpha_k \in (0, \frac{1}{2L})$, 
\begin{align}
    \mathcal{L}_{\lambda}^{*}(x^{k+1}) - \mathcal{L}_{\lambda}^{*}(x^{k}) &\le -\frac{\alpha_k}{2} \lVert \nabla \mathcal{L}_{\lambda}^{*}(x^{k}) \rVert^{2} - \frac{\alpha_k}{4} \|\hat{G}_k\|^2 +  \frac{\alpha_k}{2}\| \nabla \mathcal{L}_{\lambda}^{*}(x^{k}) - \hat{G}_k\|^2.    
\end{align}
By Young's inequality, the last term on the right-hand side is bounded by
\begin{align}
    \alpha_k\| \nabla \mathcal{L}_{\lambda}^{*}(x^{k}) - G_k \|^2 + \alpha_k \|\hat{G}_k - G_k\|^2.
\end{align}
Taking the expectation on both sides, we conclude our claim. 
\end{proof}






    Next, denote 
    \begin{align}
    \mathcal{J}_{k}:=  \lVert y^{k,0} - y_{\lambda}^{*}(x^{k}) \rVert^{2}  + \lVert z^{k,0} - y^{*}(x^{k}) \rVert^{2}.
    \end{align}
    We derive a recursive inequality  for the above quantity to obtain the following bound on the weighted sum of the squared norm of the expected gradients.

%
\begin{proposition}
\label{prop:upeq}
    Under the same setting as in Theorem~\ref{thm:up1}, we have 
    \begin{align}
    \sum_{k=0}^{n-1} \frac{\alpha_{k} }{2} \E[ \lVert \nabla \mathcal{L}_{\lambda}^{*}(x^{k}) \rVert^{2}]  &\le  \E[  \mathcal{L}_{\lambda}^{*}(x^{0}) ] - \inf_{x\in \R^{d_{x}}} \E[  \mathcal{L}_{\lambda}^{*}(x) ] +  \sigma_{x}^{2} \sum_{k=0}^{n-1} \alpha_{k} +(\E[\mathcal{J}_{0}] + A) \sum_{k=0}^{n-1} \alpha_{k} \frac{16C^{3}/\lambda}{\mu_{g}(1+T)},  \label{eq:gradient_estimation_error_pf5}
    \end{align}
    where $A = (4/\mu_g)^2(\lambda^{-2}\sigma_{f}^{2}+\sigma_{g}^{2} )  + (4/\mu_g)^2 4(\lambda/Cl_{g,1})\sigma_{g}^{2}$.
\end{proposition}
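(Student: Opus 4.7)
The plan is to take total expectations in Proposition~\ref{prop:upl}, sum across outer iterations $k = 0, \ldots, n-1$, and telescope. The left-hand side collapses to $\E[\mL_\lambda^*(x^0)] - \E[\mL_\lambda^*(x^n)] \le \E[\mL_\lambda^*(x^0)] - \inf_x \E[\mL_\lambda^*(x)]$, giving the first two terms on the right of the target inequality. By Lemma~\ref{lemma:Lgrad_variance}, choosing the batch size $M$ so that $(2\sigma_f^2 + 8\lambda^2\sigma_g^2)/M \le \sigma_x^2$ bounds $\sum_k \alpha_k\,\E[\Var(\hat G_k)]$ by $\sigma_x^2 \sum_k \alpha_k$. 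What remains is to bound the cumulative bias $\sum_k \alpha_k\,\E[\|\nabla \mL_\lambda^*(x^k) - G_k\|^2]$ by the last summand of the target inequality.

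Bias control starts from Lemma~\ref{lemma:gradL_bias_control}(i), which gives
\begin{align*}
\|\nabla \mL_\lambda^*(x^k) - G_k\|^2 \le 2(l_{f,1} + \lambda l_{g,1})^2 \left(\|y^{k+1} - y_\lambda^*(x^k)\|^2 + \|z^{k+1} - y^*(x^k)\|^2\right).
\end{align*}
The two inner errors are outputs of $T$ projected-SGD steps on the $\mu_g$-strongly-convex functions $\lambda^{-1}f(x^k,\cdot) + g(x^k,\cdot)$ and $g(x^k,\cdot)$ starting from $y^{k,0}, z^{k,0}$, whose unique minimizers are $y_\lambda^*(x^k)$ and $y^*(x^k)$ by \eqref{eq:1st_order_opt_y_lambda}. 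With the step size $\gamma_t = \Theta(1/(\mu_g(1+t)))$, the standard strongly-convex SGD analysis yields
\begin{align*}
\E\!\left[\|y^{k,T} - y_\lambda^*(x^k)\|^2 + \|z^{k,T} - y^*(x^k)\|^2 \,\big|\, \mF_k\right] \lesssim \frac{\mathcal{J}_k + A_0}{\mu_g(1+T)},
\end{align*}
where $A_0 \asymp \lambda^{-2}\sigma_f^2 + \sigma_g^2$ aggregates the stochastic noise in the $y$- and $z$-updates (the $\lambda^{-1}$ scaling of $\hat\nabla_y f$ suppresses the $f$-noise).

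Next comes the conversion of the per-iteration bias, which depends on the random initialization error $\mathcal{J}_k$, into one depending only on $\mathcal{J}_0$. Combining the Lipschitz dependence of $y_\lambda^*, y^*$ on $x$ (whose constants are $O(1)$ in $\lambda$, via the implicit function theorem under Assumption~\ref{assumption:nice_functions}) with the fact that the projections onto the $y^*$-aware balls in Algorithm~\ref{algo:penalty_coupling_yz} never push iterates away from $y_\lambda^*(x^{k+1}), y^*(x^{k+1})$ (both of which lie inside those balls thanks to $r \ge 6l_{f,0}/(\mu_g\lambda)$), we obtain a drift bound
\begin{align*}
\E[\mathcal{J}_{k+1}] \le 2\,\E\!\left[\|y^{k+1} - y_\lambda^*(x^k)\|^2 + \|z^{k+1} - y^*(x^k)\|^2\right] + O\!\left(\alpha_k^2\, \E[\|\hat G_k\|^2]\right).
\end{align*}
Iterating this recursion yields $\E[\mathcal{J}_k] \le \E[\mathcal{J}_0] + A$ uniformly in $k$, where $A$ collects the $A_0$-contributions and any additional $\lambda$-dependent noise factors. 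Substituting the uniform bound into the bias estimate and folding the product of the amplification $(l_{f,1}+\lambda l_{g,1})^2$, the contraction $1/(\mu_g(1+T))$, and the smoothness prefactors into a single constant $C$ produces precisely the coefficient $16 C^3/(\lambda\mu_g(1+T))$ multiplying $\E[\mathcal{J}_0] + A$.

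The main obstacle lies in the absorption step: the drift-induced $\alpha_k^2\|\hat G_k\|^2$ term, after being multiplied by the bias amplification $\asymp \lambda^2$ and divided by $\mu_g(1+T)$, must be swallowed by the $-\tfrac{\alpha_k}{4}\|\hat G_k\|^2$ term preserved from Proposition~\ref{prop:upl}. This imposes a compatibility constraint among $\alpha_k, \lambda$, and $T$; combined with the Lipschitz step-size restriction $\alpha_k \lesssim 1/L$ from Proposition~\ref{prop:upl}, it forces $T$ to be large enough (specifically $T \asymp \epsilon^{-4}$ in the regime $\lambda \asymp \epsilon^{-1}$ of Theorem~\ref{thm:up1}) that the $\|\hat G_k\|^2$ cross-terms cleanly cancel. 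Once this balance is secured, the three summands on the right of the claimed inequality are exactly what remains.
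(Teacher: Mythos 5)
Your plan is the same as the paper's: telescope the descent lemma from Proposition~\ref{prop:upl}, control the gradient-estimation error via Lemma~\ref{lemma:gradL_bias_control}(i) together with the diminishing-step PSGD rate (Lemma~\ref{lem:SGD_sc_complexity}), derive a one-step recursion for $\mathcal{J}_{k+1}$ in terms of $\mathcal{J}_k$ and $\|x^{k+1}-x^k\|^2$, and absorb the drift-induced quadratic term into the negative $\|\hat G_k\|^2$ term preserved from the descent lemma. Two of your intermediate claims, however, misstate how this goes through.

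First, the assertion that iterating the recursion ``yields $\E[\mathcal{J}_k] \le \E[\mathcal{J}_0] + A$ uniformly in $k$'' and that this uniform bound is then substituted into the bias estimate is not how the argument closes, and it cannot be right as stated: the recursion has the forcing term $O(\alpha_k^2\E[\|\hat G_k\|^2])$, which is not a priori bounded, so iterating gives
\begin{align*}
\E[\mathcal{J}_k] \le 2^{-k}\bigl(\E[\mathcal{J}_0]+A\bigr) + \frac{34 l_{g,1}^2}{\mu_g^2}\sum_{i<k}\Bigl(\frac12\Bigr)^{k-1-i}\E\bigl[\|x^{i+1}-x^i\|^2\bigr],
\end{align*}
and the geometric sum must be carried along explicitly. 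The paper never claims a uniform bound on $\E[\mathcal{J}_k]$; it substitutes this full expression into $\sum_k \frac{\alpha_k}{1+T}(A+\E[\mathcal{J}_k])$, swaps the order of summation, and only then cancels the resulting $\sum_k \alpha_k\|x^{k+1}-x^k\|^2$ against $\sum_k \alpha_k^{-1}\E[\|x^{k+1}-x^k\|^2]$ in the quantity $(*)$. If you first assert a uniform bound and ``substitute'' it, you have already implicitly discarded the very terms the absorption step is supposed to handle.

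Second, the absorption does not ``force $T\asymp\epsilon^{-4}$.'' The constraint imposed by the cancellation is on the outer step size: one needs $1 - \frac{4\cdot 8\cdot 68\,C^3 l_{g,1}^2/\lambda}{\mu_g^3}\alpha_k^2 \ge 0$, i.e.\ a smallness condition on $\alpha_k$ in terms of $\lambda, C, l_{g,1}, \mu_g$. The only role of $T$ at the level of Proposition~\ref{prop:upeq} is to ensure the one-step contraction factor $\tfrac12$, which merely requires $\mu_g(1+T)\ge 16C/\lambda = O(1)$. The scaling $T\asymp\epsilon^{-4}$ enters only downstream, in the proof of Theorem~\ref{thm:up1}, to make the bias term $(\E[\mathcal{J}_0]+A)\frac{16C^3/\lambda}{\mu_g(1+T)} \asymp \lambda^2/T$ fall below $\epsilon^2$; it is a tuning choice applied to the inequality you are trying to prove, not a hypothesis needed to derive it.
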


\begin{proof}
    From Proposition~\ref{prop:upl}, we get
    \begin{align}\label{eq:O6_bd_ineq1}
    \sum_{k=0}^{n-1} \frac{\alpha_{k} }{2} \E[ \lVert \nabla \mathcal{L}_{\lambda}^{*}(x^{k}) \rVert^{2}  &\le  \E[  \mathcal{L}_{\lambda}^{*}(x^{0}) ] - \inf_{x\in \R^{d_{x}}} \E[  \mathcal{L}_{\lambda}^{*}(x) ]  \\
    &\qquad - \frac{1}{4 }\underbrace{\sum_{k=0}^{n-1} \left( \alpha_{k}^{-1} \E[\lVert x^{k+1}-x^{k} \rVert^{2}] -  2 \alpha_{k} \E[\lVert \hat{G}_k - \nabla \mathcal{L}_{\lambda}^{*}(x^{k})  \rVert^{2}]  \right)}_{(*)}. \nonumber
    \end{align}
    Below, we will show that the sum $(*)$ above is uniformly lower bounded, which is enough to conclude.

    Note that $G_k$ is the estimated gradient $\nabla \mathcal{L}_{\lambda}^{*}(x^{k})$ with the only source of error being approximation errors in $y^{k,T}\approx y_{\lambda}^{*}(x^{k})$ and $z^{k,T}\approx y^{*}(x^{k})$. The second quantity,  $\hat{G}_k$, is the actual estimated gradient we use in Algorithm \ref{algo:penalty_coupling_yz}, with independent random noise $\xi_{k,T}^{x}, \xi_{k,T}^{xy}, \xi_{k,T}^{xz}$ being the additional source of error.

    
    Using \eqref{eq:gradient_estimation_error_pf3} in Lemma~\ref{lemma:gradL_bias_control}, and Lemma~\ref{lemma:Lgrad_variance}, 
    \begin{align}
    \label{eq:ggg}
       \E[\lVert \hat{G}_k - \nabla \mathcal{L}_{\lambda}^{*}(x^{k}) \,|\, \mF_{k}  \rVert^{2}]   \le 2\sigma_{x}^{2} + 4C^{2} \left( \E\left[ \lVert y_{\lambda}^{*}(x^{k}) - y^{k,T} \rVert^{2} \,|\, \mF_{k}  \right] +  \E\left[ \lVert y^{*}(x^{k}) - z^{k,T} \rVert^{2}\,|\, \mF_{k} \right] \right)
    \end{align}
    where $\sigma_x^2:= \frac{\sigma_{f}^{2} + 2\lambda^{2} \sigma_{g}^{2}}{M}$ and $C=l_{f,1} + \lambda l_{g,1}$.
    Recall that $y^{k+1}=y^{k,T}$ is obtained by minimizing $\lambda^{-1} \mathcal{L}_{\lambda}(x,\cdot)$ using a SGD with an unbiased gradient estimator with variance $\lambda^{-2} \sigma_{f}^{2} + \sigma_{g}^{2}=O(1)$ (see Algorithm \ref{algo:penalty_coupling_yz}). The objective $\lambda^{-1} \mathcal{L}_{\lambda}(x,\cdot)$ is $(\mu_{g}/2)$-strongly convex (by Lemma \ref{lem:L_strong_convex}) and $C/\lambda=O(1)$-smooth where $C:= l_{f,1} + \lambda l_{g,1}$. 
    Then, we choose  the  inner-loop step-size 
    \begin{align}
        \gamma_{t} = \left( \frac{2}{\mu_{g}} + \frac{\lambda}{C} \right) \frac{1}{ 1+t}.
    \end{align}
    We then apply Lemma \ref{lem:SGD_sc_complexity} with $\mu=\mu_{g}/2$, $L=C/\lambda=O(1)$, $\beta=\frac{2}{\mu_{g}} + \frac{\lambda}{C}=O(1)$, $\sigma^{2}=\lambda^{-2} \sigma_{f}^{2} + \sigma_{g}^{2}=O(1)$,  $\gamma = 1$ 
    and the constraint set $\mathcal{B}$ to be the radius $2r/3$-ball around $\hat{y}(x)$, which contains $y^{*}(x)$ by the hypothesis. (When $r=\infty$, $\mathcal{B}$ becomes the whole space.) This gives us  \begin{align}\label{eq:y_SGD_bd_pf11}
        \E[\lVert y^{k,T} - y_{\lambda}^{*}(x^{k}) \rVert^{2} \,|\, \mF_{k}] \le \frac{  \max\left\{ 
        \tilde{C},\,  \lVert y^{k,0} - y_{\lambda}^{*}(x^{k}) \rVert^{2} \right\} }{ 1 + T}
    \end{align}
    for some constant $\tilde{C}=O(1)$. 
    
    Similarly, since $g(x,\cdot)$ is $\mu_{g}$-strongly convex and $l_{g,1}$-smooth, 
     \begin{align}\label{eq:z_SGD_bd_pf1}
        \E[\lVert z^{k,T} - y^{*}(x^{k}) \rVert^{2} \,|\, \mF_{k}] \le \frac{  \max\left\{ \bar{C},\,  \lVert z^{k,0} - y^{*}(x^{k}) \rVert^{2} \right\} }{ 1 + T}
    \end{align}
     for some constant $\bar{C}=O(1)$. 
    Then bounding the maximum of nonnegative quantities by their sum and combining \eqref{eq:ggg}, \eqref{eq:y_SGD_bd_pf11}, and \eqref{eq:z_SGD_bd_pf1}, 
    \begin{align}
      & \E\left[ \lVert G_k - \nabla \mathcal{L}_{\lambda}^{*}(x^{k}) \rVert^{2} \,\bigg|\, \mF_{k}  \right]  \le \frac{8C^{3}/\lambda}{\mu_{g}(1+T)} \bigg[ \underbrace{ \tilde{C} + \bar{C} }_{=:A=O(1)} +  \mathcal{J}_{k}  \bigg]. \label{eq:grad_estimation_error1}
    \end{align}

    Next, we derive a recursion for $\E[\mathcal{J}_{k}]$.  By Young's inequality, \eqref{eq:y_SGD_bd_pf11}, and that $y^{*}_{\lambda}$ is $(4l_{g,1}/\mu_{g})$-Lipschitz continuous (see Lemma \ref{lem:hyper_error}), we obtain 
    \begin{align}
       &\E[  \lVert y^{k+1,0} - y_{\lambda}^{*}(x^{k+1}) \rVert^{2} \,|\, \mF_{k}] \\
       & \qquad  \le 2  \E[ \lVert y^{k,T} - y_{\lambda}^{*}(x^{k}) \rVert^{2}\,|\, \mF_{k}] + 2 \E[ \lVert y_{\lambda}^{*}(x^{k+1}) - y_{\lambda}^{*}(x^{k}) \rVert^{2} \,|\, \mF_{k}] \\
        & \qquad  \le \frac{8(C/\lambda) \max\left\{ \tilde{C},\,  \lVert y^{k,0} - y_{\lambda}^{*}(x^{k}) \rVert^{2} \right\} }{ \mu_{g} (1 + T)} + \frac{32 l_{g,1}^{2}}{\mu_{g}^{2}} \E[ \lVert x^{k+1}-x^{k} \rVert^{2} \,|\, \mF_{k}  ].
    \end{align}
    Similarly, we also have 
    \begin{align}
       &\E[  \lVert z^{k+1,0} - y^{*}(x^{k+1}) \rVert^{2} \,|\, \mF_{k}] \\
        & \qquad  \le \frac{4 l_{g,1} \max\left\{ \bar{C},\,  \lVert z^{k,0} - y^{*}(x^{k}) \rVert^{2} \right\} }{ \mu_{g} (1 + T)} + \frac{2 l_{g,1}^{2}}{\mu_{g}^{2}} \E[ \lVert x^{k+1}-x^{k} \rVert^{2} \,|\, \mF_{k}  ].
    \end{align}
    Suppose $T$ is large enough so that $\mu_{g}(1+T) \ge 16C/\lambda$. Then using \eqref{eq:y_SGD_bd_pf11} and \eqref{eq:z_SGD_bd_pf1}, 
    \begin{align}
       \E[\mathcal{J}_{k+1} \,|\, \mF_{k}] \le  \frac{\mathcal{J}_{k}}{2} + \frac{A}{2} +   \frac{34 l_{g,1}^{2}}{\mu_{g}^{2}} \E[ \lVert x^{k+1}-x^{k} \rVert^{2} \,|\, \mF_{k}  ],
    \end{align}
    where $A$ is the constant defined in \eqref{eq:grad_estimation_error1}. Taking the full expectation and by induction, we get 
    \begin{align}\label{eq:gradient_estimation_error_pf4}
        \E[ \mathcal{J}_{k} ]  \le 2^{-k}(\E[\mathcal{J}_{0}] + A) +  \frac{34 l_{g,1}^{2}}{\mu_{g}^{2}} \sum_{i=0}^{k} \left( \frac{1}{2}\right)^{k-1-i} \E[ \lVert x^{i+1}-x^{i} \rVert^{2}  ].
    \end{align}
    It follows that, combining  \eqref{eq:grad_estimation_error1} and \eqref{eq:gradient_estimation_error_pf4}, we get 
    \begin{align}
      \frac{\mu_{g}}{8C^{3}/\lambda}  \sum_{k=0}^{n-1} \alpha_{k} \E\left[ \lVert G_k - \nabla \mathcal{L}_{\lambda}^{*}(x^{k}) \rVert^{2} \right] &\le  \sum_{k=0}^{n-1} \frac{ \alpha_{k}}{(1+T)} \left( A+\E[ \mathcal{J}_{k} ] \right) \\
        &\le (\E[\mathcal{J}_{0}] + A) \sum_{k=0}^{n-1} \frac{2  \alpha_{k}}{(1+T)}   + \frac{68  l_{g,1}^{2}}{\mu_{g}^{2}} \sum_{k=0}^{n-1} \alpha_{k} \lVert x^{k+1}-x^{k} \rVert^{2}.
    \end{align}
   Thus, we deduce for $\alpha_{k}$ sufficiently small,
    \begin{align}
    (*) 
    &\ge -(\E[\mathcal{J}_{0}] + A) \sum_{k=0}^{n-1}  \frac{16\alpha_{k}^{2} C^{3}/\lambda}{\mu_{g}(1+T)} +\sigma_{x}^{2} \sum_{k=0}^{n-1}\alpha_{k}   \\
    &\qquad + \sum_{k=0}^{n-1}  \alpha_{k}^{-1} \E[\lVert x^{k+1}-x^{k} \rVert^{2}] \left( 1 - \frac{4\cdot 8\cdot 68 C^{3} l_{g,1}^{2}/\lambda}{\mu_{g}^{3}} \alpha_{k}^{2} \right) \\
    &\ge  -(\E[\mathcal{J}_{0}] + A) \sum_{k=0}^{n-1}  \frac{16\alpha_{k}^{2}C^{3}/\lambda}{\mu_{g}(1+T)} +\sigma_{x}^{2} \sum_{k=0}^{n-1}\alpha_{k}. 
    \end{align}
    Combining the above with \eqref{eq:O6_bd_ineq1}, we conclude \eqref{eq:gradient_estimation_error_pf5}.
\end{proof}


Now, we prove the upper bound $O(\epsilon^{-6})$. 

\medskip

\noindent\textbf{Proof of Theorem~\ref{thm:up1}: } Since $\lambda= O(\epsilon^{-1})$, by Lemma \ref{lem:hyper_error}, $\lVert \nabla \mathcal{L}_{\lambda}^{*}(x^{k}) \rVert = O(\epsilon)$ implies $\lVert \nabla F(x^{k}) \rVert = O(\epsilon)$. Hence it is enough to show that Algorithm \ref{algo:penalty_coupling_yz} finds an $\epsilon$-stationary point of the surrogate objective $\mathcal{L}_{\lambda}^{*}$ within $O(\epsilon^{-6})$ iterations.

    For $M\asymp \lambda^{4}=\epsilon^{-4}$, we have 
    $\sigma_{x}^{2} = \frac{\sigma_{f}^{2} + 2\lambda^{2} \sigma_{g}^{2}}{M} = O(\epsilon^{2})$, 
    Then, by Proposition~\ref{prop:upeq}, to obtain $\epsilon$-stationary point, it is enough to have (recall that $C=\Theta(\lambda)$, $A=O(1)$, and $\sigma_{x}^{2}=\Theta(\epsilon^{2})$) 
    \begin{align}
    \frac{O(1)}{\sum_{k=0}^{n-1} \alpha_{k}} +   \frac{\sigma_{x}^{2} \sum_{k=0}^{n-1} \alpha_{k}}{\sum_{k=0}^{n-1}\alpha_{k}} + \frac{O(\lambda^{2} \sum_{k=0}^{n-1} \frac{\alpha_{k}}{T})}{\sum_{k=0}^{n-1} \alpha_{k}} \le \epsilon^{2}. 
    \end{align}
    Then the above follows from 
	\begin{align}
	 \frac{O(1)}{\sum_{k=0}^{n-1} \alpha_{k}} +	\frac{O(\lambda^{2})}{T} + O(\sigma_{x}^{2})\le \epsilon^{2}.
	\end{align}
	Now we set $T \asymp \epsilon^{-4}$ and recall that $\lambda=\epsilon^{-1}$ and  $\sigma_{x}^{2}=O(\epsilon^{2})$. Therefore the above inequality holds if we choose $\alpha_{k}\equiv \alpha=\Theta(1)$ and $n\asymp \epsilon^{-2}$. The total complexity is $O(\epsilon^{-2}\cdot (M+T)) = O(\epsilon^{-6})$, as desired. 
 \hfill$\Box$

\subsection{$O(\epsilon^{-4})$ upper bound}
\label{ap:up2}

In this section, we provide the proofs of  Theorem~\ref{thm:up2} and Proposition~\ref{prop:final_upper_bound}. Throughout this section, we assume that Assumptions \ref{assumption:nice_functions}-\ref{assumption:hessian_lipschitz_g}, \eqref{eq:unbiased_gradients}, \eqref{eq:bounded_variance_gradients}, and \eqref{eq:mean_squared_lipschitz}
hold and let $\gamma_t \equiv \gamma$ for all $t$.

As illustrated in Section~\ref{sec:4up}, our new key observation in Lemma~\ref{lemma:gradL_bias_control} is the estimation of $\|\grad \mL_{\lambda}^* (x^k) - G_k\|$ in terms of $\|y^{k+1} - y_{\lambda}^* (x^k)\|$ and $\|v^{k+1} - v^*(x^k)\|$. In the subsequent section, we estimate each term. Let 
\begin{align}
\I_k := \|y^k - y_{\lambda}^*(x^k)\|^2 \hbox{ and } \W_k:= \|v^{k} - v^*(x^k)\|^2,
\end{align}

On the other hand, we have the additional projection step for $z$ in the inner loop of Algorithm~\ref{algo:penalty_coupling_yz}:
$$z^{k,t+1} \leftarrow \Proj{\mathbb{B}(y^{k, t+1}, r_\lambda)}{\bar{z}^{k,t} + \Delta_y},$$
where $\Delta_y = y^{k,t+1} - \bar{y}^{k,t+1}$ and therefore $\bar{y}^{k,t} - \bar{z}^{k,t} = y^{k,t+1} - (\bar{z}^{k,t}+\Delta_y)$.  
In Appendix~\ref{ap:vv}, we verify that with the appropriate choice of $r_\lambda$ the projection step makes $\|v^{k+1} - v^*(x^k)\|$ smaller, which yields the desired result.




\subsubsection{Descent lemma for $y$}
\label{ap:ii}



\begin{proposition}
\label{lem:i1}
For given $\beta \geq 2$, assume that 
\begin{align}
    \label{eq:0ii}
    \lambda > \frac{l_{f,1}}{l_{g,1}}, \ \ 
    \gamma \in \left(0, \frac{1}{4 l_{g,1}}\right) \hbox{ and }
        \left(1 - \frac{\mu_g \gamma}{2}\right)^T < \frac{1}{2 \beta}.
    \end{align}
Then, we have
    \begin{align}
    \label{eq:1ii}
      \E[ \|y^{k+1} - y_{\lambda}^* (x^k)\| ^{2} \,|\, \mathcal{F}_{k}]  \le \frac{1}{2 \beta}  \I_{k}+ \frac{4 \gamma}{\mu_g} \Var(\hat{\nabla}_y L_\lambda) \frac{1}{\lambda^2}
    \end{align}
    where 
    \begin{align}
        \Var(\hat{\nabla}_y L_\lambda):= \sup_{x,y} \Var(\hat{\grad}_y f(x,y; \zeta) + \lambda \hat{\grad}_y g(x, y; \xi)).
    \end{align}
    Furthermore, 
    \begin{align}
    \label{eq:2ii}
        \sum_{k=0}^n \E[ \I_{k} ] \leq 2\I_0 +  \frac{8 \gamma}{\mu_g} \Var(\hat{\nabla}_y L_\lambda) \frac{n}{\lambda^2} + \frac{64 l_{g,1}^{2}}{\mu_{g}^{2}} \sum_{k=0}^{n-1}  \E[ \lVert \hat{G}_k \rVert^{2} ].
    \end{align}
\end{proposition}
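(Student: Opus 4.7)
The plan is to view the inner loop for $y$ in Algorithm~\ref{algo:penalty_coupling_yz} as projected stochastic gradient descent on the function $\phi_k(y) := \lambda^{-1} \mL_\lambda(x^k, y)$, whose unique minimizer is $y_\lambda^*(x^k)$. Observe that $h_y^{k,t}$ is an unbiased estimator of $\nabla \phi_k(y^{k,t})$ with variance $\lambda^{-2} \Var(\hat{\nabla}_y L_\lambda)$. Under $\lambda > l_{f,1}/l_{g,1}$, $\phi_k$ is smooth with constant at most $2 l_{g,1}$, and under the standing scaling $\lambda = \Omega(\epsilon^{-1})$ of Theorem~\ref{thm:up2}, $\phi_k$ is strongly convex with parameter at least $\mu_g/2$. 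A preliminary step is to check that $y_\lambda^*(x^k)$ lies inside the projection ball $\mathbb{B}(\hat{y}(x^k), 2r/3)$: the $y^*$-aware condition gives $\|\hat{y}(x^k) - y^*(x^k)\| \le r/2$, while $\lambda \ge 6 l_{f,0}/(\mu_g r)$ combined with Lemma 3.1 of \cite{kwon2023fully} gives $\|y^*(x^k) - y_\lambda^*(x^k)\| \le l_{f,0}/(\mu_g \lambda) \le r/6$. This renders the projection non-expansive toward $y_\lambda^*(x^k)$, so in particular $\|y^{k,0} - y_\lambda^*(x^k)\|^2 \le \I_k$.

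Given this set-up, the next step is to apply the standard one-step PSGD contraction (valid because $\gamma < 1/(4 l_{g,1})$ is at most half of $1/L_{\phi_k}$):
\begin{align*}
    \E[\|y^{k,t+1} - y_\lambda^*(x^k)\|^2 \mid \mathcal{F}_{k,t}] \le \bigl(1 - \tfrac{\mu_g \gamma}{2}\bigr) \|y^{k,t} - y_\lambda^*(x^k)\|^2 + \frac{\gamma^2}{\lambda^2} \Var(\hat{\nabla}_y L_\lambda).
\end{align*}
Unrolling this $T$ times and summing the resulting geometric series of noise contributions gives
\begin{align*}
    \E[\|y^{k,T} - y_\lambda^*(x^k)\|^2 \mid \mathcal{F}_k] \le \bigl(1 - \tfrac{\mu_g \gamma}{2}\bigr)^T \I_k + \frac{2\gamma}{\mu_g \lambda^2} \Var(\hat{\nabla}_y L_\lambda),
\end{align*}
and invoking the hypothesis $(1 - \mu_g \gamma/2)^T < 1/(2\beta)$ yields \eqref{eq:1ii} (with a small constant slack absorbed into the factor $4$ appearing there).

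For the cumulative bound \eqref{eq:2ii}, the idea is to convert \eqref{eq:1ii} into a geometric recursion for $\I_k$ itself by accounting for the shift $y_\lambda^*(x^k) \to y_\lambda^*(x^{k+1})$. Applying Young's inequality and the $(4 l_{g,1}/\mu_g)$-Lipschitz continuity of $y_\lambda^*$ (exactly as in the proof of Proposition~\ref{prop:upeq}), and then invoking \eqref{eq:1ii} with $\beta \ge 2$, gives
\begin{align*}
    \E[\I_{k+1} \mid \mathcal{F}_k] \le \tfrac{1}{2} \I_k + \frac{8\gamma}{\mu_g \lambda^2} \Var(\hat{\nabla}_y L_\lambda) + \frac{32 l_{g,1}^2}{\mu_g^2} \E[\|\hat{G}_k\|^2 \mid \mathcal{F}_k]
\end{align*}
after absorbing $\alpha^2 \le 1$. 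Telescoping this $\tfrac{1}{2}$-contractive recursion, as in the derivation of \eqref{eq:gradient_estimation_error_pf4}, and summing over $k = 0, \ldots, n$ then delivers \eqref{eq:2ii}. The main subtlety I expect is the projection bookkeeping: one must carefully verify that, with the chosen radii $r, r_\lambda$ and the scale $\lambda = \Omega(\epsilon^{-1})$, the projection in Algorithm~\ref{algo:penalty_coupling_yz} is genuinely non-expansive toward $y_\lambda^*(x^k)$ and the initialization gap satisfies $\|y^{k,0} - y_\lambda^*(x^k)\|^2 \le \I_k$. Once these geometric facts are pinned down, the rest is standard PSGD analysis plus a geometric-series telescoping.
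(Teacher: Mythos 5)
Your proposal is correct and follows essentially the same route as the paper's proof: both view the inner $y$-loop as projected SGD on the rescaled objective $\lambda^{-1}\mathcal{L}_\lambda(x^k,\cdot)$, check that the minimizer $y_\lambda^*(x^k)$ lies in the projection ball $\mathbb{B}(\hat{y}(x^k), 2r/3)$ so that the projection is nonexpansive, unroll the standard one-step PSGD contraction to obtain \eqref{eq:1ii}, and then combine Young's inequality with the $(4l_{g,1}/\mu_g)$-Lipschitzness of $y_\lambda^*$ to produce a $\beta^{-1}$-contractive recursion for $\mathcal{I}_k$ that is telescoped to give \eqref{eq:2ii}. One small observation: feeding the already-loosened bound \eqref{eq:1ii} into the Young step gives a variance coefficient of roughly $16\gamma/(\mu_g\lambda^2)$ after telescoping rather than the stated $8\gamma/(\mu_g\lambda^2)$; the paper reaches the tighter constant by plugging the intermediate PSGD noise term (before the final doubling in \eqref{eq:1ii}) directly into the recursion, but this is a constant-factor bookkeeping detail, not a substantive difference.
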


\begin{proof}
    Recall from Lemma \ref{lem:L_strong_convex} that $\mathcal{L}_{\lambda}(x,\cdot)$ is $(\lambda \mu_{g}/2)$-strongly convex. In addition, for given $x$, $\mathcal{L}_{\lambda}(x,\cdot)$ is $(l_{f,1} + \lambda l_{g,1})$-smooth in $y$. 
    Then, Applying 
    Lemma~\ref{lem:SGD_sc_complexity} with the objective function $y \mapsto \lambda^{-1}\mathcal{L}_{\lambda}(x,y)$, $\mu = \mu_g/2$, $\sigma^2 = \Var(\hat{\nabla}_y L_\lambda)$, $L = 4 l_{g,1}$ and the constraint set $\mathcal{B}$ to be the radius $2r/3$-ball around $\hat{y}(x)$, which contains $y^{*}(x)$ by the hypothesis, we get
\begin{align}\label{eq:y_SGD_bd_pf1}
        \E[\lVert y^{k,T} - y_{\lambda}^{*}(x^{k}) \rVert^{2} \,|\, \mathcal{F}_{k}] \le 
        \left(1 - \frac{\mu_g \gamma}{2}\right)^T \lVert y_{k,0} - y_{\lambda}^{*}(x_{k}) \rVert^{2} + \frac{4 \gamma}{\lambda^2 \mu_g} \Var(\hat{\nabla}_y L_\lambda).
    \end{align}
    Thanks to our assumptions in \eqref{eq:0ii}, we conclude \eqref{eq:1ii}.

    Next, let us estimate $\I_{k+1}$ in terms of $\I_k$. 
   Young's inequality, the $(4l_{g,1}/\mu_{g})$-Lipschitz continuity of $y^{*}_{\lambda}$ in Lemma \ref{lem:hyper_error}, and \eqref{eq:1ii}
       yield
    \begin{align}
       \E[  \I_{k+1} \,|\, \mathcal{F}_{k}] &   \le 2  \E[ \lVert y^{k+1} - y_{\lambda}^{*}(x^{k}) \rVert^{2}\,|\, \mathcal{F}_{k}] + 2 \E[ \lVert y_{\lambda}^{*}(x^{k+1}) - y_{\lambda}^{*}(x^{k}) \rVert^{2} \,|\, \mathcal{F}_{k}] \\
        & \qquad  \le \frac{1}{\beta}  \I_{k}+ \frac{4 \gamma}{\lambda^2 \mu_g} \Var(\hat{\nabla}_y L_\lambda)+ \frac{32 l_{g,1}^{2} \alpha^2}{\mu_{g}^{2}} \E[ \lVert \hat{G}_k \rVert^{2} \,|\, \mathcal{F}_{k}].
    \end{align}
    Taking the full expectation and by induction, we get \begin{align}
        \E[ \I_{k} ]  &\le \beta^{-k}  \I_{0} +   \sum_{i=0}^{k-1} \beta^{-k+1+i} \left( \frac{32 l_{g,1}^{2} \alpha^2}{\mu_{g}^{2}}\E[ \lVert \hat{G}_i \rVert^{2}] + \frac{4 \gamma}{\lambda^2 \mu_g} \Var(\hat{\nabla}_y L_\lambda) \right),\\
        &\leq \beta^{-k} \I_0 + \frac{1}{1 - 1/\beta} \frac{4 \gamma}{\lambda^2 \mu_g} \Var(\hat{\nabla}_y L_\lambda) + 
        \frac{32 l_{g,1}^{2} \alpha^2}{\mu_{g}^{2}} \sum_{i=0}^{k-1} \beta^{-k+1+i} \E[ \lVert \hat{G}_i \rVert^{2}].
    \end{align}
    As $\beta > 2$, we have $\frac{1}{1 - 1/\beta} \leq 2$ and we conclude \eqref{eq:2ii}.
\end{proof}



\subsubsection{Estimates for $v_k^*$}
\label{sec:v}
\label{ap:vv}

Next, we prove Proposition~\ref{lemma:projection_smaller_v}. Recall that $\bar{z}^{k,t} = z^{k,t} - \gamma_{t} \hat{\nabla}_y g(x^k, z^{k,t}; \xi_{k,t}^{y})$ (before projection to the ball around $y^{k+1}$), and $\bar{v}^{k,t} := \bar{y}^{k,t} - \bar{z}^{k,t}$. Also recall that $\Delta_y = y^{k,t+1} - \bar{y}^{k,t+1}$ and therefore $\bar{v}^{k,t} = y^{k,t+1} - (\bar{z}^{k,t}+\Delta_y)$. Note that for an appropriate choice of $t$ satisfying $\|u\| \leq t \leq \|v\|$, the projection of $v$ to a ball of radius $t$ makes the distance to $u$ smaller.


\begin{lemma}
    \label{lemma:projection_shortened_distance}
    For any $u, v \in \mathbb{R}^{d_y}$ with $\|v\| \ge t$ and $\|u\| \le t$ for some $r > 0$, the following holds:
    \begin{align*}
        \left\| \frac{t v}{\|v\|} - u \right\| \le \|v - u\|.
    \end{align*}
\end{lemma}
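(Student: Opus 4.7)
The inequality is, at heart, the nonexpansiveness of Euclidean projection onto the closed ball $\mathbb{B}(0, t)$: since $\|v\| \ge t$, the point $tv/\|v\|$ is exactly the projection of $v$ onto $\mathbb{B}(0,t)$, and since $\|u\| \le t$, the point $u$ lies in this ball and hence equals its own projection. The standard fact that projection onto a closed convex set is $1$-Lipschitz in the Euclidean norm then yields the claim immediately. I would state the lemma in this conceptual form and then, for self-containment, back it up with a short direct computation rather than citing nonexpansiveness as a black box.

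The plan for the direct computation is to expand both sides and subtract. The $\|u\|^2$ terms cancel, leaving
\begin{align*}
\|v-u\|^2 - \left\|\tfrac{tv}{\|v\|} - u\right\|^2 = (\|v\|^2 - t^2) - 2\langle v, u\rangle \cdot \tfrac{\|v\| - t}{\|v\|}.
\end{align*}
I would then factor out $(\|v\|-t)/\|v\|$, which is nonnegative by $\|v\| \ge t$, and bound the residual factor $\|v\|(\|v\|+t) - 2\langle v,u\rangle$ from below by applying Cauchy--Schwarz and then the hypothesis $\|u\| \le t$. This gives a residual of the form $\|v\|(\|v\| - t)$, which is again nonnegative by $\|v\|\ge t$.

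There is no real obstacle here; the lemma is elementary. The only point of care is to keep the factorization clean so that the two hypotheses are used in a non-circular way: $\|v\| \ge t$ enters once in the nonnegativity of the prefactor and once in combination with $\|u\| \le t$ to bound the inner product inside the residual. After this lemma is in hand, it is applied in Proposition \ref{lemma:projection_smaller_v} with $v = \bar{v}^{k,t} - v_k^*$ and a suitable choice of $t$ so that the projection step $z^{k,t+1} \leftarrow \Pi_{\mathbb{B}(y^{k,t+1}, r_\lambda)}\{\bar z^{k,t} + \Delta_y\}$ does not enlarge the gap to $v_k^*$, which is the mechanism that reduces the bias in Lemma \ref{lemma:gradL_bias_control0} from the naive $\lambda \|v^{k+1}\|$ scale to the $\lambda \|v^{k+1} - v_k^*\|$ scale used in Theorem \ref{thm:up2}.
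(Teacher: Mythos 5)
Your proof is correct, and there is nothing in the paper to compare against: Lemma~\ref{lemma:projection_shortened_distance} is stated without proof, evidently because it is just the nonexpansiveness of Euclidean projection onto the closed ball $\mathbb{B}(0,t)$, which is exactly the framing you give. Your direct computation is also complete: the difference of squared norms factors as $\frac{\|v\|-t}{\|v\|}\bigl(\|v\|(\|v\|+t) - 2\langle v,u\rangle\bigr)$, the prefactor is nonnegative since $\|v\| \ge t$, and the bracket is bounded below by $\|v\|(\|v\|+t) - 2\|v\|\|u\| \ge \|v\|(\|v\|-t) \ge 0$ via Cauchy--Schwarz and $\|u\| \le t$, with $\|v\| \ge t > 0$ also ensuring $v \ne 0$ so that $tv/\|v\|$ is well defined. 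One minor slip in your closing remark on the downstream use: in the proof of Proposition~\ref{lemma:projection_smaller_v} the lemma is invoked with $v = \bar v^{k,t}$, $u = v_k^*$, and radius $t = r_\lambda$ (the containment $\|v_k^*\| \le r_\lambda$ supplying the hypothesis $\|u\| \le t$), not with $v = \bar v^{k,t} - v_k^*$ as you wrote; this does not affect the correctness of your proof of the lemma itself.
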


    

\noindent\textbf{Proof of Proposition~\ref{lemma:projection_smaller_v}:} 
First of all, if for $\bar{v}^{k,t} = \bar{y}^{k,t} - \bar{z}^{k,t}$, $\|\bar{v}^{k,t}\| \leq r_\lambda$, then the equality holds in \eqref{eq:pv}. Otherwise, suppose that $$\|\bar{v}^{k,t}\| > r_\lambda.$$ Recall from Lemma~\ref{lem:y_bd} that for all $x \in \mathbb{R}^{d_x}$, 
    \begin{align}
    \label{eq:rlambda_bd}
        \|v^*(x)\| = \|y_{\lambda}^*(x) - y^*(x)\|  \le \frac{l_{f,0}}{\lambda \mu_g}=r_\lambda.
    \end{align}
    Applying Lemma~\ref{lemma:projection_shortened_distance} with $r = r_\lambda$, we conclude \eqref{eq:pv}.
\hfill$\Box$


Next, we obtain the contraction of $v_{k+1}^* - v_k^*$.


\begin{lemma}
    \label{lemma:v_star_contraction}
    For all $k$, we have
    \begin{align*}
        \|v^*(x^{k+1}) - v^*(x^{k})\|^2 = \|v_{k+1}^* - v_k^*\|^2 &\lesssim \frac{l_{g,1}^2 l_v^2}{\mu_g^4 \lambda^2} \|x^{k+1} - x^k\|^2 + \frac{l_{g,2}^2 l_{f,0}^4}{\mu_g^6 \lambda^4} 
    \end{align*}
    where $l_v := l_{g,1} + \frac{l_{f,0}l_{g,2}}{ \mu_g}$.
\end{lemma}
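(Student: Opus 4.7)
The plan is to derive an explicit ``$\lambda^{-1}$-representation'' of $v^{*}(x)$ from the first-order optimality conditions and then bound the change of this representation between $x^k$ and $x^{k+1}$ by careful matrix perturbation arguments. Combining the optimality conditions $\nabla_y g(x, y^*(x))=0$ and $\nabla_y f(x, y^*_\lambda(x)) + \lambda \nabla_y g(x, y^*_\lambda(x)) = 0$, and applying the fundamental theorem of calculus to $\nabla_y g(x, y^*_\lambda(x)) - \nabla_y g(x, y^*(x))$, one obtains
\begin{align*}
\bar{A}(x)\, v^{*}(x) = -\tfrac{1}{\lambda}\,\nabla_y f(x, y^*_\lambda(x)), \qquad \bar{A}(x) := \int_0^1 \nabla_{yy}^2 g(x, y^*(x) + t\, v^*(x))\, dt.
\end{align*}
By the strong convexity of $g(x,\cdot)$ in Assumption \ref{assumption:nice_functions}, $\bar{A}(x) \succeq \mu_g I$, so $v^{*}(x) = -\lambda^{-1}\bar{A}(x)^{-1}\nabla_y f(x, y^*_\lambda(x))$; this also re-derives the a priori bound $\|v^*(x)\|\le r_\lambda$ used earlier.

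I would then subtract this identity at $x^{k+1}$ and $x^k$ and apply the standard identity $B^{-1}-A^{-1}=B^{-1}(A-B)A^{-1}$ to split
\begin{align*}
v^{*}_{k+1}-v^{*}_{k} &= -\lambda^{-1}\bar{A}(x^{k+1})^{-1}\bigl[\nabla_y f(x^{k+1},y^*_{\lambda,k+1})-\nabla_y f(x^k,y^*_{\lambda,k})\bigr] \\
&\quad -\lambda^{-1}\bar{A}(x^{k+1})^{-1}\bigl(\bar{A}(x^k)-\bar{A}(x^{k+1})\bigr)\bar{A}(x^k)^{-1}\,\nabla_y f(x^k,y^*_{\lambda,k}).
\end{align*}
The first bracket is bounded by $l_{f,1}(\|x^{k+1}-x^k\|+\|y^*_{\lambda,k+1}-y^*_{\lambda,k}\|)$, where $y^{*}_{\lambda}$ is $O(l_{g,1}/\mu_g)$-Lipschitz (Lemma \ref{lem:hyper_error} applied to the $(\lambda\mu_g/2)$-strongly convex $\mathcal{L}_\lambda(x,\cdot)$). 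For the second term, Assumption \ref{assumption:hessian_lipschitz_g} gives, for every $t\in[0,1]$,
\begin{align*}
\|\nabla_{yy}^{2}g(x^{k+1},y^*(x^{k+1})+tv^*_{k+1}) - \nabla_{yy}^{2}g(x^{k},y^*(x^{k})+tv^*_{k})\| \le l_{g,2}\bigl((1+L_{y^{*}})\|x^{k+1}-x^k\| + t\|v^*_{k+1}-v^*_k\|\bigr),
\end{align*}
so, using the crude a priori bound $\|v^{*}_{k+1}-v^{*}_{k}\|\le 2r_\lambda$ to control the last term inside the integral, one gets $\|\bar{A}(x^{k+1})-\bar{A}(x^k)\|\lesssim \tfrac{l_{g,2}\,l_{g,1}}{\mu_g}\|x^{k+1}-x^k\| + l_{g,2}r_\lambda$. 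Combining these estimates with $\|\bar{A}(\cdot)^{-1}\|\le 1/\mu_g$ and $\|\nabla_y f\|\le l_{f,0}$ yields
\begin{align*}
\|v^{*}_{k+1}-v^{*}_{k}\| \;\lesssim\; \frac{l_{g,1}\,l_v}{\mu_g^{2}\lambda}\,\|x^{k+1}-x^k\| \;+\; \frac{l_{g,2}\,l_{f,0}\,r_\lambda}{\mu_g^{2}\lambda},
\end{align*}
and squaring (and substituting $r_\lambda=l_{f,0}/(\mu_g\lambda)$) produces precisely the two terms in the statement.

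The main obstacle is an apparent circularity: the bound on $\|\bar{A}(x^{k+1})-\bar{A}(x^k)\|$ itself involves $\|v^{*}_{k+1}-v^{*}_{k}\|$ through the integrand. I would resolve this by feeding in the trivial a priori bound $\|v^{*}_{k+1}-v^{*}_{k}\|\le 2r_\lambda$ at that step; this is what produces the additive residual $\tfrac{l_{g,2}^{2}l_{f,0}^{4}}{\mu_g^{6}\lambda^{4}}$ after squaring, while the genuine Lipschitz-like contribution from the $x$-displacement (combined with the Lipschitzness of $y^*$ and $y^*_\lambda$) yields the desired $\tfrac{l_{g,1}^{2}l_{v}^{2}}{\mu_g^{4}\lambda^{2}}\|x^{k+1}-x^k\|^{2}$ factor. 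The remaining work is bookkeeping of the constants to verify the two contributions separate cleanly into the stated form.
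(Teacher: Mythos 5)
Your proof is correct, and it follows the same high-level strategy as the paper (derive a $\lambda^{-1}$-representation of $v^{*}(x)$ from the two first-order optimality conditions, then difference it between $x^{k}$ and $x^{k+1}$), but the technical execution differs in one respect worth noting. The paper obtains the approximate pointwise representation $v^{*}(x) = -\lambda^{-1}\nabla_{yy}^{2}g(x,y^{*}(x))^{-1}\nabla_{y}f + O\!\left(l_{g,2}l_{f,0}^{2}/(\mu_g^{3}\lambda^{2})\right)$ via a Taylor expansion of $\nabla_y g$ around $y^*(x)$, absorbing the curvature into an explicit quadratic remainder controlled by $\|v^{*}(x)\|\le r_\lambda$; the subsequent differencing is then straightforwardly applied to a constant-Hessian linear term plus a junk term of order $\lambda^{-2}$. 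You instead keep an exact identity via the mean-value (fundamental-theorem-of-calculus) Hessian $\bar A(x)$, which avoids the approximation error up front but introduces a genuine circularity: $\bar A(x^{k+1})-\bar A(x^k)$ carries a $\tfrac{1}{2}\|v^{*}_{k+1}-v^{*}_{k}\|$ contribution. You correctly observe that feeding in the crude a priori bound $\|v^{*}_{k+1}-v^{*}_{k}\|\le 2r_\lambda$ (equivalently, absorbing the self-referential factor $\tfrac{l_{f,0}l_{g,2}}{2\lambda\mu_g^2}<1$ for the relevant range of $\lambda$) is enough; it produces the same additive residual $\tfrac{l_{g,2}^{2}l_{f,0}^{4}}{\mu_g^{6}\lambda^{4}}$ after squaring that the paper gets from its Taylor remainder. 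In both cases the a priori bound $\|v^*\|\le r_\lambda$ from Lemma \ref{lem:y_bd} is the crux; the Taylor route makes this explicit from the start, while your exact-identity route defers it to the contraction step. The resulting constants match up to the same bookkeeping slack already present in the paper (e.g.\ $l_{f,1}\lesssim l_{g,1}$ is silently used), so the statement is recovered as claimed.
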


\begin{proof}
    We first show that 
    \begin{align}
        v^*(x) = y_{\lambda}^*(x) - y^*(x) = - \frac{1}{\lambda} (\grad_{yy}^2 g(x,y^*(x)))^{-1} \grad_y f(x,y^*(x)) + O\left( \frac{l_{g,2} l_{f,0}^2}{\mu_g^3 \lambda^2} \right). \label{eq:vstar_basic_form}
    \end{align}
    To see this, note that
    \begin{align*}
        \grad_y g(x, y^*(x)) = 0, \quad \grad_y g(x,y^*_{\lambda} (x)) + \lambda^{-1} \grad_y f(x,y^*_{\lambda}(x)) = 0,
    \end{align*}
    and thus, 
    \begin{align*}
        -\frac{1}{\lambda} \grad_y f(x,y_{\lambda}^*(x)) &= \grad_y g(x,y^*_{\lambda} (x)) - \grad_y g(x, y^*(x)) \\
        &= \grad_{yy}^2 g(x,y^* (x)) (y^*_{\lambda}(x) - y^*(x)) + O(l_{g,2} \|y^*(x) - y_{\lambda}^*(x)\|^2) \\
        &= \grad_{yy}^2 g(x,y^* (x)) (y^*_{\lambda}(x) - y^*(x)) + O\left( \frac{l_{g,2} l_{f,0}^2}{\mu_g^3 \lambda^2} \right). 
    \end{align*}
    Multiplying both sides by $\grad^2_{yy} g(x,y^*(x))$ gives \eqref{eq:vstar_basic_form}. Thus, $v_{k+1}^* - v_k^*$ can be expressed as
    \begin{align*}
        &v_{k+1}^* - v_k^* \\&= v^*(x^{k+1}) - v^*(x^k) \\
        &= \frac{1}{\lambda} \left( \grad_{yy}^2 g(x^k, y^*(x^k))^{-1} \grad_y f(x^{k}, y^*(x^k)) - \grad_{yy}^2 g(x^{k+1}, y^*(x^{k+1}))^{-1} \grad_y f(x^{k+1}, y^*(x^{k+1})) \right) + O\left( \frac{l_{g,2} l_{f,0}^2}{\mu_g^3 \lambda^2} \right) \\
        &= \frac{1}{\lambda} \grad_{yy}^2 g(x^k, y^*(x^k))^{-1} \left( \grad_y f(x^{k}, y^*(x^k)) - \grad_y f(x^{k+1}, y^*(x^{k+1})) \right) \\
        &\quad + \frac{1}{\lambda} \left(\grad_{yy}^2 g(x^k, y^*(x^k))^{-1} - \grad_{yy}^2 g(x^{k+1}, y^*(x^{k+1}))^{-1} \right) \grad_y f(x^{k}, y^*(x^k)) + O\left( \frac{l_{g,2} l_{f,0}^2}{\mu_g^3 \lambda^2} \right) \\
        &= \frac{1}{\lambda} \left( \frac{l_{g,1}}{\mu_g} + \frac{l_{g,2}l_{f,0}}{\mu_g^2} \right) \cdot O\left(\|x^k - x^{k+1}\| + \|y^*(x^k) - y^*(x^{k+1})\| \right) + O\left( \frac{l_{g,2} l_{f,0}^2}{\mu_g^3 \lambda^2} \right). 
    \end{align*}
    Finally, using that $\| y^*(x^k) - y^*(x^{k+1}) \| \le \frac{3l_{g,1}}{\mu_g} \|x^k - x^{k+1}\|$, we have
    \begin{align*}
        \|v_{k+1}^* - v_k^*\| \lesssim \frac{l_{g,1}}{\lambda \mu_g^2} \underbrace{\left(l_{g,1} + \frac{l_{g,2} l_{f,0}}{\mu_g} \right)}_{l_v} \|x^k - x^{k+1}\| + \frac{l_{g,2} l_{f,0}^2}{\mu_g^3 \lambda^2}. 
    \end{align*}
    Having squares on both sides gives the lemma. 
\end{proof}


\begin{lemma} 
    \label{lemma:v_descent}
Suppose that
\begin{align}
\label{as:v_descent}
    \gamma  \in \left(0,  \frac{\mu_g}{4\tilde{l}_{g,1}^2} \right).
\end{align}
    For all $k$ and $t=1,2, \cdots, T-1$, we have
    \begin{align}
    \label{eq:v_descent}
        \Exs[\|\bar{v}^{k,t} - v_k^*\|^2 | \mF_{k,t} ] &\le (1-\mu_g \gamma/2) \|v^{k,t} - v_{k}^*\|^2 + O(\gamma^2)  \frac{\tilde{l}_{g,1}^2 l_{f,0}^2}{\lambda^2\mu_g^2}.
    \end{align}
    
\end{lemma}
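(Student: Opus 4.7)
The plan is to use the bias--variance decomposition of the squared deviation after the $v$-update and bound each piece separately. Writing $\delta := v^{k,t} - v_k^*$, $\Delta := h_y^{k,t} - h_z^{k,t}$ and $H := \E[\Delta \mid \mathcal{F}_{k,t}]$, the update $\bar v^{k,t} = v^{k,t} - \gamma\Delta$ gives
\begin{align*}
\E\bigl[\|\bar v^{k,t} - v_k^*\|^2 \mid \mathcal{F}_{k,t}\bigr] = \|\delta - \gamma H\|^2 + \gamma^2\,\mathrm{Var}(\Delta \mid \mathcal{F}_{k,t}).
\end{align*}

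For the variance contribution, the independence of $\zeta^{k,t}$ and $\xi^{k,t}$ together with the fact that $h_y$ and $h_z$ share the $\xi$-noise in their $\hat{\nabla}_y g$ components give, via \eqref{eq:mean_squared_lipschitz},
\begin{align*}
\mathrm{Var}(\Delta \mid \mathcal{F}_{k,t}) \le \lambda^{-2}\sigma_f^2 + \tilde l_{g,1}^2\,\|v^{k,t}\|^2.
\end{align*}
The projection $z^{k,t} \in \mathbb{B}(y^{k,t}, r_\lambda)$ from the previous inner-loop step ensures $\|v^{k,t}\| \le r_\lambda = l_{f,0}/(\mu_g\lambda)$, so that $\gamma^2\mathrm{Var}(\Delta \mid \mathcal{F}_{k,t}) = O\bigl(\gamma^2 \tilde l_{g,1}^2 l_{f,0}^2/(\mu_g^2\lambda^2)\bigr)$, matching the stated error.

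For the deterministic contraction, I would use the integral identity $\nabla_y g(x^k, y^{k,t}) - \nabla_y g(x^k, z^{k,t}) = \bar M\,v^{k,t}$ with $\bar M := \int_0^1 \nabla_{yy}^2 g(x^k, z^{k,t} + s v^{k,t})\,ds$ satisfying $\mu_g I \preceq \bar M \preceq l_{g,1} I$, so that $H = \bar M v^{k,t} + \lambda^{-1}\nabla_y f(x^k, y^{k,t})$. The first-order conditions $\nabla_y g(x^k, y^*) = 0$ and $\nabla_y g(x^k, y_\lambda^*) + \lambda^{-1}\nabla_y f(x^k, y_\lambda^*) = 0$ imply $\bar M^* v_k^* = -\lambda^{-1}\nabla_y f(x^k, y_\lambda^*)$, where $\bar M^* := \int_0^1 \nabla_{yy}^2 g(x^k, y^* + s v_k^*)\,ds$. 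Substituting $v^{k,t} = v_k^* + \delta$ then yields $H = \bar M\,\delta + E$, where, by Assumption~\ref{assumption:hessian_lipschitz_g} and the gradient-Lipschitzness of $f$,
\begin{align*}
\|E\| \le l_{g,2}\,\|v_k^*\|\bigl(\|y^{k,t}-y_\lambda^*\| + \|z^{k,t}-y^*\|\bigr) + (l_{f,1}/\lambda)\,\|y^{k,t}-y_\lambda^*\|.
\end{align*}
Combining $\langle\delta, \bar M\delta\rangle \ge \mu_g\|\delta\|^2$, $\|H\|^2 \le 2 l_{g,1}^2\|\delta\|^2 + 2\|E\|^2$, Young's inequality, and the step-size restriction $\gamma < \mu_g/(4\tilde l_{g,1}^2)$, one obtains
\begin{align*}
\|\delta - \gamma H\|^2 \le (1 - \mu_g\gamma/2)\,\|\delta\|^2 + (4\gamma/\mu_g)\,\|E\|^2.
\end{align*}

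The main obstacle is to confirm that $(4\gamma/\mu_g)\|E\|^2$ is actually of the claimed order. The projection onto $\mathbb{B}(\hat y(x^k), 2r/3)$ together with $\|\hat y(x^k) - y^*(x^k)\| \le r/2$ from $y^*$-awareness keep $\|y^{k,t} - y^*\|, \|z^{k,t} - y^*\|$ of order $O(r)$; under the choice $\lambda \ge 6l_{f,0}/(\mu_g r)$ in Theorem~\ref{thm:up2}, $r = \Theta(r_\lambda)$ and hence $\|E\| = O(r_\lambda/\lambda) = O(l_{f,0}/(\mu_g\lambda^2))$. At the admissible step sizes $\gamma \asymp 1/\tilde l_{g,1}^2$ used in Theorem~\ref{thm:up2}, the bias remainder $(4\gamma/\mu_g)\|E\|^2$ and the variance contribution $\gamma^2\tilde l_{g,1}^2 l_{f,0}^2/(\mu_g^2\lambda^2)$ are of the same order, so both combine into the single $O(\gamma^2)\cdot\tilde l_{g,1}^2 l_{f,0}^2/(\lambda^2\mu_g^2)$ error stated in the lemma.
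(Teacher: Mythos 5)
Your proof follows essentially the same route as the paper's: the identity $\Exs[\|\bar v^{k,t} - v_k^*\|^2\mid \mF_{k,t}] = \|\delta - \gamma H\|^2 + \gamma^2\mathrm{Var}(\Delta)$ is the bias--variance form of the paper's cross-term expansion, your variance bound $\mathrm{Var}(\Delta)\lesssim \lambda^{-2}\sigma_f^2 + \tilde l_{g,1}^2\|v^{k,t}\|^2$ together with the projection guarantee $\|v^{k,t}\|\le r_\lambda$ is exactly what the paper uses, and your contraction via strong convexity ($\langle\delta,\bar M\delta\rangle \ge \mu_g\|\delta\|^2$ plus a step-size restriction) is what the paper invokes under the label ``co-coercivity.'' The substantive difference is that you explicitly decompose $H = \bar M\delta + E$ and spell out the linearization error $E$, whereas the paper jumps directly from the cross term to $(1-\mu_g\gamma)\|v^{k,t}-v_k^*\|^2$ with no such term appearing. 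For quadratic $g$ and linear $f$ one indeed has $H = \bar M(v^{k,t}-v_k^*)$ and $E=0$, so the paper's claim is sharp in that case; but for general smooth $g,f$ your $E$ term is genuinely present, and the paper's proof simply does not account for it.

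The concrete gap you have identified is therefore real, but your attempted repair does not work: the claim ``$r = \Theta(r_\lambda)$'' under $\lambda\ge 6l_{f,0}/(\mu_g r)$ only says $r\ge 6r_\lambda$, not $r\lesssim r_\lambda$, and the paper explicitly allows $r=\infty$ (standard oracles), in which case the projection onto $\mathbb{B}(\hat y(x^k),2r/3)$ gives no bound on $\|y^{k,t}-y_\lambda^*(x^k)\|$ at all. (Also, Theorem~\ref{thm:up2} uses $\gamma\asymp\epsilon^2$, not $\gamma\asymp 1/\tilde l_{g,1}^2$.) As the lemma is stated, the right-hand side should carry an extra term of the form $O(\gamma/\mu_g)\,\|E\|^2$ with $\|E\|\lesssim l_{g,2}\|v_k^*\|\bigl(\|y^{k,t}-y_\lambda^*(x^k)\|+\|z^{k,t}-y^*(x^k)\|\bigr) + (l_{f,1}/\lambda)\|y^{k,t}-y_\lambda^*(x^k)\|$; this is only later controllable through the descent estimate for $\I_k$ in Proposition~\ref{lem:i1}, which is downstream of this lemma in the paper's development. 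So your decomposition is correct and more transparent than the paper's, and the loose end you flagged is a genuine gap in the standalone statement rather than a defect in your argument.
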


\begin{proof}
    In this proof, we only consider the projection step $$z^{k,t+1} \leftarrow \Proj{\mathbb{B}(y^{k, t+1}, r_\lambda)}{\bar{z}^{k,t} + \Delta_y}.$$
    Other projections can be handled by a small modification of Lemma~\ref{lem:SGD_sc_complexity}. 
    By direct computation, we have
    \begin{align*}
        &\Exs[\|\bar{v}^{k,t} - v^*_k\|^2 | \mF_{k,t}] \\
        &= \Exs[\|v^*_k - \bar{v}^{k,t}\|^2 + \|v_{k+1} - \bar{v}^{k,t}\|^2 - 2\vdot{v_k^* - v^{k,t}}{\bar{v}^{k,t} - v^{k,t}} | \mF_{k,t}] \\
        &\le \|v^*_k - v^{k,t}\|^2 + \gamma^2 \cdot \Exs[\|\lambda^{-1}\grad_y f(x^k, y^{k}; \zeta_{k,t}^{y}) + \grad_y g(x^k, y^{k,t}; \xi_{k,t}^{y}) - \grad_y g(x^k, z^{k,t}; \xi_{k,t}^y)\|^2 | \mF_{k,t} ]\\
        &\qquad - 2\gamma \vdot{v_k^* - v^{k,t}}{\lambda^{-1}\grad_y f(x^k, y^{k,t}) + \grad_y g(x^k, y^{k,t}) - \grad_y g(x^k, z^{k,t}) } \\
        &\le (1 - \mu_g \gamma) \|v_k^* - v^{k}\|^2 + \frac{\gamma^2}{\lambda^2} (\sigma_f^2 + l_{f,0}^2) + \gamma^2 \tilde{l}_{g,1}^2 \|y^{k,t} - z^{k,t}\|^2. 
    \end{align*}
    where in the last line, we used co-covercivity of strongly-convex functions $\vdot{x-y}{\grad g(x) - \grad g(y)} \ge \mu_g \|x-y\|^2$. Finally, note that
    \begin{align*}
        \|y^{k,t} - z^{k,t}\|^2 \le 2(\|v^{k,t} - v_{k}^*\|^2 + \|v_{k}^*\|^2 ) \leq 2\|v^{k,t} - v_{k}^*\|^2 + 2r_{\lambda}^2.
    \end{align*}
    Using \eqref{as:v_descent}, we conclude \eqref{eq:v_descent}.
\end{proof}


Lastly, we have the desired estimates for $\|v^{k+1} - v^* (x^k)\|$ and $\W_k$.

\begin{proposition} 
\label{prop:ww}
For given $\beta>2$, assume that \eqref{as:v_descent} and 
\begin{align}
\label{eq:0ww}
        \left(1 - \frac{\mu_g \gamma}{2}\right)^T < \frac{1}{2 \beta}.
    \end{align}
Then,
    \begin{align}
    \label{eq:1ww}
      \E[ \|v^{k+1} - v^* (x^k)\|^2 \,|\, \mathcal{F}_{k}]  \le \frac{1}{2\beta} \E[ \W_{k} ]+ O(\gamma) \frac{\tilde{l}_{g,1}^2 l_{f,0}^2}{\lambda^2\mu_g^3}. 
    \end{align}
    Furthermore, 
    \begin{align}
    \label{eq:2ww}
    \frac{\lambda^2}{n} \sum_{k=0}^n \E[ \W_{k} ] &\leq \frac{1}{n}2 \W_0 \lambda^2 + \frac{l_{g,1}^2 l_{v}^2 \alpha^2}{\mu_g^4  } \frac{1}{n}\sum_{k=0}^{n-1}\|\hat{G}_k\|^2 + 
     C \frac{\tilde{l}_{g,1}^2 l_{f,0}^2}{\mu_g^3} \gamma + \frac{l_{g,2}^2 l_{f,0}^4}{\mu_g^6} \frac{1}{\lambda^2}.
    \end{align}
\end{proposition}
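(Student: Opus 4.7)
The plan is to prove \eqref{eq:1ww} by iterating the one-step contraction of Lemma~\ref{lemma:v_descent} across the inner loop, using the non-expansion in Proposition~\ref{lemma:projection_smaller_v} to transfer the bound from $\bar{v}^{k,t}$ to $v^{k,t+1}$. Composing those two facts yields
\begin{align*}
\Exs\!\left[\|v^{k,t+1}-v_{k}^{*}\|^{2}\,\big|\,\mathcal{F}_{k,t}\right] \le \left(1-\tfrac{\mu_{g}\gamma}{2}\right)\|v^{k,t}-v_{k}^{*}\|^{2} + O(\gamma^{2})\,\tfrac{\tilde{l}_{g,1}^{2}l_{f,0}^{2}}{\lambda^{2}\mu_{g}^{2}}.
\end{align*}
Unrolling from $t=0$ to $t=T-1$ and summing the resulting geometric tail gives a bound of the form $(1-\mu_{g}\gamma/2)^{T}\|v^{k,0}-v_{k}^{*}\|^{2}+O(\gamma)\tilde{l}_{g,1}^{2}l_{f,0}^{2}/(\lambda^{2}\mu_{g}^{3})$, where the $1/\mu_{g}$ factor comes from the geometric sum $\sum_{t\ge 0}(1-\mu_{g}\gamma/2)^{t}\le 2/(\mu_{g}\gamma)$. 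Invoking \eqref{eq:0ww} to bound the first term by $\tfrac{1}{2\beta}\|v^{k,0}-v_{k}^{*}\|^{2}$, together with $\|v^{k,0}-v_{k}^{*}\|^{2}\le \mathcal{W}_{k}$ (this is exactly the content of Proposition~\ref{lemma:projection_smaller_v} applied to the initial $z$-projection, since $v_{k}^{*}$ lies in the $r_{\lambda}$-ball by Lemma~\ref{lem:y_bd}), yields \eqref{eq:1ww}.

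For \eqref{eq:2ww}, the plan is to turn \eqref{eq:1ww} into a contractive recursion for $\mathcal{W}_{k+1}=\|v^{k+1}-v_{k+1}^{*}\|^{2}$ by splitting $v_{k+1}^{*}-v^{k+1}=(v_{k}^{*}-v^{k+1})+(v_{k+1}^{*}-v_{k}^{*})$ through Young's inequality and controlling the shift term by Lemma~\ref{lemma:v_star_contraction} together with $x^{k+1}-x^{k}=-\alpha\hat{G}_{k}$. This produces
\begin{align*}
\Exs[\mathcal{W}_{k+1}\mid \mathcal{F}_{k}] \le \tfrac{1}{\beta}\mathcal{W}_{k} + O(\gamma)\tfrac{\tilde{l}_{g,1}^{2}l_{f,0}^{2}}{\lambda^{2}\mu_{g}^{3}} + O\!\left(\tfrac{l_{g,1}^{2}l_{v}^{2}\alpha^{2}}{\mu_{g}^{4}\lambda^{2}}\right)\|\hat{G}_{k}\|^{2} + O\!\left(\tfrac{l_{g,2}^{2}l_{f,0}^{4}}{\mu_{g}^{6}\lambda^{4}}\right).
\end{align*}
Iterating this recursion in $k$ gives $\E[\mathcal{W}_{k}]\le \beta^{-k}\mathcal{W}_{0}+\sum_{i=0}^{k-1}\beta^{-(k-1-i)}(\cdots)$, exactly mirroring the argument used for $\E[\mathcal{I}_{k}]$ in Proposition~\ref{lem:i1}. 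Summing over $k=0,\dots,n-1$, using $\sum_{j\ge 0}\beta^{-j}\le 2$ since $\beta>2$, and multiplying through by $\lambda^{2}/n$ collects the four summands on the right-hand side of \eqref{eq:2ww}: the transient $2\lambda^{2}\mathcal{W}_{0}/n$, the $\gamma$-noise term, the $\lambda^{-2}$ bias contribution from Lemma~\ref{lemma:v_star_contraction}, and the aggregated $\|\hat{G}_{k}\|^{2}$ term.

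The main obstacle I anticipate is bookkeeping two overlapping projection steps consistently. The subtle point is that $v^{k,0}=y^{k,0}-z^{k,0}$ is defined via the initial projection $y^{k,0}\leftarrow \Pi_{\mathbb{B}(\hat{y}(x^{k}),2r/3)}\{y^{k}\}$ followed by the coupled projection $z^{k,0}\leftarrow \Pi_{\mathbb{B}(y^{k,0},r_{\lambda})}\{z^{k}+\Delta_{y}\}$, so that $y^{k,0}-(z^{k}+\Delta_{y})=v^{k}$; therefore Lemma~\ref{lemma:projection_shortened_distance} applied with $t=r_{\lambda}$, $v=v^{k}$, $u=v_{k}^{*}$ (whose norm is at most $r_{\lambda}$ by Lemma~\ref{lem:y_bd}) gives $\|v^{k,0}-v_{k}^{*}\|\le\|v^{k}-v_{k}^{*}\|$ needed at the start of the inner loop. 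Once this initialization estimate and the monotonicity under the per-iterate projections are established, the rest of the proof reduces to the same type of discrete Gronwall bookkeeping already executed for $\mathcal{J}_{k}$, and no new estimates are needed.
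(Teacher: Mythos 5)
Your proposal is correct and follows essentially the same route as the paper: compose the one-step contraction of Lemma~\ref{lemma:v_descent} with the projection non-expansion of Proposition~\ref{lemma:projection_smaller_v}, unroll the inner loop with the geometric sum $\sum_t (1-\mu_g\gamma/2)^t \le 2/(\mu_g\gamma)$ and \eqref{eq:0ww} to get \eqref{eq:1ww}, then combine Young's inequality with Lemma~\ref{lemma:v_star_contraction} and iterate the resulting $\beta^{-1}$-contraction over $k$ exactly as in Proposition~\ref{lem:i1} to reach \eqref{eq:2ww}. Your explicit treatment of the initialization step, showing $\|v^{k,0}-v_k^*\|\le\|v^k-v_k^*\|$ via the identity $y^{k,0}-(z^k+\Delta_y)=v^k$ together with Lemma~\ref{lemma:projection_shortened_distance} and Lemma~\ref{lem:y_bd}, is a detail the paper leaves implicit, and it cleanly closes the gap between the unrolled bound in terms of $\|v^{k,0}-v_k^*\|^2$ and the $\mathcal{W}_k$ that appears in \eqref{eq:1ww}.
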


\begin{proof}
    Proposition~\ref{lemma:projection_smaller_v} and Lemma~\ref{lemma:v_descent} yield
    \begin{align}
        \Exs[\| v^{k,t+1} - v^* (x^k)\|^2 | \mathcal{F}_{k,t} ] &\le (1-\mu_g \gamma/2) \|v^{k,t} - v^* (x^k)\|^2 + O(\gamma^2) \cdot \frac{\tilde{l}_{g,1}^2 l_{f,0}^2}{\lambda^2\mu_g^2}.
    \end{align}
    for all $t$ and $k$. By iterating this for $t = 0, 1, \cdots, T-1$, we have
        \begin{align}
        \E[ \|v^{k+1} - v^* (x^k)\| ^{2} | \mathcal{F}_{k} ] = \Exs[\|v^{k,T} - v^* (x^k)\|^2 | \mathcal{F}_{k} ] &\le (1-\mu_g \gamma/2)^T \|v^{k,t} - v_{k}^*\|^2 + O(\gamma) \frac{\tilde{l}_{g,1}^2 l_{f,0}^2}{\lambda^2\mu_g^3}.
    \end{align}
    Using the assumption \eqref{eq:0ww}, the above yields the first inequality \eqref{eq:1ww}.

    On the other hand, using Young's inequality,
    $$\W_{k+1} = \|v^{k+1} - v^*(x^{k+1})\|^2 \le  2 \|v^{k+1} - v^*(x^{k})\|^2 + 2 \|v^*(x^{k+1}) - v^*(x^{k})\|^2.$$ 
    Applying \eqref{eq:1ww} and Lemma~\ref{lemma:v_star_contraction}, we obtain that
    \begin{align}
        \E[ \W_{k+1} | \mathcal{F}_{k} ] &\leq 2 \E[ \|v^{k+1} - v^*(x^{k}) \| ^{2} | \mathcal{F}_{k} ] + \frac{l_{g,1}^2 l_{v}^2 \alpha^2}{\mu_g^4 \lambda^2} \|\hat{G}_k\|^2 + \frac{l_{g,2}^2 l_{f,0}^4}{\mu_g^6 \lambda^4},\\
        &\leq 
        \frac{1}{\beta}  \W_{k} + \frac{l_{g,1}^2 l_{v}^2 \alpha^2}{\mu_g^4} \|\hat{G}_k\|^2\frac{1}{\lambda^2} + C \frac{\tilde{l}_{g,1}^2 l_{f,0}^2}{\lambda^2\mu_g^3} \frac{\gamma}{\lambda^2} + \frac{l_{g,2}^2 l_{f,0}^4}{\mu_g^6} \frac{1}{\lambda^4},
    \end{align}
    for some constant $C>0$. The assumption \eqref{eq:0ww} was used in the last inequality.
    
    Using the parallel argument as in Proposition~\ref{lem:i1}, we conclude \eqref{eq:2ww}.
\end{proof}


\subsubsection{Proofs of Theorem~\ref{thm:up2} and Proposition~\ref{prop:final_upper_bound}}

The following statement is the detailed version of Proposition~\ref{prop:final_upper_bound}.


\begin{proposition}
\label{prop:e4}
Suppose that Assumptions \ref{assumption:nice_functions}-\ref{assumption:hessian_lipschitz_g}, \eqref{eq:unbiased_gradients}, \eqref{eq:bounded_variance_gradients}, and \eqref{eq:mean_squared_lipschitz}. In addition, let the algorithm parameters satisfy \eqref{as:v_descent}, \eqref{eq:0ww},  
    \begin{align}
    r_{\lambda} = \frac{l_{f,0}}{\mu_g \lambda}, \quad \alpha \ll 1,  \quad 
    r \in [ 6r_{\lambda}, \infty]
    \end{align}
and
\begin{align}
\label{eq:1e4}
    -\frac{1}{4} +  C_1 \alpha^2 < 0 \hbox{ where } C_1= l_y^2 \frac{64 l_{g,1}^{2}}{\mu_g^2} +  \frac{l_{g,1}^2 l_{v}^2 }{\mu_g^4}
\end{align}
hold true. 
Then, 
\begin{align}
    &\frac{\alpha}{2} \sum_{k=0}^{n-1} \E[ \lVert \nabla \mathcal{L}_{\lambda}^{*}(x^{k}) \rVert^{2} -   \mathcal{L}_{\lambda}^{*}(x^{0}) + \inf_{x\in \R^{d_{x}}} \E[  \mathcal{L}_{\lambda}^{*}(x) ] \leq 
    \alpha l_y^2 \I_0  +  \alpha l_{g,1}^2 \W_0 \lambda^2 + 
        \alpha \sum_{k=0}^{n-1} \Var(\hat{G}_k) + n O(\lambda^{-2}) + n O(\gamma).
\end{align}
    
\end{proposition}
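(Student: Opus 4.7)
The plan is to chain together the descent lemma (Proposition~\ref{prop:upl}), the bias control (Lemma~\ref{lemma:gradL_bias_control}(ii)), and the two inner-loop contraction bounds (Propositions~\ref{lem:i1} and~\ref{prop:ww}), and then absorb the spurious $\|\hat{G}_k\|^{2}$ terms into the negative quadratic term supplied by the descent lemma. Concretely, I would first take full expectation in Proposition~\ref{prop:upl} and sum over $k=0,\dots,n-1$ to obtain
\begin{align*}
\frac{\alpha}{2}\sum_{k=0}^{n-1}\E\big[\lVert \nabla \mathcal{L}_{\lambda}^{*}(x^{k})\rVert^{2}\big]
\le \mathcal{L}_{\lambda}^{*}(x^{0}) - \inf_{x}\mathcal{L}_{\lambda}^{*}(x)
- \frac{\alpha}{4}\sum_{k=0}^{n-1}\E\big[\lVert\hat{G}_{k}\rVert^{2}\big]
+ \alpha\sum_{k=0}^{n-1}\Var(\hat{G}_{k})
+ \alpha\sum_{k=0}^{n-1}\E\big[\lVert \nabla \mathcal{L}_{\lambda}^{*}(x^{k}) - G_{k}\rVert^{2}\big].
\end{align*}
The remaining job is to control the last (bias) sum.

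Next, I would apply Lemma~\ref{lemma:gradL_bias_control}(ii) together with $(a+b+c)^{2}\le 3(a^{2}+b^{2}+c^{2})$, giving
\begin{align*}
\E\big[\lVert \nabla \mathcal{L}_{\lambda}^{*}(x^{k}) - G_{k}\rVert^{2}\big]
\lesssim l_{y}^{2}\,\E\big[\lVert y^{k+1}-y_{\lambda}^{*}(x^{k})\rVert^{2}\big]
+ \lambda^{2}l_{g,1}^{2}\,\E\big[\lVert v^{k+1}-v^{*}(x^{k})\rVert^{2}\big]
+ \frac{l_{f,0}^{2}l_{y}^{2}}{\mu_{g}^{2}\lambda^{2}}.
\end{align*}
For the two per-iteration quantities I would invoke \eqref{eq:1ii} and \eqref{eq:1ww}, which bound them by $\tfrac{1}{2\beta}\I_{k}+O(\gamma/\lambda^{2})$ and $\tfrac{1}{2\beta}\W_{k}+O(\gamma/\lambda^{2})$, respectively. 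Summing and then calling on \eqref{eq:2ii} and \eqref{eq:2ww} converts $\sum_{k}\E[\I_{k}]$ and $\lambda^{2}\sum_{k}\E[\W_{k}]$ into initial terms ($l_{y}^{2}\I_{0}$ and $l_{g,1}^{2}\lambda^{2}\W_{0}$), an $O(\gamma n)$ contribution from the SGD variance of the inner loop, an $O(n\lambda^{-2})$ contribution from the $v^{*}$-drift term in Lemma~\ref{lemma:v_star_contraction}, and crucially a term of the form $C_{1}\alpha^{2}\sum_{k}\E[\lVert\hat{G}_{k}\rVert^{2}]$ with $C_{1}=l_{y}^{2}\cdot 64l_{g,1}^{2}/\mu_{g}^{2}+l_{g,1}^{2}l_{v}^{2}/\mu_{g}^{4}$.

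Finally, I would combine this with the $-\tfrac{\alpha}{4}\sum_{k}\E[\lVert\hat{G}_{k}\rVert^{2}]$ from the descent inequality. After multiplying the bias sum by $\alpha$, the unwanted term becomes $C_{1}\alpha^{3}\sum_{k}\E[\lVert\hat{G}_{k}\rVert^{2}]$, and the coefficient in front of $\E[\lVert\hat{G}_{k}\rVert^{2}]$ reduces to $\alpha(-\tfrac{1}{4}+C_{1}\alpha^{2})$, which is non-positive by hypothesis \eqref{eq:1e4} and can therefore be dropped. Collecting the remaining terms yields exactly the bound claimed in Proposition~\ref{prop:e4}.

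The main obstacle is bookkeeping rather than a deep idea: because $\I_{k}$ and $\W_{k}$ each satisfy a Lipschitz-in-$x$ type recursion whose increment is proportional to $\alpha^{2}\lVert\hat{G}_{k}\rVert^{2}$, the bias in the hypergradient feeds back into the descent step, and the argument only closes if the contraction rate $1/\beta$ (driven by the $(1-\mu_{g}\gamma/2)^{T}$ inner-loop rate via \eqref{eq:0ii} and \eqref{eq:0ww}) is strong enough to make the geometric sums converge cleanly, and if the smallness condition \eqref{eq:1e4} on $\alpha$ is tight enough to absorb the feedback into the descent's negative quadratic. Verifying that both the $y$-error and the $v$-error recursions can be closed simultaneously with the same choice of $\beta$ and $\gamma$, while keeping the residuals at the advertised $O(\lambda^{-2})$ and $O(\gamma)$ levels, is the delicate part of the argument.
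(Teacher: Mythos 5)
Your proposal is correct and follows essentially the same route as the paper's proof: take the descent inequality from Proposition~\ref{prop:upl}, square the bias bound of Lemma~\ref{lemma:gradL_bias_control}(ii), substitute the per-iteration contractions \eqref{eq:1ii}, \eqref{eq:1ww}, sum and telescope via \eqref{eq:2ii}, \eqref{eq:2ww}, and absorb the resulting $C_1\alpha^3\sum_k\E[\|\hat{G}_k\|^2]$ feedback into the negative $-\tfrac{\alpha}{4}\sum_k\E[\|\hat{G}_k\|^2]$ term using \eqref{eq:1e4}. The only discrepancy is cosmetic: you apply $(a+b+c)^2\le 3(a^2+b^2+c^2)$ where the paper writes a coefficient of $2$, a harmless constant-factor slip in the paper that does not affect the stated asymptotics.
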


\begin{proof}
Let $\mathcal{B}_k := \Exs[\mathcal{L}_{\lambda}^{*}(x^{k+1}) | \mF_k] - \mathcal{L}_{\lambda}^{*}(x^{k}) + \frac{\alpha}{2} \lVert \nabla \mathcal{L}_{\lambda}^{*}(x^{k}) \rVert^{2}.$ Recall from Proposition~\ref{prop:upl} that
    \begin{align*}
 \mathcal{B}_k &\leq  - \frac{\alpha}{4} \|\hat{G}_k\|^2 + \alpha  \Var(\hat{G}_k)
 + 2 \alpha \left(\frac{l_{f,0} l_{y}}{\mu_g }\right)^2 \frac{1}{\lambda^2} +  2 \alpha l_{y}^2 \|y^{k+1} - y_{\lambda}^* (x^k)\|^2 + 2 \alpha \lambda^2 l_{g,1}^2 \|v^{k+1} - v^*(x^k)\|^2.
\end{align*}
The upper bounds of the last two terms, $\|y^{k+1} - y_{\lambda}^* (x^k)\|^2$ and $\|v^{k+1} - v^*(x^k)\|^2$, are given in  Proposition~\ref{lem:i1}, and Proposition~\ref{prop:ww}, respectively. Using them, we obtain that
\begin{align*}
    \mathcal{B}_k &\leq - \frac{\alpha}{4} \|\hat{G}_k\|^2 
    + \alpha  \Var(\hat{G}_k)
    + 2 \alpha \left(\frac{l_{f,0} l_{y}}{\mu_g }\right)^2 \frac{1}{\lambda^2}\\ 
    &+ \alpha l_y^2\E[ \I_{k} ] \frac{1}{\beta} + \frac{8 \alpha l_y \gamma}{\mu_g} \Var(\hat{\nabla}\mathcal{L}_\lambda)\frac{1}{\lambda^2}\\ &+  \alpha l_{g,1}^2 \E[ 
 \lambda^2 \W_{k} ] \frac{1}{\beta}  + 2 \alpha l_{g,1}^2  \frac{\tilde{l}_{g,1}^2 l_{f,0}^2}{\mu_g^3} O(\gamma) + 2 \alpha l_{g,1}^2 \frac{l_{g,2}^2 l_{f,0}^4}{\mu_g^6} \frac{1}{\lambda^2}
\end{align*}



    Choosing $\beta = 2$, taking the full expectation and telescoping over $k=0,\dots,n-1$,
    \begin{align}
        \sum_{k=0}^{n-1}  \mathcal{B}_k 
        &\leq \alpha l_y^2 \I_0  +  \alpha l_{g,1}^2 \W_0 \lambda^2 + \alpha \left( -\frac{1}{4} +  C_1 \alpha^2   \right) \sum_{k=0}^{n-1} \|\hat{G}_k\|^2+ 
        \alpha \sum_{k=0}^{n-1} \Var(\hat{G}_k)\\ 
        &
 + \left( 2 \alpha \left(\frac{l_{f,0} l_{y}}{\mu_g }\right)^2 \frac{1}{\lambda^2} + \frac{8}{\mu_g}  \frac{\Var(\hat{\nabla}L_\lambda) \gamma}{\lambda^2} + 2 \alpha l_{g,1}^2 \frac{\tilde{l}_{g,1}^2 l_{f,0}^2}{\mu_g^3}\gamma + 2 \alpha l_{g,1}^2 \frac{l_{g,2}^2 l_{f,0}^4}{\mu_g^6} \frac{1}{\lambda^2}
          \right) n
    \end{align}
for $C_1$ given in \eqref{eq:1e4}.
    

    On the other hand,
    \begin{align}
        &\frac{\alpha}{2} \sum_{k=0}^{n-1} \E[ \lVert \nabla \mathcal{L}_{\lambda}^{*}(x^{k}) \rVert^{2} -   \mathcal{L}_{\lambda}^{*}(x^{0}) + \inf_{x\in \R^{d_{x}}} \E[  \mathcal{L}_{\lambda}^{*}(x) ] 
        \leq \sum_{k=0}^{n-1}  \mathcal{B}_k.
    \end{align} 
    Thanks to \eqref{eq:1e4}, and the fact that $\Var(\hat{\nabla}L_\lambda) = O(\lambda^2)$, we conclude.
\end{proof}

\paragraph{Proof of Theorem~\ref{prop:final_upper_bound}:}
    Since $\lambda= O(\epsilon^{-1})$, by Lemma \ref{lem:hyper_error}, $\lVert \nabla \mathcal{L}_{\lambda}^{*}(x_{k}) \rVert = O(\epsilon)$ implies $\lVert \nabla F(x_{k}) \rVert = O(\epsilon)$. Hence it is enough to show that Algorithm \ref{algo:penalty_coupling_yz} finds an $\epsilon$-stationary point of the surrogate objective $\mathcal{L}_{\lambda}^{*}$ within $O(\epsilon^{-4})$ iterations, respectively.

After the projection steps before entering  the inner loop, we have $\|y^{k,0} - z^{k,0}\| \leq r_\lambda$ for all $k\ge0$. From Proposition~\ref{prop:e4}, we have
\begin{align}
    \frac{\alpha}{2} \frac{1}{n} \sum_{k=0}^{n-1} \E[ \lVert \nabla \mathcal{L}_{\lambda}^{*}(x^{k}) \rVert^{2} \leq 
    \frac{C}{n} + 
          \frac{\alpha}{n} \sum_{k=0}^{n-1} \Var(\hat{G}_k) +  O(\lambda^{-2}) +  O(\gamma).
\end{align}
where $C = 
\mathcal{L}_{\lambda}^{*}(x^{0}) - \inf_{x\in \R^{d_{x}}} \E[  \mathcal{L}_{\lambda}^{*}(x) ] + \alpha l_y^2 \I_0  +  \alpha l_{g,1}^2 \W_0 \lambda^2$.


For $M = O(\epsilon^{-2})$, Lemma~\ref{lemma:Lgrad_variance} yields that $\Var(\hat{G}_k) = O(\epsilon^2)$. Choosing $\gamma = O(\epsilon^{2})$ and $n = O(\epsilon^{-2})$, 
\begin{align*}
    \frac{\alpha}{2} \frac{1}{n} \sum_{k=0}^{n-1} \E[ \lVert \nabla \mathcal{L}_{\lambda}^{*}(x^{k}) \rVert^{2} \leq 
    O(\epsilon^2).
\end{align*}
For $T = O(\epsilon^{-2})$, the step size rule is satisfied \eqref{eq:0ii}. Thus, we conclude that the total complexity is $O(n\cdot (M+T)) = O(\epsilon^{-4})$.





    





\section{Proofs for Lower Bounds} 

In this section, we establish Theorem \ref{theorem:final_lower_bound}. 

\subsection{Auxiliary Lemmas for Lower Bounds}
\begin{lemma}[\cite{carmon2020lower}, Lemma 1]
    \label{lemma:scaler_function_auxiliary_carmon}
    Suppose $\Psi(t), \Phi(t)$ is defined as in \eqref{eq:def_chain_scaler_functions}:
    \begin{align*}
        \Psi(t) = \begin{cases}
            0, & t \le 1/2, \\
            \exp\left(1 - \frac{1}{(2t-1)^2}\right), & t > 1/2,
        \end{cases} \qquad     \Phi(t) = \sqrt{e} \int_{-\infty}^{t} e^{-\frac{1}{2} \tau^2} d\tau.
    \end{align*}
    They satisfy the following properties:
    \begin{enumerate}
        \item Both $\Psi$ and $\Phi$ are infinitely differentiable.
        \item For all $t \in \mathbb{R}$,
        \begin{align*}
            &0 \le \Psi(t) \le e, \ 0 \le \Psi'(t) \le \sqrt{54/e}, \ |\Psi''(t)| \le 32.5 \\
            &0 \le \Phi (t) \sqrt{2\pi e}, \ 0 \le \Phi'(t) \le \sqrt{e}, \ |\Phi''(t)| \le 1. 
        \end{align*}
        \item For all $t \ge 1$ and $|u| \le 1$, $\Psi(t) \Phi'(u) \ge 1$. 
    \end{enumerate}
\end{lemma}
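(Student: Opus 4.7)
The plan is to prove each of the three enumerated properties separately, handling $\Phi$ first because it is straightforward and $\Psi$ second because the smoothness at the join $t=1/2$ is the only delicate point.

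First I would handle $\Phi(t)=\sqrt{e}\int_{-\infty}^{t}e^{-\tau^{2}/2}d\tau$. Since the integrand is real-analytic, $\Phi$ is $C^{\infty}$, and directly $\Phi'(t)=\sqrt{e}\,e^{-t^{2}/2}$ and $\Phi''(t)=-\sqrt{e}\,t\,e^{-t^{2}/2}$. The bounds $0\le \Phi(t)\le \sqrt{e}\int_{-\infty}^{\infty}e^{-\tau^{2}/2}d\tau=\sqrt{2\pi e}$ and $0\le \Phi'(t)\le \sqrt{e}$ follow immediately by evaluating at $t=0$. For $|\Phi''|$, setting its derivative $\sqrt{e}(t^{2}-1)e^{-t^{2}/2}$ to zero gives critical points $t=\pm 1$, where $|\Phi''(\pm 1)|=\sqrt{e}\,e^{-1/2}=1$.

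Next I would treat $\Psi$. On $(1/2,\infty)$, $\Psi$ is a composition of smooth functions, so $C^{\infty}$ there; on $(-\infty,1/2]$ it is identically zero. The key point is $C^{\infty}$-smoothness at $t=1/2$, which I will obtain from the classical fact that $u\mapsto e^{-1/u^{2}}$ and all its derivatives tend to $0$ as $u\to 0^{+}$ (each derivative is a polynomial in $1/u$ times $e^{-1/u^{2}}$, and the exponential kills any polynomial). For the pointwise bounds, $0\le \Psi(t)\le e$ is immediate since $1/(2t-1)^{2}\ge 0$. For $\Psi'$ I would write $\Psi'(t)=\Psi(t)\cdot\frac{4}{(2t-1)^{3}}$; substituting $u=2t-1$ reduces the problem to maximizing $\frac{4}{u^{3}}e^{1-1/u^{2}}$ over $u>0$, whose log-derivative $-3/u+2/u^{3}$ vanishes at $u=\sqrt{2/3}$, giving the maximum $\sqrt{54/e}$. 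The bound $|\Psi''(t)|\le 32.5$ will follow by the same substitution: differentiating once more gives $\Psi''(t)=\Psi(t)\big(\tfrac{16}{(2t-1)^{6}}-\tfrac{24}{(2t-1)^{4}}\big)$, and a routine single-variable optimization over $u>0$ yields the claimed numerical bound (this is the one step requiring a careful but still elementary calculation).

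Finally, the product inequality $\Psi(t)\Phi'(u)\ge 1$ for $t\ge 1$ and $|u|\le 1$ will drop out: from $\Psi'(t)=\Psi(t)\cdot 4/(2t-1)^{3}>0$ on $(1/2,\infty)$, $\Psi$ is strictly increasing there, so $\Psi(t)\ge \Psi(1)=e^{1-1}=1$; and $\Phi'(u)=\sqrt{e}\,e^{-u^{2}/2}$ is minimized on $[-1,1]$ at $|u|=1$, giving $\Phi'(u)\ge \sqrt{e}\cdot e^{-1/2}=1$. Multiplying these two lower bounds concludes the third property.

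The only conceptual obstacle is the smoothness of $\Psi$ at $t=1/2$, which I would handle by induction on the order of derivative: every derivative on $(1/2,\infty)$ has the form $R(t)\,\Psi(t)$ for a rational function $R$ with poles only at $t=1/2$, and since $\Psi(t)$ decays faster than any polynomial in $1/(2t-1)$ as $t\downarrow 1/2$, each one-sided limit at $1/2$ equals $0$, matching the identically zero derivatives from the left. All other steps are explicit one-variable calculus.
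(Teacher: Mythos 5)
The paper cites this lemma directly from Carmon et al.\ (2020) and does not reproduce a proof, so there is no in-paper argument to compare against; your verification is correct and follows the same elementary calculus route as the original reference. All the calculations check out: $\Psi'(t)=\Psi(t)\cdot 4/(2t-1)^{3}$ maximized at $u=\sqrt{2/3}$ gives exactly $4(3/2)^{3/2}e^{-1/2}=\sqrt{54/e}$; $\Psi''(t)=\Psi(t)\bigl(16/(2t-1)^{6}-24/(2t-1)^{4}\bigr)$ is the right formula; $\Phi''(\pm 1)=\mp\sqrt{e}\,e^{-1/2}=\mp 1$; $\Psi(1)=1$ with $\Psi$ increasing on $(1/2,\infty)$ and $\Phi'(u)\ge\sqrt{e}\,e^{-1/2}=1$ on $[-1,1]$ give the product bound; and the flat-bump smoothness argument at $t=1/2$ is the standard one. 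The only step you explicitly defer is the numerical optimization showing $|\Psi''|\le 32.5$, which is indeed a routine single-variable maximization.
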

\begin{lemma}[\cite{carmon2020lower}, Lemma 1, 2]
\label{lemma:F_large_coordinates}
    If there exists $0 \le i < d_x$ such that $|x_{i}| \ge \epsilon_x$ and $|x_{i+1}| < \epsilon_x$, then $|\grad_i F(x)| > \epsilon$. Furthermore, for all $x \in \mathbb{R}^{d_x}$, $\|\grad F(x)\|_{\infty} \le 23\epsilon$. 
\end{lemma}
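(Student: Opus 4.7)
The plan is to rescale to the un-normalized Carmon--Duchi--Hinder--Sidford chain and adapt the arguments of their Lemmas 1 and 2 (of which this lemma is essentially a restatement). Introducing $\tilde x := x/\epsilon$ and the un-normalized objective $\tilde F(\tilde x) := \sum_{i=1}^{d_x} \tilde f_i(\tilde x)$ with $\tilde f_i(\tilde x) := \Psi(\tilde x_{i-1}) \Phi(\tilde x_i) - \Psi(-\tilde x_{i-1}) \Phi(-\tilde x_i)$, one has $F(x) = \epsilon^2 \tilde F(\tilde x)$ and hence $\nabla_i F(x) = \epsilon \cdot \nabla_i \tilde F(\tilde x)$. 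Both parts of the lemma thus reduce to statements about $\tilde F$: (i) $|\nabla_i \tilde F(\tilde x)| > 1$ at the appropriate ``boundary'' index, and (ii) $\|\nabla \tilde F(\tilde x)\|_\infty \le 23$ uniformly in $\tilde x$.

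For the upper bound, I would compute directly
\begin{align*}
\nabla_i \tilde F(\tilde x) = \Psi(\tilde x_{i-1}) \Phi'(\tilde x_i) + \Psi(-\tilde x_{i-1}) \Phi'(-\tilde x_i) + \Psi'(\tilde x_i) \Phi(\tilde x_{i+1}) + \Psi'(-\tilde x_i) \Phi(-\tilde x_{i+1}),
\end{align*}
a sum of four non-negative terms (all building blocks are non-negative by Lemma \ref{lemma:scaler_function_auxiliary_carmon}(2)). The crucial observation, which is what keeps the constant $23$ honest, is that since $\Psi$ and $\Psi'$ vanish on $(-\infty, 1/2]$, at most one of $\Psi(\pm \tilde x_{i-1})$ is nonzero and at most one of $\Psi'(\pm \tilde x_i)$ is nonzero. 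Combining this mutual exclusion with the uniform bounds $\Psi \le e$, $\Psi' \le \sqrt{54/e}$, $\Phi \le \sqrt{2\pi e}$, and $\Phi' \le \sqrt e$ from Lemma \ref{lemma:scaler_function_auxiliary_carmon}(2) yields
\begin{align*}
\|\nabla \tilde F\|_\infty \le e \cdot \sqrt e + \sqrt{54/e} \cdot \sqrt{2\pi e} = e^{3/2} + \sqrt{108\pi} \le 23,
\end{align*}
and rescaling by $\epsilon$ gives the second claim.

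For the lower bound, I would take $\epsilon_x \le \epsilon$ (so the unscaled threshold $\tilde\epsilon_x := \epsilon_x/\epsilon \le 1$) and choose $i$ to be minimal such that $|\tilde x_i| \ge \tilde\epsilon_x$ while $|\tilde x_{i+1}| < \tilde\epsilon_x$. The initialization $\tilde x_0 = 1$ serves as the base case of an induction ensuring WLOG $\tilde x_{i-1} \ge 1$, so $\Psi(\tilde x_{i-1}) \ge \Psi(1) = 1$. Since $|\tilde x_i| \le 1$ (otherwise a strictly smaller index would already witness the gap property), Lemma \ref{lemma:scaler_function_auxiliary_carmon}(3) delivers the single-term lower bound $\Psi(\tilde x_{i-1}) \Phi'(\tilde x_i) \ge 1$, and the non-negativity of the remaining three summands of $\nabla_i \tilde F$ preserves this to give $\nabla_i \tilde F(\tilde x) \ge 1$, i.e., $|\nabla_i F(x)| \ge \epsilon$.

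The main subtlety is the careful bookkeeping of the boundary index: one must select $i$ so that the inductively-maintained sign condition on $\tilde x_{i-1}$ aligns with the $\Psi(\tilde x_{i-1}) \ge \Psi(1) = 1$ regime, and so that $|\tilde x_i|$ itself is small enough to trigger the $\Phi'(\tilde x_i) \ge 1$ half of Lemma \ref{lemma:scaler_function_auxiliary_carmon}(3). The mutual-exclusion structure arising from $\Psi$'s compact support simultaneously allows a single term of the four-term decomposition to carry the lower bound and keeps the universal upper bound below $23$; once this structural observation is in hand, the rest of the argument is routine.
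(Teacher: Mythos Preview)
The paper does not prove this lemma itself; it defers entirely to Lemmas 1 and 2 of Carmon et al., and your proposal reproduces precisely that argument (rescale, differentiate term-by-term, use mutual exclusion of $\Psi(\pm\cdot)$ for the upper bound, and non-negativity plus property (3) for the lower bound). So the approach matches.

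One remark on the subtlety you already flagged: the lemma as stated in the paper leaves $\epsilon_x$ undefined and appears to have an index shift. With $\epsilon_x=\epsilon$ (so $\tilde\epsilon_x=1$), your induction gives $|\tilde x_{j}|\ge 1$ for all $j\le i$, but then the term $\Psi(\tilde x_{i-1})\Phi'(\tilde x_i)\ge 1$ requires $|\tilde x_i|\le 1$, which you cannot assume at the \emph{given} $i$; the clean Carmon et al.\ argument bounds $|\nabla_{i+1}\tilde F|$ (not $|\nabla_i\tilde F|$) via $\Psi(\tilde x_i)\Phi'(\tilde x_{i+1})$, using $|\tilde x_i|\ge 1$ and $|\tilde x_{i+1}|<1$. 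Your ``choose $i$ minimal and then $|\tilde x_i|\le 1$'' workaround does not quite close this, but since the downstream use in the paper only needs $\|\nabla F(x)\|_\infty>\epsilon$ whenever some coordinate is unactivated, the index-shifted version you essentially prove is what is actually required.
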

\begin{lemma}
    \label{lemma:auxiliary_clipping}
    Suppose $\phi(t)$ is as defined in \eqref{eq:smooth_clipping_def}:
    \begin{align*}
        \phi(t) = \begin{cases}
            t + \frac{1}{e} \int_{1/2}^{-t} \Psi(\tau) d\tau , & t < -1/2, \\
            t, & -1/2 \le t \le 1/2, \\
            t - \frac{1}{e} \int_{1/2}^t \Psi(\tau) d\tau , & t > 1/2.
        \end{cases}
    \end{align*}
    It satisfies the following properties:
    \begin{enumerate}
        \item $\phi(t)$ is infinitely differentiable.
        \item For all $t \in \mathbb{R}$, 
        \begin{align*}
            |\phi(t)| < 2, \ 0 \le \phi'(t) \le 1, \ |\phi''(t)| \le \sqrt{54/e^3}, \ |\phi'''(t)| \le 32.5 / e. 
        \end{align*}
    \end{enumerate}
\end{lemma}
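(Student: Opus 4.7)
\textbf{Proof proposal for Lemma \ref{lemma:auxiliary_clipping}.} The plan is to exploit the odd symmetry of $\phi$ and the bump-function properties of $\Psi$ recalled in Lemma \ref{lemma:scaler_function_auxiliary_carmon}. First I would observe that a direct change of variables shows $\phi(-t) = -\phi(t)$, so it suffices to analyze $t \ge 0$. On the interior of each piece $\phi$ is clearly $C^\infty$ since $\Psi$ is; the only potential issue is at the gluing point $t = 1/2$. There I would use that $\Psi(\tau) = 0$ for $\tau \le 1/2$ together with the fact that $\Psi$ is defined by $\exp(1 - 1/(2t-1)^2)$ for $t > 1/2$, whose derivatives of all orders vanish as $t \downarrow 1/2$. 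Consequently $\int_{1/2}^t \Psi(\tau)\,d\tau$ has all derivatives equal to $0$ at $t = 1/2^+$, so the piecewise definition matches $\phi(t) = t$ to infinite order at $t = 1/2$, giving smoothness.

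Next I would differentiate piecewise: for $t > 1/2$,
\begin{align*}
\phi'(t) = 1 - \tfrac{1}{e}\Psi(t), \qquad \phi''(t) = -\tfrac{1}{e}\Psi'(t), \qquad \phi'''(t) = -\tfrac{1}{e}\Psi''(t),
\end{align*}
and by the odd symmetry, the corresponding identities on $t < -1/2$ involve $\Psi(-t)$; on $|t| \le 1/2$ we have $\phi'(t) = 1$ and the higher derivatives vanish. Plugging in the bounds $0 \le \Psi \le e$, $0 \le \Psi' \le \sqrt{54/e}$, $|\Psi''| \le 32.5$ from Lemma \ref{lemma:scaler_function_auxiliary_carmon} immediately yields $0 \le \phi'(t) \le 1$, $|\phi''(t)| \le \sqrt{54/e^3}$, and $|\phi'''(t)| \le 32.5/e$ on every piece, and hence on all of $\mathbb{R}$ by continuity.

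The only part that requires a small calculation is the bound $|\phi(t)| < 2$. By oddness it is enough to show $\phi(t) < 2$ for $t > 1/2$; since $\phi(1/2) = 1/2$ and $\phi'(t) = 1 - \Psi(t)/e \ge 0$, I only need an upper bound on $\lim_{t\to\infty}\phi(t) = 1/2 + \int_{1/2}^\infty (1 - \Psi(\tau)/e)\,d\tau$. Substituting $u = 2\tau - 1$, this reduces to $\tfrac{1}{2}\int_0^\infty (1 - e^{-1/u^2})\,du$, which I would split at $u=1$: the integral over $[0,1]$ is bounded by $1$ trivially, and the integral over $[1,\infty)$ is bounded by $\int_1^\infty u^{-2}\,du = 1$ using $1 - e^{-x} \le x$. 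Thus $\phi(t) \le 1/2 + 1 < 2$, completing the bound.

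The only mildly delicate step is the $C^\infty$-gluing at $t = \pm 1/2$; everything else is a bookkeeping exercise that inherits its constants directly from the properties of $\Psi$ already established. I do not expect any genuine obstacle.
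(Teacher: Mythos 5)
Your proposal is correct and follows essentially the same route as the paper: both bound $|\phi|$ by noting $\phi'(t) = 1 - \Psi(t)/e \ge 0$ for $t>1/2$, using $1 - e^{-x} \le x$ to control the tail of $\int(1-\Psi(\tau)/e)\,d\tau$, and splitting the integral at the point corresponding to $\tau=1$; the remaining derivative bounds and smoothness at the gluing point are read off directly from the properties of $\Psi$, which is exactly what the paper defers to Lemma~\ref{lemma:scaler_function_auxiliary_carmon}. Your write-up is a little more explicit about the $C^\infty$ gluing and the odd symmetry, but there is no substantive difference.
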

\begin{proof}
    Other properties come immediately from Lemma \ref{lemma:scaler_function_auxiliary_carmon}, we only prove $|\phi(t)| < 2$. To see this, for $t \ge 1/2$, 
    \begin{align*}
        \phi(t) &= \frac{1}{2} + \int_{1/2}^{t} 1 - \frac{1}{e} \Psi(\tau) d\tau \le 1 + \int_{1}^{t} \frac{1}{(2\tau-1)^2} d\tau \le 2. 
    \end{align*}
    where in the first inequality, we used $1 - x \le \exp(-x)$. The same holds for $t \le -1/2$. 
\end{proof}

\begin{lemma}
    \label{lemma:smooth_indicator}
    Let $f_i(x), h_i(x)$ be defined as the following:
    \begin{align}
        f_i(x) &:= \Psi_{\epsilon}(x_{i-1}) \Phi_{\epsilon}(x_{i}) - \Psi_{\epsilon}(-x_{i-1}) \Phi_{\epsilon}(-x_{i}), \nonumber \\
        h_i(x) &:= \Gamma\left( 1 - \left(\tssum_{j=i}^{d_x} \Gamma^2(|x_j|/\epsilon) \right)^{1/2} \right),
        \label{eq:smooth_indicator}
    \end{align}
    where $\Gamma(t)$ is given by
    \begin{align*}
        \Gamma(t) := \frac{\int_{1/4}^{t} \Lambda(\tau) d\tau}{\int_{1/2}^{1/4} \Lambda(\tau) d\tau }, \quad \text{ where } \quad \Lambda(t) = \begin{cases}
            0, & t \le 1/4 \text{ or } t \ge 1/2, \\
            \exp\left( -\frac{1}{100(t-1/4)(1/2-t)} \right), & \text{otherwise}
        \end{cases}.
    \end{align*}
    Then, $h_i(x)$ satisfies $\indic{i > \prog_{\epsilon/4}(x)} \le h_i(x) \le \indic{i > \prog_{\epsilon/2}(x)}$, $O(1)$-Lipschitzness and the following:
    \begin{align*}
        f_i^{(k)}(x) h_i(x) = 0, \qquad \forall i \neq \prog_{\epsilon/2}(x) + 1, k \in \mathbb{N}_+. 
    \end{align*}
\end{lemma}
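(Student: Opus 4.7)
The plan is to argue all three claims directly from the support/saturation properties of the auxiliary bump $\Lambda$ and the integral $\Gamma$. Since $\Lambda \ge 0$ is supported on $[1/4, 1/2]$, the function $\Gamma$ is a smooth nondecreasing cutoff with $\Gamma(t) = 0$ for $t \le 1/4$ and $\Gamma(t) = 1$ for $t \ge 1/2$, and $\|\Gamma'\|_\infty$ is an absolute constant. Writing $s(x) := \bigl(\sum_{j=i}^{d_x} \Gamma^2(|x_j|/\epsilon)\bigr)^{1/2}$ so that $h_i(x) = \Gamma(1 - s(x))$, the two-sided indicator bound reduces to a clean case analysis: if $i \le \prog_{\epsilon/2}(x)$, then some $j \ge i$ has $|x_j| > \epsilon/2$, hence $\Gamma(|x_j|/\epsilon) = 1$, $s(x) \ge 1$, and $h_i(x) = \Gamma(\text{arg} \le 0) = 0$; if instead $i > \prog_{\epsilon/4}(x)$, then every $j \ge i$ satisfies $|x_j|/\epsilon \le 1/4$, so $s(x) = 0$ and $h_i(x) = \Gamma(1) = 1$.

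For the Lipschitz claim, I would apply the chain rule $\nabla h_i(x) = -\Gamma'(1 - s(x)) \nabla s(x)$. Since $\Gamma(|t|/\epsilon) \equiv 0$ on a neighborhood of $t = 0$, each summand in $s^2$ is globally smooth, and coordinatewise $|\partial_{x_j} s^2| = O(1/\epsilon)$. The key observation is that the factor $\Gamma'(1 - s(x))$ is supported on $s(x) \in [1/2, 3/4]$, so the potential singularity in $\nabla s = \nabla s^2 / (2 s)$ is avoided; combining the two bounds gives the Lipschitz constant (the ``$O(1)$'' is in the sense of the scaling convention in $\mathcal{F}(1)$ where $\epsilon^{-1}$ factors are absorbed into smoothness-parameter constants).

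For the vanishing product $f_i^{(k)}(x) h_i(x) = 0$ when $i \ne \prog_{\epsilon/2}(x) + 1$, I combine two one-sided supports. The indicator bound already shows $h_i(x) > 0 \Rightarrow i \ge \prog_{\epsilon/2}(x) + 1$. For the other direction, note $f_i$ depends on $x$ only through $(x_{i-1}, x_i)$, and if $|x_{i-1}| \le \epsilon/2$ both $\Psi_\epsilon(x_{i-1})$ and $\Psi_\epsilon(-x_{i-1})$ vanish. The crucial point is that by Lemma~\ref{lemma:scaler_function_auxiliary_carmon}, $\Psi$ is smooth and vanishes to \emph{infinite order} at $t = 1/2$, so $\Psi_\epsilon$ and all its derivatives vanish on the closed region $|x_{i-1}| \le \epsilon/2$; hence $f_i^{(k)}(x) \ne 0$ forces $|x_{i-1}| > \epsilon/2$, i.e.\ $i \le \prog_{\epsilon/2}(x) + 1$. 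The intersection of the two constraints pins $i$ to the single value $\prog_{\epsilon/2}(x) + 1$.

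The main technical subtlety I expect is handling the \emph{strictness} in the progress definition near the thresholds $\epsilon/4$ and $\epsilon/2$: one has to verify that the gap left between the ``activation threshold'' for $h_i$ (namely $\epsilon/4$ seen through $\Gamma$'s support in $[1/4, 1/2]$) and the ``activation threshold'' for $f_i$ (namely $\epsilon/2$ coming from $\Psi$'s support) is genuine, which in turn is what makes the smooth indicator compatible with the strict-inequality definition of $\prog_\alpha$. Everything else is bookkeeping with the chain rule and the standard bump/cutoff calculus already used in \cite{carmon2020lower}.
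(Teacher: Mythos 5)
Your proof is correct and takes essentially the same route as the paper: the two-sided indicator bound follows from the support/saturation of $\Gamma$ exactly as you describe, and the vanishing product $f_i^{(k)}(x)h_i(x)=0$ is obtained by intersecting the one-sided support of $h_i$ (from the indicator bound) with the one-sided support of $\Psi_\epsilon$ and its derivatives — the paper phrases this as a two-case split on $i \lessgtr \prog_{\epsilon/2}(x)+1$, which is the contrapositive of your "intersection pins $i$" argument. The only difference is that the paper defers the indicator bound and Lipschitz claim to Lemma~4 of Arjevani et al., whereas you write out the $s(x) = \bigl(\sum_{j\ge i}\Gamma^2(|x_j|/\epsilon)\bigr)^{1/2}$ calculus directly; your version is more self-contained.

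One caveat worth flagging: your chain-rule computation correctly yields $\|\nabla h_i\| = O(1/\epsilon)$, and the gloss that this is "$O(1)$ in the sense of the $\mathcal{F}(1)$ scaling convention" is not quite right — $\mathcal{F}(1)$ constrains the smoothness constants of $(f,g)$, not of auxiliary functions, and does not absorb $\epsilon^{-1}$ factors. The honest reading is that the Lipschitz constant is $O(1)$ with respect to the rescaled coordinate $x/\epsilon$ (the convention inherited from the Arjevani et al. construction that the paper cites without restating); the paper's proof does not resolve this ambiguity either, so your derivation is no less rigorous than the original, but the one-line justification you give should be replaced with this rescaling remark.
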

\begin{proof}
The construction of $h_i(x)$ is brought from \cite{arjevani2023lower} where the boundedness and $O(1)$-Lipschitzness are guaranteed from the proof of Lemma 4 in \cite{arjevani2023lower}. The last property follows from the fact that any $k^{th}$ order derivative of $\Psi(t)$ for $t \le 1/2$ is 0, and thus,
\begin{align*}
    \Psi_{\epsilon}^{(k)} (x_{i-1}) = 0, \qquad \forall i > \prog_{\epsilon/2}(x) + 1, 
\end{align*}
and
\begin{align*}
    h_i(x) \le \indic{i > \prog_{\epsilon/2}(x)} = 0, \qquad \forall i < \prog_{\epsilon/2}(x) + 1. 
\end{align*}
\end{proof}

\begin{lemma}[\cite{arjevani2023lower}, Lemma 15]
    \label{lemma:rho_smoothness}
    For all $x \in \mathbb{R}^d$, $\rho(x)$ is $1$-Lipschitz and $(3/R)$-smooth, that is,
    \begin{align*}
        \|J(x)\| \le 1, \ \|J(x^1) - J(x^2)\| \le \frac{3}{R}\|x^1 - x^2\|, \qquad \forall x^1, x^2 \in \mathbb{R}^d. 
    \end{align*}
\end{lemma}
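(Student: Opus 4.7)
The plan is to start from the explicit formula for the Jacobian of $\rho$. Writing $s(x) := \sqrt{1 + \|x\|^{2}/R^{2}}$ so that $\rho(x) = x/s(x)$, I will first compute, using the quotient rule and $\partial_{j} s(x) = x_{j}/(R^{2} s(x))$, the identity
\begin{equation*}
J(x) \;=\; \frac{1}{s(x)}\, I \;-\; \frac{1}{R^{2} s(x)^{3}}\, x x^{\top} \;=\; \frac{1}{s(x)}\left( I - \frac{\rho(x)\rho(x)^{\top}}{R^{2}} \right).
\end{equation*}

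For the $1$-Lipschitz claim, the factor $I - \rho(x)\rho(x)^{\top}/R^{2}$ is a rank-one symmetric perturbation of the identity whose eigenvalues are $1$ on the orthogonal complement of $\rho(x)$ and $1 - \|\rho(x)\|^{2}/R^{2}$ in the direction $\rho(x)$. Since $\|\rho(x)\|^{2} = R^{2}\|x\|^{2}/(R^{2}+\|x\|^{2}) \in [0,R^{2})$, both eigenvalues lie in $(0,1]$, so its operator norm equals $1$. Combined with the bound $1/s(x) \le 1$ coming from $s(x) \ge 1$, this gives $\|J(x)\| \le 1$.

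For the $(3/R)$-smoothness claim, I will use the mean value inequality $\|J(x^{1}) - J(x^{2})\| \le \sup_{z}\|DJ(z)\|\cdot\|x^{1} - x^{2}\|$, where $DJ(z)$ is regarded as a linear map $v \mapsto DJ(z)[v]$ from vectors to matrices and its norm is $\sup_{\|v\|\le 1}\|DJ(z)[v]\|$. Differentiating the two summands of the explicit formula for $J$ in the direction $v$ via the product rule yields
\begin{equation*}
DJ(z)[v] \;=\; -\frac{\langle z,v\rangle}{R^{2} s(z)^{3}}\, I \;+\; \frac{3\langle z,v\rangle}{R^{4} s(z)^{5}}\, z z^{\top} \;-\; \frac{1}{R^{2} s(z)^{3}}\bigl( v z^{\top} + z v^{\top}\bigr).
\end{equation*}
I then bound each of the three summands in operator norm using $\|zz^{\top}\| = \|z\|^{2}$, $\|vz^{\top}\| = \|v\|\|z\|$, and $|\langle z,v\rangle|\le\|z\|\|v\|$. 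Writing $u := \|z\|/R$ and factoring out $3\|v\|/R$, the sum reduces to the scalar expression $\frac{u + 2u^{3}}{(1+u^{2})^{5/2}}\cdot\frac{3\|v\|}{R}$, and the claimed bound follows from the elementary inequality $u + 2u^{3} \le (1+u^{2})^{5/2}$ for all $u\ge 0$.

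The main mild obstacle is the bookkeeping that ensures the constant comes out to exactly $3$ rather than a looser universal constant. I would handle this by factoring out $1/(R^{2}s(z)^{3})$ from all three matrix-valued pieces at once, absorbing the $zz^{\top}$ correction against the $v z^{\top} + z v^{\top}$ term via $\|z\|^{2}/(R^{2}s(z)^{2}) \le 1$, and concluding with the single-variable inequality above, which can be verified by checking it separately for $u\le 1$ (using $u + 2u^{3} \le 3u \le 3$ and $(1+u^{2})^{5/2} \ge 1$) and $u\ge 1$ (using $u + 2u^{3} \le 3u^{3}$ and $(1+u^{2})^{5/2} \ge u^{5}\cdot (1+u^{-2})^{5/2} \ge 3u^{3}$ since $u\ge 1$).
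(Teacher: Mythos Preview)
The paper does not supply its own proof of this lemma; it simply cites Lemma~15 of \cite{arjevani2023lower}. Your direct computation of $J(x)$ and $DJ(z)[v]$ is the natural route and matches what that reference does. The derivation of the Jacobian, the eigenvalue argument for $\|J(x)\|\le 1$, and the expression for $DJ(z)[v]$ are all correct, and the reduction of the smoothness bound to the scalar inequality $u+2u^{3}\le (1+u^{2})^{5/2}$ is valid.

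There is, however, a small gap in your final verification of that scalar inequality. For $u\le 1$ you write ``$u+2u^{3}\le 3u\le 3$ and $(1+u^{2})^{5/2}\ge 1$,'' but $3>1$, so these two bounds do not combine to yield $u+2u^{3}\le (1+u^{2})^{5/2}$. The inequality is nonetheless true; the cleanest fix is to square both (nonnegative) sides and observe
\[
(1+u^{2})^{5}-(u+2u^{3})^{2}=1+4u^{2}+6u^{4}+6u^{6}+5u^{8}+u^{10}>0,
\]
which settles all $u\ge 0$ at once and makes the case split unnecessary. Your $u\ge 1$ argument also needs a line of justification for the step $u^{5}(1+u^{-2})^{5/2}\ge 3u^{3}$ (e.g., via Bernoulli: $(1+u^{-2})^{5/2}\ge 1+\tfrac{5}{2}u^{-2}$, whence $u^{2}(1+u^{-2})^{5/2}\ge u^{2}+\tfrac{5}{2}\ge 3$ for $u\ge 1$), but the squaring argument above already covers it.
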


The following lemma is the key to converting the zero-chain argument to general randomized algorithms \cite{arjevani2023lower}:
\begin{lemma}[\cite{arjevani2023lower}, General Version of Lemma 5]
    \label{lemma:to_rand_algs}
    Let $\mathtt{A} \in \mA$ be a randomized algorithm that accesses functions $f,g: \mathbb{R}^{d_x \times d_y} \rightarrow \mathbb{R}$ through a stochastic oracle, and generates batched queries with norm-bounded $x$, {\it i.e.,} $\|x^{t,n}\| \le R$ for all $t \in \mathbb{N}, n \in [N]$, with some $R > 0$. Let $U$ be a random matrix uniformly distributed on $\texttt{Ortho}(d, d_x)$ with $d \gtrsim \frac{R^2 N d_x}{p} \log\left( \frac{Nd_x^2}{p \delta}\right)$ and $p, \delta > 0$, and let $u_j$ be the $j^{th}$ column of $U$. Additionally, let $\mathrm{O}_U$ be an oracle that takes a batched query $(\bm{x}, \bm{y}):= \{(x^n, y^n)\}_{n=1}^N$, and returns  $G_U(\bm{x}, \bm{y}; \xi)$ where $\xi$ is a random variable and $G_U$ is the oracle response to $(\bm{x}, \bm{y})$ and $\xi$ parameterized by $U$. Suppose the following holds for $G_U$ with probability $1$:
    \begin{align*}
        G_U(\bm{x}, \bm{y}; \xi) = G_{U'} (\bm{x}, \bm{y}; \xi), \quad \forall U'\in \texttt{Ortho}(d, d_x): u'_j = u_j \ \text{ for all } j = 1, 2, ... , \max_{n} \prog_{1/4} (U^\top x^n) + 1.
    \end{align*}
    and with probability at least $1-p$:
    \begin{align*}
        G_U(\bm{x}, \bm{y}; \xi) = G_{U'} (\bm{x}, \bm{y}; \xi), \quad \forall U' \in \texttt{Ortho}(d, d_x): u'_j = u_j \ \text{ for all } j = 1, 2, ... , \max_{n} \prog_{1/4} (U^\top x^n).
    \end{align*}
    When $\mathtt{A}$ is paired with $\mathrm{O}_U$, with probability at least $1 - \delta$, 
    \begin{align*}
        \max_{n\in[N]} \prog_{1/4} \left(U^\top x^{t,n} \right) < d_x, \qquad \forall t < \frac{d_x - \log(1/\delta)}{2 p}.
    \end{align*}
\end{lemma}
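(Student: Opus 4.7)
The plan is to prove this concentration-of-progression bound by combining a Haar-based resampling argument with spherical concentration for geometric overshoot and a Chernoff bound on the number of oracle responses that actually reveal new columns. The structural fact driving the proof is the rotational symmetry of $U$: for any $m\le d_x$, conditional on $(u_1,\dots,u_m)$ and on any randomness that is a function of only these columns, the remaining frame $(u_{m+1},\dots,u_{d_x})$ is uniformly distributed on the Stiefel manifold of orthonormal $(d_x-m)$-frames in $\mathrm{span}(u_1,\dots,u_m)^{\perp}$, a subspace of dimension at least $d-d_x$.

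I would first introduce a nondecreasing process $M_t$ equal to the smallest $m$ such that every oracle response returned through iteration $t-1$ is a function only of $u_1,\dots,u_m$ (together with external oracle randomness). The two hypotheses on $G_U$ control the dynamics of $M_t$: Condition (i) forces $M_{t+1}-M_t\le 1$ almost surely, while Condition (ii) yields $\PP(M_{t+1}>M_t\mid\text{history})\le p$. By induction on $t$, the batched query $\bm{x}^t$ produced by $\mathtt{A}$ is then measurable with respect to $u_1,\dots,u_{M_t}$ and external algorithmic randomness.

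Next I would bound the geometric-overshoot event that some unrevealed column $u_j$ with $j>M_t$ satisfies $|\langle u_j,x^{t,n}\rangle|>1/4$. Since $\|x^{t,n}\|\le R$ and, conditional on the history, $u_j$ is uniform on a sphere of dimension at least $d-d_x$, spherical concentration gives
\[
\PP\bigl(|\langle u_j,x^{t,n}\rangle|>1/4\bigr)\le 2\exp\!\bigl(-c(d-d_x)/R^{2}\bigr)
\]
for an absolute constant $c>0$. Union-bounding over $t\le T:=\lceil(d_x-\log(1/\delta))/(2p)\rceil$, $n\in[N]$, and $j\le d_x$, and invoking the hypothesis $d\gtrsim (R^{2}Nd_x/p)\log(Nd_x^{2}/(p\delta))$, makes the total overshoot probability at most $\delta/2$; on its complement, $\max_n\prog_{1/4}(U^{\top}x^{t,n})\le M_t$ for every $t\le T$. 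Finally, by stochastic domination, $M_T$ is bounded above by a Binomial$(T,p)$ random variable, and a standard Chernoff bound gives $\PP(M_T\ge d_x)\le\delta/2$ as soon as $T\le(d_x-\log(1/\delta))/(2p)$. Combining the two high-probability events yields the claim.

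The main technical obstacle will be making the Haar-resampling step fully rigorous. One must define a filtration in which $M_t$ is a stopping variable and the transcript is adapted to it in the right sense, so that appealing to Haar-invariance actually delivers the stated conditional uniformity on the Stiefel manifold at each step. Following the construction in \cite{arjevani2023lower}, the cleanest route is to define an explicit coupling in which, whenever the oracle response at step $t$ is determined only by $u_1,\dots,u_{J_t}$, one resamples $u_{M_t+1},\dots,u_{d_x}$ from the conditional Haar distribution; the two hypotheses on $G_U$ ensure that this resampling preserves the joint law of the transcript. Once the coupling is in place, the spherical-concentration and Chernoff bounds are routine and can be tuned to match the explicit dimension and horizon bounds stated in the lemma.
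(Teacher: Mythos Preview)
Your proposal is correct and follows essentially the same approach as the paper: both define a revealed-columns process (your $M_t$, the paper's $\gamma^t$) and use the two hypotheses on $G_U$ to show its increments are in $\{0,1\}$ with progress probability at most $p$ on the no-overshoot event $\{\max_n \prog_{1/4}(U^\top x^{t,n})\le M_t\}$. The paper simply defers the spherical-concentration and Chernoff steps to Appendix~B.1 of \cite{arjevani2023lower}, whereas you spell them out explicitly.
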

\begin{proof}
    The original proof in \cite{arjevani2023lower} uses the progress of returned gradient estimators directly as the overall progress of an algorithm:
    \begin{align*}
        \gamma^{t}_x = \max_{t'\le t,n\in[N]} \left( \prog_{0} (\hat{\grad}_x g(U^\top x^{t',n}, y^{t',n}; \xi^{t'}) \right).
    \end{align*}
    Our definition is a generalization of the above measure to include other signals $\hat{\grad}_y g$ and $\hat{y}$ in the oracle response. Specifically, we define
    \begin{align*}
        \gamma^{t} = \arg\min_{i\in [d_x]}: G_U(\bm{x}^{t'}, \bm{y}^{t'}; \xi^{t'}) = G_{U'} (\bm{x}^{t'}, \bm{y}^{t'}; \xi^{t'}), \quad \forall t' \le t, \ \forall U'\in \texttt{Ortho} (d, d_x): u_{j}' = u_j \text{ for all } j \in [i].
    \end{align*}
    With the above definition, under the event $\mathcal{B}^{t} := \{\max_{n} \prog_{1/4} (U^\top x^{t,n}) \le \gamma^{t-1}\}$ and filtration $\mathcal{G}^{t-1}$:
    \begin{align*}
        \mathcal{G}^{t-1} := \sigma(\xi_{\mathrm{A}}, (\bm{x}^0, \bm{y}^0), G_U(\bm{x}^0, \bm{y}^0; \xi^0), ..., (\bm{x}^{t-1}, \bm{y}^{t-1}), G_U(\bm{x}^{t-1}, \bm{y}^{t-1}; \xi^{t-1})),
    \end{align*}
    we can check that
    \begin{align*}
        \PP(\gamma^{t} - \gamma^{t-1} \notin \{0,1\}, \mathcal{B}^t | \mathcal{G}^{t-1}) = 0, \\
        \PP(\gamma^{t} - \gamma^{t-1} = 1 , \mathcal{B}^t | \mathcal{G}^{t-1}) \le p.
    \end{align*}
    The rest follows the same steps in \cite{arjevani2023lower}, hence we refer the readers to Appendix B.1 in the reference.
\end{proof}

\subsection{Proof of Lemma \ref{lemma:verify_y_gradient}}
We first note that $\Exs[\hat{\grad}_y g(x,y;\xi)] = \grad_y g(x,y)$, and
\begin{align*}
    \grad_y g(x,y) &= - 2 \left( y - \epsilon^2 \tssum_{i=1}^{d_x} f_i(x) \right) = - 2 \left( y - \epsilon^2 \tssum_{i=1}^{\prog_{\epsilon/2}(x)+1} f_i(x) \right).
\end{align*}
Furthermore, from Lemma \ref{lemma:smooth_indicator}, we can observe that
\begin{align*}
    \hat{\grad}_y g(x,y;\xi) - \grad_y g(x,y) &= 2\epsilon^2 \cdot f_{\prog_{\epsilon/2}(x) +1}(x) h_{\prog_{\epsilon/2}(x) +1}(x) (\xi/p - 1).
\end{align*}
Since both $f_i(x)$ and $h_i(x)$ are bounded by $O(1)$ for all $i$ and $x$, we have $\texttt{Var}(\hat{\grad}_y g(x,y;\xi)) \lesssim \epsilon^4/p$. The remaining properties follow straightforwardly from the construction.

\subsection{Proof of Lemma \ref{lemma:probabilistic_zerochain}}
We start with writing down the explicit formula for $\grad_x g_b(x,y)$:
\begin{align*}
    \grad_x g_b(x,y) &= - r_\epsilon \phi\left(\frac{y - F(x)}{r_{\epsilon}}\right) \phi'\left(\frac{y - F(x)}{r_{\epsilon}}\right) \grad F(x), 
\end{align*}
Thus, 
\begin{align*}
    \|\grad_x g_b(x,y)\|_{\infty} \lesssim r_{\epsilon} \|\grad F(x)\|_{\infty} \lesssim r_{\epsilon} \epsilon,
\end{align*}
where we use Lemma \ref{lemma:F_large_coordinates} for the last inequality. 

Now we show that probabilitic zero-chain property. Recall that we define
\begin{align*}
    \hat{\grad}_{x_i} g(x,y;\xi) = \grad_{x_i} g_b(x,y) \cdot (1 + h_i(x) (\xi/p - 1)), 
\end{align*}
and note that from Lemma \ref{lemma:smooth_indicator},
\begin{align*}
    \grad_{x_i} F(x) h_i(x) = O(\epsilon)\cdot \grad_{x_i} (f_{i-1}(x) + f_i(x)) h_i(x) = 0, \quad \forall i\neq \prog_{1/2} (x)+1.
\end{align*}
Therefore, the probabilistic zero-chain property is satisfied, and $\texttt{Var} \left(\hat{\grad}_x g(x,y;\xi)\right) \lesssim \frac{\|\grad_x g_b(x,y)\|_{\infty}^2}{p}$ which must be less than $O(\sigma^2)$. We also check the stochastic smoothness:
\begin{align*}
    \|\hat{\grad}_x g(x,y^1;\xi) - \hat{\grad}_x g(x,y^2;\xi)\| &\lesssim \left( \max_{y} \| \grad_{xy}^2 g_b(x,y) \| + \max_{y} \| \grad_{xy}^2 g_b(x,y) \|_{\infty} (\xi/p) \right) \|y^1 - y^2\| \\
    &\lesssim \left(\max_x \|F(x)\| + \max_x \|F(x)\|_{\infty} (\xi/p) \right) \|y^1 - y^2\| \\
    &\lesssim (1 + \epsilon \xi /p) \|y^1 - y^2\|. 
\end{align*}
Thus, we have $\Exs[ \|\hat{\grad}_x g(x,y^1;\xi) - \hat{\grad}_x g(x,y^2;\xi)\|^2 ] \lesssim \frac{\epsilon^2}{p} \|y^1 - y^2\|^2$, and this must be less than $\tilde{l}_{g,1}^2 \|y^1 - y^2\|^2$. Thus, we conclude that $p = \Omega\left(\max(r_{\epsilon}^2 \epsilon^2 / \sigma^2, \epsilon^2 / \tilde{l}_{g,1}^2)\right)$.

\subsection{Proof of Theorem \ref{theorem:final_lower_bound}}
We first reiterate the initial value-gap and smoothness properties of the modified hard instance: for any $U \in \texttt{Ortho}(d,d_x)$, let $F_U$ be the hyperobjective constructed with $(f_U, g_U)$ defined in \eqref{eq:final_lower_bound_instance}, paired with oracle responses defined in \eqref{eq:final_stochastic_gradients}. Then, 
    \begin{enumerate}
        \item $F_U(0) - F_U^* \le O(1)$.
        \item $f_U,g_U$ satisfies Assumption \ref{assumption:nice_functions}, \ref{assumption:hessian_lipschitz_g} with $O(1)$ smoothness parameters. 
        \item $\|\grad F_U(x)\| > O(\epsilon)$ if $\prog_{\epsilon/4}(U^\top x) < d_x$. 
        \item For all $(x,y) \in \mathbb{R}^{d_x \times d_y}$, 
        \begin{align*}
            \Exs[\|\hat{\grad}_x g_U(x,y;\xi) - \Exs[\hat{\grad}_x g_U(x,y; \xi)] \|^2] \le O(r_{\epsilon} \epsilon)^2 / p.
        \end{align*}
        \item For all $(x,y) \in \mathbb{R}^{d_x \times d_y}$, 
        \begin{align*}
            \Exs[\|\hat{\grad}_x g_U (x,y^1;\xi) - \hat{\grad}_x g_U(x,y^2; \xi)\|^2] \le O \left(1 + \frac{\epsilon^2}{p} \right) \cdot \|y^1 - y^2\|^2.
        \end{align*}
    \end{enumerate}

The first property immediately follows from the fact that $F(0) = F_U(0)$ and $F_U^* \ge F^*$, and the fact that $F^* = O(\epsilon^2 d_x) = O(1)$. Property 2 is trivial given our construction and Lemma \ref{lemma:scaler_function_auxiliary_carmon}. The third property follows from the proof of Lemma 6 in \cite{arjevani2023lower}. For rest two properties, note that
\begin{align*}
    \Exs[\hat{\grad}_x g_U(x,y;\xi)] = J(x)^\top U \hat{\grad}_x g_b(U^\top \rho(x), y;\xi),
\end{align*}
and since $\|J(x)\| = O(1)$ by Lemma \ref{lemma:rho_smoothness}, we can similarly show the bounded-variance and smoothness as in Lemma \ref{lemma:probabilistic_zerochain}. Finally, we can invoke Lemma \ref{lemma:to_rand_algs}, and we get the theorem.

\section{Auxiliary Lemmas}
\label{appendix:upper_bound}


In Lemma \ref{lem:SGD_sc_complexity} below, we obtain convergence rate of the projected gradient descent (PSGD) algorithm with either fixed or diminishing step sizes. These statements are adapted from the analogous statements for unconstrained SGD for strongly convex objectives (Thm. 5.7 in \cite{garrigos2023handbook} for fixed step sizes and Thm. 4.7 in \cite{bottou2010large} for diminishing step sizes).

\begin{lemma}[Convergence rate of PSGD for strongly convex objectives]\label{lem:SGD_sc_complexity}
    Let $f:\R^{p}\rightarrow \R$ be a $L$-smooth and $\mu$-strongly convex function for some $\mu,L>0$. Suppose $f^{*}:=\inf_{x} f(x)>-\infty$ and denote $x^{*}:=\argmin_{x} f(x)$. Furthermore, let $\mathcal{B}$ be a convex set containing $x^*$.  Suppose $G(x,\xi)$ is an unbiased stochastic gradient estimator for $f$, that is, $\E[G(x,\xi)]=\nabla f(x)$ for all $x\in \mathcal{B}$. Further assume that the variance of the gradient estimation error is bounded: $\E[\lVert G(x,\xi) - \nabla f(x)\rVert^{2}]\le \sigma^{2}$ for all $x\in \mathcal{B}$. Let $\rho:=\frac{2\mu L}{\mu+L}$. Consider the following PSGD iterates: 
    \begin{align}
        x_{t+1} \leftarrow \Pi_{\mathcal{B}} \left\{ x_{t} - \alpha_{t} G(x_{t},\xi_{t}) \right\}.
    \end{align}
    Then the following hold:
    \begin{description}
        \item[(i)] (\textit{fixed step size}) Suppose $\alpha_{t}\equiv \alpha < 2L/(\mu+L)$. Fix $T > \frac{4L \log(L/\mu)}{\mu}$. Then for all $0 \le t \le T$, 
        \begin{align}\label{eq:SGD_sc_complexity}
    \E[ \lVert x^{t} - x^{*} \rVert^{2} ] \le (1-\mu \alpha)^t \|x^0 - x^*\|^2 + \frac{\alpha \sigma^2}{\mu}.
    \end{align}
    Taking $\alpha = \frac{8\log T}{\mu T}$, we have $\Exs[\|x^T - x^*\|^2] \le \frac{1}{T^4} \|x^0 - x^*\|^2 + \frac{8 \log T}{\mu^2 T} \sigma^2$.

        \item[(ii)] (\textit{diminishing step size}) Suppose $\alpha_{t}= \frac{\beta}{\gamma + t}$, where $\beta>1/\rho$, $\gamma>0$ are constants.  Then for all $t\ge 0$, 
    \begin{align}\label{eq:SGD_sc_complexity}
         \E[ \lVert x_{t} - x^{*} \rVert^{2} ] \le \frac{\nu}{\gamma+ t}, 
    \end{align}
    where $\nu:= \max\left\{ \frac{\beta^{2}\sigma^{2}L}{2(\beta \rho - 1)},\, \gamma \lVert x_{0}-x^{*} \rVert^{2} \right\}$. In particular, the assertion holds for $\beta=2/\rho=\frac{\mu+L}{\mu L}$ and $\nu=\max\left\{ \frac{(\mu+L)^{2}\sigma^{2}L}{2\mu^{2}L^{2}},\, \gamma \lVert x_{0}-x^{*} \rVert^{2} \right\}$.
    \end{description}

\end{lemma}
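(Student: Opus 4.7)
The plan is to establish both parts of Lemma~\ref{lem:SGD_sc_complexity} via the standard one-step descent recursion for PSGD, exploiting that $x^{*}\in\mathcal{B}$ so that projection onto the convex set $\mathcal{B}$ is non-expansive with respect to $x^{*}$. Concretely, I would start from
\begin{align*}
\lVert x^{t+1} - x^{*}\rVert^{2} = \lVert \Pi_{\mathcal{B}}(x^{t} - \alpha_{t} G(x^{t},\xi_{t})) - \Pi_{\mathcal{B}}(x^{*})\rVert^{2} \le \lVert x^{t} - \alpha_{t} G(x^{t},\xi_{t}) - x^{*}\rVert^{2},
\end{align*}
expand the square, and take conditional expectation given $\mathcal{F}_{t}:=\sigma(\xi_{0},\dots,\xi_{t-1})$. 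Using unbiasedness $\E[G(x^{t},\xi_{t})\mid\mathcal{F}_{t}]=\nabla f(x^{t})$ and the variance bound $\E[\lVert G(x^{t},\xi_{t})-\nabla f(x^{t})\rVert^{2}\mid\mathcal{F}_{t}]\le\sigma^{2}$, this gives
\begin{align*}
\E[\lVert x^{t+1}-x^{*}\rVert^{2}\mid\mathcal{F}_{t}] \le \lVert x^{t}-x^{*}\rVert^{2} - 2\alpha_{t}\langle \nabla f(x^{t}), x^{t}-x^{*}\rangle + \alpha_{t}^{2}\lVert\nabla f(x^{t})\rVert^{2} + \alpha_{t}^{2}\sigma^{2}.
\end{align*}

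Next, I invoke Nesterov's co-coercivity inequality for $L$-smooth $\mu$-strongly convex functions applied at $x^{t}$ and $x^{*}$ (recalling $\nabla f(x^{*})=0$):
\begin{align*}
\langle \nabla f(x^{t}), x^{t}-x^{*}\rangle \ge \tfrac{\mu L}{\mu+L}\lVert x^{t}-x^{*}\rVert^{2} + \tfrac{1}{\mu+L}\lVert \nabla f(x^{t})\rVert^{2}.
\end{align*}
Substituting and choosing the step size so that $\alpha_{t}\le 2/(\mu+L)$ (which is implied by the stated step-size restrictions), the $\lVert\nabla f(x^{t})\rVert^{2}$ contribution becomes nonpositive and may be dropped, producing the clean recursion
\begin{align*}
\E[\lVert x^{t+1}-x^{*}\rVert^{2}\mid\mathcal{F}_{t}] \le (1-\alpha_{t}\rho)\lVert x^{t}-x^{*}\rVert^{2} + \alpha_{t}^{2}\sigma^{2},
\end{align*}
with $\rho=2\mu L/(\mu+L)\ge\mu$.

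For part (i) I iterate the recursion with $\alpha_{t}\equiv\alpha$ and bound the accumulated noise $\alpha^{2}\sigma^{2}\sum_{s=0}^{t-1}(1-\alpha\rho)^{s}\le \alpha\sigma^{2}/\rho\le\alpha\sigma^{2}/\mu$, giving \eqref{eq:SGD_sc_complexity}; the specialization $\alpha=8\log T/(\mu T)$ and the hypothesis on $T$ make $(1-\mu\alpha)^{T}\le T^{-4}$ by the elementary inequality $(1-x)\le e^{-x}$. For part (ii) I proceed by induction on $t$. The base case $t=0$ holds by the definition of $\nu$. For the inductive step, assuming $\E[\lVert x^{t}-x^{*}\rVert^{2}]\le \nu/(\gamma+t)$ and using $\alpha_{t}=\beta/(\gamma+t)$, the one-step inequality gives
\begin{align*}
\E[\lVert x^{t+1}-x^{*}\rVert^{2}] \le \Bigl(1-\tfrac{\beta\rho}{\gamma+t}\Bigr)\tfrac{\nu}{\gamma+t} + \tfrac{\beta^{2}\sigma^{2}}{(\gamma+t)^{2}}.
\end{align*}
The standard manipulation (multiply through by $(\gamma+t)^{2}$, use $\beta\rho>1$, and the inequality $(\gamma+t)^{2}-(\beta\rho-1)(\gamma+t)\le (\gamma+t-1)(\gamma+t)\le(\gamma+t)(\gamma+t+1)$) together with $\nu\ge \beta^{2}\sigma^{2}L/(2(\beta\rho-1))\ge\beta^{2}\sigma^{2}/(\beta\rho-1)$ closes the induction to yield $\nu/(\gamma+t+1)$.

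The main obstacle is minor and purely bookkeeping: ensuring the step-size conditions stated ($\alpha<2L/(\mu+L)$ in (i) and $\beta>1/\rho$ in (ii)) really do make the $\lVert\nabla f(x^{t})\rVert^{2}$ coefficient nonpositive so that the recursion reduces to the geometric form. Everything else is a textbook PSGD argument and requires only that $\mathcal{B}$ is convex with $x^{*}\in\mathcal{B}$ so that $\lVert \Pi_{\mathcal{B}}(u)-x^{*}\rVert\le\lVert u-x^{*}\rVert$; no further structural property of $\mathcal{B}$ is needed.
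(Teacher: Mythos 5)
Your proof is correct and follows essentially the same route as the paper's: non-expansiveness of projection onto a convex set containing $x^{*}$, expansion of the squared distance, conditional expectation with unbiasedness and the variance bound, Nesterov co-coercivity to kill the $\|\nabla f\|^{2}$ term under $\alpha_{t}\le 2/(\mu+L)$, and then geometric-series summation for (i) and induction for (ii). One small remark: the recursion you (correctly) derive has noise coefficient $\alpha_{t}^{2}\sigma^{2}$, so closing the induction in (ii) with the stated $\nu$ requires $\beta^{2}\sigma^{2}L/(2(\beta\rho-1))\ge \beta^{2}\sigma^{2}/(\beta\rho-1)$, i.e.\ $L\ge 2$; this constant mismatch (the spurious $L/2$ factor, inherited from a function-value version of the recursion) is also present in the paper's own induction step, so it is not a defect specific to your argument.
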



\begin{proof}
    Let $\bar{x}^t := x^t - \alpha G(x^t; \xi_t)$. Then 
    \begin{align*}
        \|\bar{x}^{t} - x^*\|^2 &= \|\bar{x}^{t} - x^*\|^2 + \|x^t - x^*\|^2 + 2\vdot{\bar{x}^{t} - x^t}{x^t - x^*} \\
        &= \alpha_t^2 \|G(x^t; \xi_t)\|^2 + \|x^t - x^*\|^2 - 2 \alpha_t \vdot{x^t - x^*}{G(x^t; \xi_t)}.
    \end{align*}
    Let $\mF_{t}=\sigma(\xi_{1},\dots,\xi_{t-1})$ denote the $\sigma$-algebra generated by the random variables $\xi_{1},\dots,\xi_{t-1}$. Then $x_{t}$ is measurable w.r.t. $\mF_{t}$, so taking conditional expectation with respect to $\mF_{t}$ and using the co-coercivity of strongly convex functions, we have
    \begin{align}\label{eq:PSGD_lem_pf1}
        \Exs[\|\bar{x}^{t} - x^*\|^2 \,|\, \mF_{t}] &= \alpha_{t}^2 \sigma^2 + \alpha_t^2 \|\grad f(x^t)\|^2 + \|x^t - x^*\|^2 - 2\alpha_t \underbrace{\vdot{x^t - x^*}{\grad f(x^t)}}_{\ge \frac{\mu L}{\mu+L}\|x^t - x^*\|^2 + \frac{1}{\mu+L}\|\grad f(x^t)\|^2} \\
        &\le \left(1- \frac{2\alpha_{t} \mu L}{\mu+L} \right) \|x^t - x^*\|^2 + \alpha_{t}^2 \sigma^2,
    \end{align}
    given that $\alpha_{t}\le 2/(\mu+L)$. Then by the projection lemma and taking the full expectation, we have
    \begin{align*}
        \Exs[\|x^{t+1} - x^*\|^2 ] \le \Exs[\|\bar{x}^t - x^*\|^2] \le \left(1- \alpha_{t}\rho \right) \E[\|x^t - x^*\|^2] + \alpha_{t}^2 \sigma^2,
    \end{align*}
    where we have denoted $\rho:=\frac{2\mu L}{\mu+L}$.
    
    To derive \textbf{(i)}, suppose $\alpha_{t}\equiv \alpha<2L/(\mu+L)$. Then applying \eqref{eq:PSGD_lem_pf1} recursively, we have
    \begin{align*}
        \Exs[\|x^{t} - x^*\|^2] &\le (1 - \mu \alpha)^t \|x^0 - x^*\|^2 + 2\sigma^2 \sum_{j=0}^t  (1-\alpha \mu)^j \alpha^2 \\
        &\le (1 - \mu \alpha)^t \|x^0 - x^*\|^2 + \frac{\alpha \sigma^2}{\mu}.
    \end{align*}

    Next, we show \textbf{(ii)} by induction. 
    Indeed, \eqref{eq:SGD_sc_complexity} holds for $t=1$ by the choice of $\nu$. Denoting $\hat{t}:=\gamma+t$, by the induction hypothesis and by the choices of step size $\alpha_{t}$ and $\nu$, 
    \begin{align}
          \E[ \lVert x_{t+1} -x^{*} \rVert^{2}] &\le \left( 1-\frac{\beta\rho}{\hat{t}}\right) \frac{\nu}{\hat{t}} + \frac{L\sigma^{2}\beta^{2}}{2\hat{t}^{2}} \\
          &\le \left( \frac{\hat{t}-1}{\hat{t}^{2}} \right) \nu \underbrace{-  \left( \frac{\beta\rho-1}{\hat{t}^{2}} \right) \nu + \frac{L\sigma^{2}\beta^{2}}{2\hat{t}^{2}}}_{\le 0} \\
          &\le  \frac{\hat{t}-1}{(\hat{t}-1)(\hat{t}+1)}  \nu \le  \frac{\nu}{\gamma+t}.
    \end{align}
    This shows the assertion. 
\end{proof}

\begin{lemma}
\label{lem:llam}
    $\mL_{\lambda}^*(x)$ is $L$-smooth with $L:= \frac{6 l_{g,1}}{\mu_g} \left(l_{f,1} + \frac{l_{g,1}^2}{\mu_g} + \frac{l_{f,0} l_{g,1} l_{g,2}}{\mu_g^2} \right)$.
\end{lemma}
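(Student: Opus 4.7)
The plan is to directly verify Lipschitzness of $\nabla \mathcal{L}_\lambda^*$ starting from the explicit envelope-style formula \eqref{eq:grad_penalty}. Fix $x_1, x_2 \in \mathbb{R}^{d_x}$ and decompose
\[
\nabla \mathcal{L}_\lambda^*(x_1) - \nabla \mathcal{L}_\lambda^*(x_2) = \mathrm{(A)} + \mathrm{(B)} - \mathrm{(C)},
\]
where $\mathrm{(A)} := \nabla_x f(x_1, y_\lambda^*(x_1)) - \nabla_x f(x_2, y_\lambda^*(x_2))$, and $\mathrm{(B)} - \mathrm{(C)}$ is the difference of the two $\lambda$-scaled $\nabla_x g$ brackets evaluated at $x_1$ versus $x_2$. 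Term $\mathrm{(A)}$ is handled by joint $l_{f,1}$-smoothness of $f$ together with the $(4l_{g,1}/\mu_g)$-Lipschitzness of $y_\lambda^*$ stated in Lemma \ref{lem:hyper_error}, giving a bound of order $(l_{f,1} l_{g,1}/\mu_g)\|x_1-x_2\|$.

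For $\mathrm{(B)}-\mathrm{(C)}$, the key observation is that the factor of $\lambda$ must be absorbed into a scaled displacement that is bounded and Lipschitz independent of $\lambda$. Applying the fundamental theorem of calculus in the $y$-argument gives
\[
\lambda[\nabla_x g(x, y_\lambda^*(x)) - \nabla_x g(x, y^*(x))] = M(x)\, w(x), \qquad w(x) := \lambda(y_\lambda^*(x) - y^*(x)),
\]
with $M(x) := \int_0^1 \nabla_{xy}^2 g\big(x,\,(1-s) y^*(x) + s y_\lambda^*(x)\big)\,ds$ satisfying $\|M(x)\| \le l_{g,1}$. By Lemma \ref{lem:y_bd}, $\|w(x)\| \le l_{f,0}/\mu_g$. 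Telescoping via $M(x_1) w(x_1) - M(x_2) w(x_2) = M(x_1)(w(x_1)-w(x_2)) + (M(x_1)-M(x_2)) w(x_2)$ reduces the task to Lipschitz estimates for $M$ and $w$ in $x$. Lipschitzness of $M$ follows from $l_{g,2}$-Lipschitzness of $\nabla_{xy}^2 g$ combined with the Lipschitzness of $y^*$ and $y_\lambda^*$; multiplied by $\|w(x_2)\|\le l_{f,0}/\mu_g$ this contributes a term of order $(l_{g,1} l_{g,2} l_{f,0}/\mu_g^2)\|x_1-x_2\|$.

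The main step is controlling $\|w(x_1) - w(x_2)\|$ by a constant independent of $\lambda$. Subtracting the first-order optimality conditions $\nabla_y f(x, y_\lambda^*(x)) + \lambda \nabla_y g(x, y_\lambda^*(x)) = 0$ (equation \eqref{eq:1st_order_opt_y_lambda}) and $\nabla_y g(x, y^*(x)) = 0$ and integrating $\nabla_{yy}^2 g$ along the segment from $y^*(x)$ to $y_\lambda^*(x)$ yields the closed-form identity
\[
w(x) = -N(x)^{-1} \nabla_y f(x, y_\lambda^*(x)), \qquad N(x) := \int_0^1 \nabla_{yy}^2 g\big(x,\,(1-s)y^*(x) + s y_\lambda^*(x)\big)\,ds,
\]
where $\mu_g I \preceq N(x) \preceq l_{g,1} I$. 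Invoking the resolvent identity $A^{-1}-B^{-1} = A^{-1}(B-A)B^{-1}$ together with the Lipschitz bounds on $N(\cdot)$ (from $l_{g,2}$-Lipschitzness of $\nabla_{yy}^2 g$ and Lipschitzness of $y^*, y_\lambda^*$) and on $x \mapsto \nabla_y f(x, y_\lambda^*(x))$ produces a $\lambda$-independent Lipschitz constant for $w$ of order $(l_{g,1}/\mu_g^2)(l_{f,1} + l_{g,2} l_{f,0}/\mu_g)$. Plugging this back, aggregating the three contributions, and using $\mu_g \le l_{g,1}$ to absorb lower-order cross terms organizes the estimate into the three groups $l_{f,1}$, $l_{g,1}^2/\mu_g$, and $l_{f,0} l_{g,1} l_{g,2}/\mu_g^2$ multiplied by the outer factor $l_{g,1}/\mu_g$, yielding the stated $L$. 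The real difficulty is bookkeeping: identifying the right $\lambda$-free representation $w(x) = -N(x)^{-1} \nabla_y f(x, y_\lambda^*(x))$ and carefully tracking constants through the resolvent identity so that no hidden $\lambda$ re-enters the final bound.
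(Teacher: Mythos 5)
The paper's own ``proof'' is a one-line citation to Lemma~B.8 of \cite{chen2023near}, so your self-contained derivation is a genuinely different contribution. Your route is sound and captures the essential difficulty: after splitting $\nabla\mathcal{L}_\lambda^*(x_1)-\nabla\mathcal{L}_\lambda^*(x_2)$ into the $f$ part and the $\lambda$-scaled $\nabla_x g$ bracket, the $\lambda$ must be absorbed. Your key observation --- that the scaled displacement $w(x)=\lambda(y_\lambda^*(x)-y^*(x))$ admits the $\lambda$-free closed form $w(x)=-N(x)^{-1}\nabla_y f(x,y_\lambda^*(x))$ with $N(x)=\int_0^1\nabla_{yy}^2 g(x,(1-s)y^*(x)+sy_\lambda^*(x))\,ds \succeq \mu_g I$ --- is exactly the right device and simultaneously gives the bound $\|w\|\le l_{f,0}/\mu_g$ (sharper than going through Lemma~\ref{lem:y_bd}) and a $\lambda$-independent Lipschitz constant via the resolvent identity for $N^{-1}$. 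The FTC representation $\lambda[\nabla_x g(x,y_\lambda^*)-\nabla_x g(x,y^*)]=M(x)w(x)$ with $\|M\|\le l_{g,1}$ and $l_{g,2}$-governed $M$-Lipschitzness is likewise correct, and all ingredients used (Lipschitzness of $y^*$, $y_\lambda^*$, boundedness of $\nabla_y f$, $l_{g,2}$-Lipschitzness of $\nabla_{xy}^2 g$ and $\nabla_{yy}^2 g$) are covered by Assumptions~\ref{assumption:nice_functions}--\ref{assumption:hessian_lipschitz_g}.

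One point to tighten: the final aggregation is glossed over. Tracking your estimates, the contribution $M(x_1)(w(x_1)-w(x_2))$ produces a term of order $\frac{l_{g,1}}{\mu_g}\cdot\frac{l_{f,1}l_{g,1}}{\mu_g}\|x_1-x_2\|$, i.e.\ an inner-bracket term $\frac{l_{f,1}l_{g,1}}{\mu_g}$, whereas the stated $L$ has only $l_{f,1}$ and $\frac{l_{g,1}^2}{\mu_g}$ inside the bracket. The inequality $\mu_g\le l_{g,1}$ alone does not let you absorb $\frac{l_{f,1}l_{g,1}}{\mu_g}$ into either of those without an additional assumption like $l_{f,1}\le l_{g,1}$, so the claim that your bookkeeping ``yields the stated $L$'' should be either proved with that extra step made explicit, or weakened to ``yields an $L$ of the same order.'' (You also cite Lemma~\ref{lem:hyper_error} for the $(4l_{g,1}/\mu_g)$-Lipschitzness of $y_\lambda^*$; that lemma actually states the $|\mathcal{L}_\lambda^*-F|$ bound --- the paper itself makes this same mis-reference, so the fact is available, but the pointer is off.)
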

\begin{proof}
    See Lemma B.8 in \cite{chen2023near}. 
\end{proof}



\begin{lemma}\label{lem:L_strong_convex}
    For $\lambda\ge 2l_{f,1}/\mu_{g}$, $\mathcal{L}_{\lambda}(x,\cdot)$ is $(\lambda \mu_{g}/2)$-strongly convex for each $x \in \mathbb{R}^{d_x}$.
\end{lemma}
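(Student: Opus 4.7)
The plan is to establish strong convexity by directly examining the Hessian of $\mathcal{L}_\lambda(x, \cdot)$ in the $y$ variable and bounding it from below in the positive semidefinite order. Since $\mathcal{L}_\lambda(x, y) = f(x, y) + \lambda (g(x, y) - g^*(x))$ and the term $g^*(x)$ does not depend on $y$, taking two $y$-derivatives gives
\begin{equation*}
\nabla_{yy}^2 \mathcal{L}_\lambda(x, y) = \nabla_{yy}^2 f(x, y) + \lambda \nabla_{yy}^2 g(x, y).
\end{equation*}

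Next I would invoke Assumption~\ref{assumption:nice_functions}. The $\mu_g$-strong convexity of $g(x, \cdot)$ gives $\nabla_{yy}^2 g(x, y) \succeq \mu_g I$, while the joint $l_{f,1}$-smoothness of $f$ implies in particular that $\nabla_{yy}^2 f(x, y) \succeq -l_{f,1} I$ (as the block-diagonal of a matrix with operator norm at most $l_{f,1}$). Combining these,
\begin{equation*}
\nabla_{yy}^2 \mathcal{L}_\lambda(x, y) \succeq (\lambda \mu_g - l_{f,1})\, I.
\end{equation*}

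Finally, the hypothesis $\lambda \geq 2 l_{f,1}/\mu_g$ yields $\lambda \mu_g \geq 2 l_{f,1}$, hence $\lambda \mu_g - l_{f,1} \geq \lambda \mu_g/2$, so
\begin{equation*}
\nabla_{yy}^2 \mathcal{L}_\lambda(x, y) \succeq \tfrac{\lambda \mu_g}{2}\, I,
\end{equation*}
which is equivalent to the claim that $\mathcal{L}_\lambda(x, \cdot)$ is $(\lambda \mu_g/2)$-strongly convex. There is no real obstacle here; the only substantive step is recognizing that $l_{f,1}$-smoothness of $f$ controls the $yy$-block Hessian from below by $-l_{f,1} I$, after which the choice of $\lambda$ absorbs this negative contribution.
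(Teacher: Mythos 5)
Your proof is conceptually on the same track as the paper's: both decompose $\mathcal{L}_\lambda(x,\cdot) = f(x,\cdot) + \lambda\, g(x,\cdot) + \text{const}$, use $l_{f,1}$-smoothness to lower bound the curvature contribution of $f$ by $-l_{f,1}$, use $\mu_g$-strong convexity of $g$ to lower bound its curvature by $\mu_g$, and absorb the negative term via $\lambda \geq 2l_{f,1}/\mu_g$. The difference is in the realization: the paper works entirely with the first-order inequality characterizations, namely
\begin{equation*}
\mathcal{L}_{\lambda}(x,y') - \mathcal{L}_{\lambda}(x,y) \ge \langle \nabla_{y} \mathcal{L}_{\lambda}(x,y),\, y'-y \rangle + \tfrac{\lambda \mu_{g}-l_{f,1}}{2}\lVert y'-y \rVert^{2},
\end{equation*}
obtained by summing the quadratic lower bound from smoothness of $f$ and the quadratic lower bound from strong convexity of $g$, whereas you pass through the Hessian $\nabla^2_{yy}\mathcal{L}_\lambda$. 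The Hessian route is cleaner to state but carries a hidden hypothesis: Assumption~\ref{assumption:nice_functions} only posits that $f$ is continuously differentiable with Lipschitz gradient, not that it is twice differentiable. (Assumption~\ref{assumption:hessian_lipschitz_g} makes the second derivatives of $g$ available, but says nothing about $\nabla^2_{yy} f$.) So $\nabla^2_{yy} f(x,y) \succeq -l_{f,1} I$ is not literally licensed by the stated assumptions. This is a minor gap and easily repaired: either note that $l_{f,1}$-smoothness implies a.e.\ twice differentiability and the Hessian bound holds where defined, which suffices for strong convexity, or — more directly — replace the Hessian step with the first-order quadratic lower bounds as the paper does, which needs no second derivatives at all.
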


\begin{proof}
    Since $f(x,\cdot)$ is $l_{f,1}$-smooth and $g(x,\cdot)$ is $\mu_{g}$-strongly convex, 
    \begin{align}
        \mathcal{L}_{\lambda}(x,y') - \mathcal{L}_{\lambda}(x,y) \ge \langle \nabla_{y} \mathcal{L}_{\lambda}(x,y),\, y'-y \rangle + \frac{\lambda \mu_{g}-l_{f,1}}{2}\lVert y'-y \rVert^{2}.
    \end{align}
    Hence if $\lambda \mu_{g} \ge 2 l_{f,1}$, then $\mathcal{L}_{\lambda}(x,\cdot)$  is $(\lambda \mu_{g}/2)$-strongly convex. 
\end{proof}

\begin{lemma}\label{lem:y_bd}
    For $\lambda\ge 2l_{f,1}/\mu_{g}$, it holds that for each $x\in \mathbb{R}^{d_x}$, 
    \begin{align}
        \lVert y^{*}_{\lambda}(x) - y^{*}(x) \rVert \le \frac{2 l_{f,0}}{\lambda \mu_{g}} = O(\lambda^{-1}). 
    \end{align}
\end{lemma}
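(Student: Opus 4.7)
The plan is to combine the strong convexity of $\mathcal{L}_{\lambda}(x,\cdot)$ (Lemma \ref{lem:L_strong_convex}) with the first-order optimality conditions for both $y^{*}(x)$ and $y^{*}_{\lambda}(x)$, and to exploit the fact that the only ``perturbation'' of the optimality condition for $g(x,\cdot)$ when passing to $\mathcal{L}_{\lambda}(x,\cdot)$ is the small term $\lambda^{-1}\nabla_{y} f$, whose magnitude is uniformly controlled by $l_{f,0}$.

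More concretely, I would first record the two optimality identities
\[
\nabla_{y} g(x, y^{*}(x)) = 0, \qquad \nabla_{y} f(x, y_{\lambda}^{*}(x)) + \lambda \nabla_{y} g(x, y_{\lambda}^{*}(x)) = 0.
\]
The second identity is \eqref{eq:1st_order_opt_y_lambda}. Combining these gives
\[
\nabla_{y}\mathcal{L}_{\lambda}(x, y^{*}(x)) \;=\; \nabla_{y} f(x, y^{*}(x)) + \lambda \nabla_{y} g(x, y^{*}(x)) \;=\; \nabla_{y} f(x, y^{*}(x)),
\]
while $\nabla_{y}\mathcal{L}_{\lambda}(x, y^{*}_{\lambda}(x)) = 0$.

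Next, by Lemma \ref{lem:L_strong_convex}, $\mathcal{L}_{\lambda}(x,\cdot)$ is $(\lambda \mu_{g}/2)$-strongly convex whenever $\lambda \ge 2 l_{f,1}/\mu_{g}$, so applying the standard strong-convexity inequality between the two points $y^{*}(x)$ and $y^{*}_{\lambda}(x)$ yields
\[
\langle \nabla_{y}\mathcal{L}_{\lambda}(x, y^{*}(x)) - \nabla_{y}\mathcal{L}_{\lambda}(x, y^{*}_{\lambda}(x)),\, y^{*}(x) - y^{*}_{\lambda}(x) \rangle \;\ge\; \frac{\lambda \mu_{g}}{2}\, \|y^{*}(x) - y^{*}_{\lambda}(x)\|^{2}.
\]
Substituting the two identities above, the left-hand side is exactly $\langle \nabla_{y} f(x, y^{*}(x)), y^{*}(x) - y^{*}_{\lambda}(x) \rangle$, which is bounded above by $l_{f,0}\, \|y^{*}(x) - y^{*}_{\lambda}(x)\|$ via Cauchy–Schwarz and part 2 of Assumption \ref{assumption:nice_functions}. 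Dividing through by $\|y^{*}(x) - y^{*}_{\lambda}(x)\|$ (handling the trivial case where the two coincide separately) gives the claimed bound $\|y^{*}_{\lambda}(x) - y^{*}(x)\| \le \frac{2 l_{f,0}}{\lambda \mu_{g}}$.

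There is no real obstacle here: the argument is a two-line consequence of strong convexity plus first-order optimality, and Lemma \ref{lem:L_strong_convex} (invoked under the standing hypothesis $\lambda \ge 2 l_{f,1}/\mu_{g}$) already supplies the needed convexity modulus. The only thing to be careful about is to use the strong convexity of $\mathcal{L}_{\lambda}$ rather than of $g$: using strong convexity of $g$ alone with $\nabla_{y} g(x, y^{*}_{\lambda}(x)) = -\lambda^{-1}\nabla_{y} f(x, y^{*}_{\lambda}(x))$ would give the slightly sharper constant $l_{f,0}/(\lambda \mu_{g})$, but the route through $\mathcal{L}_{\lambda}$ directly matches the stated constant and fits more naturally into the surrounding analysis.
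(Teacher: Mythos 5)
Your proof is correct, but it takes a genuinely different (and in a sense dual) route from the paper's. The paper applies strong convexity of $g(x,\cdot)$ itself: it uses the value-gap form of $\mu_g$-strong convexity at the point $y^*_\lambda(x)$ (with minimizer $y^*(x)$), obtaining $\|y^* - y^*_\lambda\| \le \frac{2}{\mu_g}\|\nabla_y g(x, y^*_\lambda)\|$, and then substitutes the optimality identity $\nabla_y g(x, y^*_\lambda) = -\lambda^{-1}\nabla_y f(x, y^*_\lambda)$ to bring in the $\lambda^{-1}$ factor. You instead use strong monotonicity of $\nabla_y \mathcal{L}_\lambda$ (via Lemma~\ref{lem:L_strong_convex}, modulus $\lambda\mu_g/2$) at the point $y^*(x)$ (with minimizer $y^*_\lambda(x)$), where $\nabla_y\mathcal{L}_\lambda(x,y^*) = \nabla_y f(x,y^*)$. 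The two choices swap which function supplies strong convexity and which point supplies the small ``perturbation'' gradient, and the bookkeeping happens to cancel: the paper picks up a factor of $2$ from the value-gap estimate with modulus $\mu_g$, while you pick up the same factor from the halved modulus $\lambda\mu_g/2$, so both land on $\tfrac{2l_{f,0}}{\lambda\mu_g}$. Your closing remark — that the $g$-route can actually give the sharper $\tfrac{l_{f,0}}{\lambda\mu_g}$ — is accurate when one uses gradient monotonicity rather than the paper's value-gap inequality; the paper's constant is loose by a factor of $2$ for the same reason yours is, just at a different step. Neither looseness matters for the $O(\lambda^{-1})$ conclusion.
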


\begin{proof}
    Note that if a function $w\mapsto h(w)$ is $\mu$-strongly convex and is minimized at $w^{*}$, then for any $w$, 
    \begin{align}
        0 \ge h(w^{*}) - h(w) \ge \langle \nabla h(w),\, w^{*}-w \rangle + \frac{\mu}{2} \lVert w^{*}-w\rVert^{2}. 
    \end{align}
    Using Cauchy-Schwarz inequality, the above inequality implies 
      \begin{align}
            \lVert w^{*} - w \rVert \le \frac{2}{\mu} \lVert \nabla  h(w) \rVert. 
      \end{align}
    Apply the above inequality for $h(y)=g(x,y)$ and the first-order optimality condition for $y_{\lambda}^{*}(x)$ in \eqref{eq:1st_order_opt_y_lambda}
    to get 
  \begin{align}
        \lVert  y^{*}(x) - y^{*}_{\lambda}(x) \rVert \le \frac{2}{\mu_{g}} \lVert \nabla_{y} g(x,y^{*}_{\lambda}(x)) \rVert = \frac{2}{\lambda \mu_{g} } \lVert \nabla_{y} f(x,y^{*}_{\lambda}(x)) \rVert \le \frac{2 l_{f,0}}{\lambda \mu_{g}},
  \end{align}
  as desired. 
\end{proof}

\begin{lemma}\label{lem:hyper_error}
    For $\lambda\ge 2l_{f,1}/\mu_{g}$, it holds that 
    \begin{align}
        |\mathcal{L}_{\lambda}^{*}(x) - F(x)| \le D_{0} \lambda^{-1} = O(\lambda^{-1}),
    \end{align}
where $D_{0}:=\left( l_{f,1} + \frac{l_{f,1}^{2}}{\mu_{g}}   \right) \frac{l_{f,1}}{\mu_{g}}$.
\end{lemma}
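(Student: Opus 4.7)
The plan is to exploit the identity $F(x) = \mathcal{L}_{\lambda}(x, y^{*}(x))$, which holds because the penalty term $g(x, y^{*}(x)) - g(x, y^{*}(x))$ vanishes. This gives the clean representation
\begin{align*}
F(x) - \mathcal{L}_{\lambda}^{*}(x) = \mathcal{L}_{\lambda}(x, y^{*}(x)) - \mathcal{L}_{\lambda}(x, y_{\lambda}^{*}(x)),
\end{align*}
which is already nonnegative by optimality of $y_{\lambda}^{*}(x)$ for $\mathcal{L}_{\lambda}(x, \cdot)$. So the approximation error has a definite sign and I only need an upper bound.

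To get the upper bound, I would use the smoothness of $\mathcal{L}_{\lambda}(x, \cdot)$ together with the first-order optimality $\nabla_{y} \mathcal{L}_{\lambda}(x, y_{\lambda}^{*}(x)) = 0$ from \eqref{eq:1st_order_opt_y_lambda}. Since $f(x, \cdot)$ is $l_{f,1}$-smooth and $g(x, \cdot)$ is $l_{g,1}$-smooth, $\mathcal{L}_{\lambda}(x, \cdot)$ is $(l_{f,1} + \lambda l_{g,1})$-smooth in $y$, so
\begin{align*}
\mathcal{L}_{\lambda}(x, y^{*}(x)) - \mathcal{L}_{\lambda}(x, y_{\lambda}^{*}(x)) \le \frac{l_{f,1} + \lambda l_{g,1}}{2} \lVert y^{*}(x) - y_{\lambda}^{*}(x) \rVert^{2}.
\end{align*}

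The final step is to invoke Lemma~\ref{lem:y_bd} to bound $\lVert y^{*}(x) - y_{\lambda}^{*}(x) \rVert \le \frac{2 l_{f,0}}{\lambda \mu_{g}}$, which yields
\begin{align*}
F(x) - \mathcal{L}_{\lambda}^{*}(x) \le \frac{l_{f,1} + \lambda l_{g,1}}{2} \cdot \frac{4 l_{f,0}^{2}}{\lambda^{2} \mu_{g}^{2}} = \frac{2 l_{f,0}^{2} l_{g,1}}{\lambda \mu_{g}^{2}} + \frac{2 l_{f,0}^{2} l_{f,1}}{\lambda^{2} \mu_{g}^{2}}.
\end{align*}
Since $\lambda \ge 2 l_{f,1}/\mu_{g}$, the second term is dominated by the first (up to a constant), so the whole bound collapses to $O(\lambda^{-1})$, matching the claimed $D_{0} \lambda^{-1}$ form after collecting constants into $D_{0}$.

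This is essentially a one-step calculation; there is no real obstacle. The only subtlety is remembering to use the identity $F(x) = \mathcal{L}_{\lambda}(x, y^{*}(x))$ at the very start, which automatically removes absolute values and lets the smoothness-of-$\mathcal{L}_{\lambda}$ argument go through without needing to bound $f$ and $g$ differences separately. (The constant $D_{0}$ as written in the statement appears to contain a typo where $l_{f,0}$ should appear in place of some factors of $l_{f,1}$, but the order-$\lambda^{-1}$ conclusion is unaffected.)
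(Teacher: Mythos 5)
Your proof is correct and, importantly, it is self-contained, whereas the paper's ``proof'' of this lemma is only a pointer to Lemma B.3 of an external reference (Chen et al., 2023). So there is no internal argument in the paper to compare against; what you have written is effectively the missing derivation.

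Your route is clean: the observation that $F(x) = \mathcal{L}_\lambda(x,y^*(x))$ (because the penalty term vanishes at $y^*(x)$) turns the problem into bounding $\mathcal{L}_\lambda(x,y^*(x)) - \mathcal{L}_\lambda(x,y_\lambda^*(x))$, which is automatically nonnegative by optimality of $y_\lambda^*(x)$; then the $(l_{f,1}+\lambda l_{g,1})$-smoothness of $\mathcal{L}_\lambda(x,\cdot)$ combined with the vanishing gradient at $y_\lambda^*(x)$ and the $O(\lambda^{-1})$ distance bound from Lemma~\ref{lem:y_bd} finishes it in one step. An equally elementary variant would be to decompose $F(x) - \mathcal{L}_\lambda^*(x) = \big(f(x,y^*(x)) - f(x,y_\lambda^*(x))\big) - \lambda\big(g(x,y_\lambda^*(x)) - g^*(x)\big)$ and bound the first piece by $l_{f,0}\|y^*-y_\lambda^*\|$ and the second by $\frac{\lambda l_{g,1}}{2}\|y^*-y_\lambda^*\|^2$, but the aggregate smoothness argument you use is no worse and a bit tidier.

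You are also right to flag the constant. Your derivation yields a prefactor of order $\frac{l_{f,0}^2}{\mu_g}\bigl(1 + \frac{2 l_{g,1}}{\mu_g}\bigr)$ once the $\lambda \ge 2 l_{f,1}/\mu_g$ hypothesis is used to absorb the $\lambda^{-2}$ term, whereas the lemma statement's $D_0 = \bigl(l_{f,1} + \frac{l_{f,1}^2}{\mu_g}\bigr)\frac{l_{f,1}}{\mu_g}$ contains neither $l_{f,0}$ nor $l_{g,1}$. Since $\|y^*(x)-y_\lambda^*(x)\|$ is controlled by $l_{f,0}$ (via Lemma~\ref{lem:y_bd}) and the penalty term's curvature is controlled by $l_{g,1}$, both must appear in any correct constant; the stated $D_0$ is almost certainly a transcription slip when importing the constant from the cited source. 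The $O(\lambda^{-1})$ order --- which is all the paper uses downstream --- is unaffected.
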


\begin{proof}
    See Lemma B.3 in \cite{chen2023near}. 
\end{proof}

\end{appendices}

\end{document}